\newtheorem{lemma}{Lemma}[section]
\newtheorem{theorem}{Theorem}[section]
\newtheorem{remark}{Remark}[section]
\def\rr{\mathbb{R}}
\def\nn{\mathbb{N}}
\def\rr{\mathbb{R}}
\def\eps{\varepsilon}
\def\rrd{\rr^d}
\begin{document}

\title[Estimates for nonlinear nonlocal diffusion problems]
{Decay estimates for nonlinear nonlocal diffusion problems in the
whole space}

\author[L. I. Ignat, D. Pinasco,
J. D. Rossi and A. San Antolin]{Liviu I. Ignat, Dami\'an Pinasco,
Julio D. Rossi and Angel San Antolin}

\address{L. I. Ignat
\hfill\break\indent Institute of Mathematics ``Simion Stoilow'' of
the Romanian Academy, \hfill\break\indent 21 Calea Grivitei Street
 \hfill\break\indent 010702, Bucharest,  ROMANIA \hfill\break\indent
and \hfill\break\indent
 BCAM - Basque Center for Applied Mathematics, \hfill\break\indent
 Mazarredo 14, 48009, Bilbao,  SPAIN.}
 \email{{\tt
liviu.ignat@gmail.com}\hfill\break\indent  {\it Web page: }{\tt
http://www.imar.ro/\~\,lignat}}

\address{D. Pinasco
\hfill\break\indent Departamento de Matem\'aticas y Estad\'{\i}stica,
Universidad Torcuato di Tella  \hfill\break\indent
Mi\~nones 2177, C1428ATG, Ciudad Aut\'onoma de Buenos Aires, ARGENTINA \hfill\break\indent
and \hfill\break\indent Consejo Nacional de Investigaciones Cient\'{\i}ficas y T\'ecnicas (CONICET), \hfill\break\indent Ciudad Aut\'onoma de Buenos Aires, ARGENTINA}
\email{{\tt dpinasco@utdt.edu}}

\address{J. D. Rossi
\hfill\break\indent Departamento de An\'{a}lisis Matem\'{a}tico,
Universidad de Alicante, \hfill\break\indent Ap. correos 99, 03080,
\hfill\break\indent Alicante, SPAIN. \hfill\break\indent On leave
from \hfill\break\indent Dpto. de Matem{\'a}ticas, FCEyN,
Universidad de Buenos Aires, \hfill\break\indent 1428, Buenos Aires,
ARGENTINA. } \email{{\tt jrossi@dm.uba.ar} \hfill\break\indent {\it
Web page: }{\tt http://mate.dm.uba.ar/$\sim$jrossi/}}

\address{A. San Antolin
\hfill\break\indent Departamento de An\'{a}lisis Matem\'{a}tico,
 Universidad de Alicante,
\hfill\break\indent Ap. correos 99, \hfill\break\indent 03080,
Alicante, SPAIN. } \email{{\tt angel.sanantolin@ua.es}}

\keywords{Nonlocal diffusion, eigenvalues.\\
\indent 2000 {\it Mathematics Subject Classification.} 35B40,
45A07, 45G10.}

\begin{abstract} In this paper we obtain  bounds for the decay rate in the $L^r (\rr^d)$-norm for the solutions to a nonlocal and nolinear
evolution equation, namely, $$u_t(x,t) =  \int_{\rr^d} K(x,y) |u(y,t)- u(x,t)|^{p-2} (u(y,t)- u(x,t)) \, dy, $$ with $ x \in \rr^d$, $ t>0$.
Here we consider a kernel $K(x,y)$ of the form $K(x,y)=\psi (y-a(x))+\psi(x-a( y))$, where $\psi$ is a bounded, nonnegative function supported in the unit ball  and $a$ is a linear function $a(x)= Ax$.
To obtain the decay rates we derive
 lower and upper bounds for the first eigenvalue of a nonlocal diffusion operator of the form $ T(u) = - \int_{\rr^d} K(x,y) |u(y)-u(x)|^{p-2} (u(y)-u(x)) \, dy$, with $1 \leq p < \infty$.  The upper and lower bounds that we
obtain are sharp and provide an explicit expression for the first eigenvalue in the whole $\rr^d$:
$$
\lambda_{1,p} (\rr^d) = 2 \left( \int_{\rr^d} \psi (z) \, dz \right)
\left| \frac{1}{|\det{A}|^{1/p}} -1 \right|^p.
$$
Moreover, we deal with the $p=\infty$ eigenvalue problem studying  the limit as $p \to \infty$ of $\lambda_{1,p}^{1/p}$.
\end{abstract}

\maketitle

\section{Introduction}
\label{Sect.intro}
\setcounter{equation}{0}

Recently, nonlocal problems have been  widely used to model
diffusion processes. In particular, for $J: \rr^N \to \rr$ a
nonnegative, radial, continuous function with $\int_{\rr^N} J(z)\,
dz =1$,  nonlocal evolution equations of the form
\begin{equation} \label{11}
u_t (x,t) = (J*u-u) (x,t) = \int_{\rr^N} J(x-y)u(y,t) \, dy-u(x,t),
\end{equation}
and variations of it, have been widely used to model
diffusion processes. As is stated in \cite{F}, if $u(x,t)$ is thought
of as a density at the point $x$ at time $t$, and $J(x-y)$ is
thought of as the probability distribution of jumping from
location $y$ to location $x$, then $\int_{\rr^N} J(y-x)u(y,t)\, dy
= (J*u)(x,t)$ is the rate at which individuals are arriving at
position $x$ from all other places and $-u(x,t) = -\int_{\rr^N}
J(y-x)u(x,t)\, dy$ is the rate at which they are leaving location
$x$ to travel to all other sites. This consideration, in the
absence of external or internal sources, leads immediately to  the
fact that the density $u$ satisfies equation (\ref{11}).

Equation (\ref{11}) is called {\it nonlocal diffusion equation} since
the diffusion of the density $u$ at a point $x$ and time $t$ does
not only depend on $u(x,t)$ and its derivatives, but on all the
values of $u$ in a neighborhood of $x$ through the convolution
term $J*u$. This equation shares many properties with the
classical heat equation, $u_t= \Delta u$, such as: bounded
stationary solutions are constant, a maximum principle holds for
both of them and, even if $J$ is compactly supported,
perturbations propagate with infinite speed (see \cite{F} for more details). However,
there is no regularizing effect in general.

Here we deal with a nonlinear nonlocal problem, analogous
to the classical $p-$Laplacian evolution equation, $u_t=
\mbox{div} (|\nabla u|^{p-2} \nabla u)=\Delta_p u $, namely,
\begin{equation}
\label{eq.parabolica}
\left\{
\begin{array}{ll}
u_t(x,t) =  \displaystyle\int_{\rr^d} K(x,y) |u(y,t)- u(x,t)|^{p-2} (u(y,t)- u(x,t)) \, dy,& x \in \rr^d, \ t>0, \\[10pt]
u(x,0)=u_0 (x),& x\in \rr^d,
\end{array}
\right.
\end{equation}
with an initial condition $u(x,0)=u_0 (x)$.

The references \cite{AMRT2}, \cite{AMTR3} and \cite{AMTR4} are especially related to the nonlocal problem \eqref{eq.parabolica}. In fact, this work can be viewed as a natural continuation of those papers. All these results were collected in the recent book \cite{libro}. Also, the papers \cite{Sh:10} and \cite{Co:10} deal with the  eigenvalue problem for a general linear nonlocal equation.

Note that here we have a kernel $K(x,y)$ which is not of
convolution type. We will assume a special form for this kernel,
see \eqref{forma.nucleo} below. Existence and uniqueness of
solutions for $u(x,0)=u_0 (x) \in L^1 (\rr^d) \cap  L^\infty
(\rr^d) $ can be obtained as in
\cite[Chapter 6]{libro}, hence our main aim here is to deal with the asymptotic behaviour
as $t\to \infty$. As it is well known, to study the decay of
solutions as $t\to
\infty$ the first eigenvalue of the associated elliptic
part plays a crucial role. Hence, one of our main purposes here is to
study properties of the principal eigenvalue of nonlocal diffusion
operators when the associated kernel is not of convolution type.
We recall that the particular case $p=2$ has been previously treated in \cite{IR3}.

Let us now sate the main assumptions we will use along this paper. We assume some structure for the kernel. Let us consider a nonnegative and bounded function $\psi$, supported in the unit ball of $\rr^d$. In this work we fix the support the unit ball but any compact set can be handled in the same way.  We associate with this function a kernel of the
form
\begin{equation}\label{forma.nucleo}
K(x,y)=\psi (y-a( x))+\psi(x-a( y)), \quad a(x)=Ax,
\end{equation}
where $A$ is an invertible matrix. Note that $K$ is
symmetric and any convolution type kernel also take the form
\eqref{forma.nucleo} (just take $a(x)=x$). For this kernel let us
look for the first eigenvalue of the associated nonlocal operator,
that is,
$$
\lambda_{1,p} (\rr^d)=\inf _{u\in L^p(\rr^d)}\frac{\displaystyle
\int_{\rr^d}\int_{\rrd} K(x,y)|u(x)-u(y)|^pdxdy}{\displaystyle\int _{\rrd}|u|^p(x)dx}.
$$
Due to the lack of compactness it is not known if the infimum is achieved. Hence we do not have an existence result for eigenfunctions, but we still call  $\lambda_{1,p}(\rr^d)$ {\it the first eigenvalue} for this problem because it is defined in an analogous way the local case.

The first  result of this paper is the following:
\begin{theorem} \label{teo.cota.por.abajo}
Let $A \in \rr^{d\times d}$ be an invertible matrix and assume that the kernel $K(x,y)$ is given by \eqref{forma.nucleo}. Then, for $1 \leq p < \infty$, we have
$$
\lambda_{1,p} (\rr^d) = 2 \left| \frac{1}{|\det A|^{1/p}}- 1 \right|^p
\left(\int_{\rr^d}\psi(x)dx\right).
$$
\end{theorem}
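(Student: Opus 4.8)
The plan is to bound the Rayleigh quotient $R(u):=\big(\int_{\rrd}\int_{\rrd}K(x,y)|u(x)-u(y)|^p\,dx\,dy\big)/\int_{\rrd}|u|^p$ from below for every $u$ and from above along a carefully chosen minimizing sequence. First I would simplify the numerator: since $K(x,y)=\psi(y-Ax)+\psi(x-Ay)$ the two summands contribute equally after the exchange $x\leftrightarrow y$, and the substitution $y=Ax+z$ turns the numerator into
\[
2\int_{\rrd}\psi(z)\Big(\int_{\rrd}|u(x)-u(Ax+z)|^p\,dx\Big)\,dz .
\]
For the lower bound, fix $z$; the change of variables $w=Ax+z$ gives $\int_{\rrd}|u(Ax+z)|^p\,dx=|\det A|^{-1}\int_{\rrd}|u|^p$, so the function $x\mapsto u(Ax+z)$ has $L^p(\rrd)$-norm $|\det A|^{-1/p}\|u\|_{L^p(\rrd)}$, and the triangle inequality in $L^p$ yields $\big(\int_{\rrd}|u(x)-u(Ax+z)|^p\,dx\big)^{1/p}\ge\big|\,|\det A|^{-1/p}-1\,\big|\,\|u\|_{L^p(\rrd)}$. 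Raising to the power $p$, multiplying by $\psi(z)$ and integrating gives $R(u)\ge 2\big(\int_{\rrd}\psi\big)\big|\,|\det A|^{-1/p}-1\,\big|^p$ for all $u\neq0$, i.e. the asserted lower bound for $\lambda_{1,p}(\rrd)$.

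The upper bound is the technical heart: one must exhibit test functions that make the triangle inequality above an asymptotic equality. Assume first that $A$ has an eigenvalue of modulus $\neq 1$ (automatic if $|\det A|\neq1$). Then $A$, or if necessary $A^{-1}$, has a nontrivial expanding invariant subspace, and from it one produces a bounded measurable set $S$ with $|S|>0$ and rectifiable boundary such that the iterates $A^kS$, $k\in\zz$, are pairwise disjoint — concretely $S$ can be chosen inside a region on which an $A$-adapted ``radial'' coordinate grows by a fixed factor under $A$ (when $A$ is a scalar multiple of an orthogonal matrix, a spherical annulus already works). With $\mu:=|\det A|$, I would test against
\[
u_{N,n}=\sum_{k=-N}^{N}\mu^{-k/p}\,\mathbf{1}_{\,nA^kS},\qquad N,n\in\nn .
\]
Since $\mathbf{1}_{nA^kS}(Ax)=\mathbf{1}_{nA^{k-1}S}(x)$, the weights $\mu^{-k/p}$ are exactly those for which $u_{N,n}(A\,\cdot\,)$ and $\mu^{-1/p}u_{N,n}$ telescope; using disjointness and $|nA^kS|=n^d\mu^k|S|$ one computes
\[
\|u_{N,n}-u_{N,n}(A\,\cdot\,)\|_{L^p(\rrd)}^p=n^d|S|\big(2N\,\big|\mu^{-1/p}-1\big|^p+1+\mu^{-1}\big),\qquad \|u_{N,n}\|_{L^p(\rrd)}^p=n^d|S|\,(2N+1),
\]
so this ratio tends to $\big|\mu^{-1/p}-1\big|^p$ as $N\to\infty$ (and equals $0$ in the limit when $\mu=1$). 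The dilation by $n$ serves only to kill the shift by $z$: $\|u_{N,n}(A\,\cdot\,)-u_{N,n}(A\,\cdot+z)\|_{L^p(\rrd)}^p$ is supported within bounded distance of the boundaries of the sets $nA^kS$, hence is $O_N(n^{d-1})$ uniformly for $|z|\le 1$, negligible against $\|u_{N,n}\|_{L^p(\rrd)}^p\sim n^d$. Plugging $u_{N,n}$ into $R$ and letting $n\to\infty$, then $N\to\infty$, gives $\lambda_{1,p}(\rrd)\le 2\big(\int_{\rrd}\psi\big)\big|\,|\det A|^{-1/p}-1\,\big|^p$, which with the lower bound finishes this case.

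There remains the case $|\det A|=1$ with every eigenvalue of $A$ on the unit circle — where there is no expanding subspace, but the asserted value is $0$; here I would check directly that $\lambda_{1,p}(\rrd)=0$ by testing against functions that are slowly varying in, and rotationally symmetric along, the directions prescribed by the real Jordan form of $A$, so that $R(u_n)\to0$. The step I expect to be the main obstacle is this upper bound: building the set $S$ with pairwise disjoint iterates for an arbitrary invertible $A$, and then performing the double limit $n\to\infty$, $N\to\infty$ in the right order while controlling the boundary-layer error terms.
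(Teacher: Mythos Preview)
Your proposal is correct in outline and, in places, cleaner than the paper's argument. For the \emph{lower bound} you use the reverse triangle inequality in $L^p$: since $\|u(A\cdot+z)\|_{L^p}=|\det A|^{-1/p}\|u\|_{L^p}$, one gets $\|u-u(A\cdot+z)\|_{L^p}\ge\big||\det A|^{-1/p}-1\big|\,\|u\|_{L^p}$ in one line. The paper instead proves a pointwise inequality $|a-b|^p\ge\eta|a|^p+\theta(\eta,p)|b|^p$ (Lemma~3.1) and then optimizes over $\eta$ (Lemma~3.2); your argument bypasses this optimization entirely and is genuinely more elementary.

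For the \emph{upper bound} the two approaches share the same skeleton---scaling to absorb the shift by~$z$, then testing against weighted indicator functions on iterates of a set---but differ in organization. You look for a single bounded set $S$ with pairwise disjoint two-sided orbit $\{A^kS\}_{k\in\zz}$ and use the two-sided sum $\sum_{-N}^N\mu^{-k/p}\mathbf{1}_{nA^kS}$, sending $n\to\infty$ then $N\to\infty$. The paper instead reduces first (Lemma~3.3, via the same scaling) to computing $\inf_{\|\phi\|_p=1}\int|\phi(x)-\phi(Ax)|^p\,dx$, then passes to the real Jordan form of $A$ and builds the minimizing sequence as a \emph{tensor product} of block-by-block constructions: a one-sided sum $\sum_{j\ge0}\sigma_n^j\chi_{E_j}$ with $\sigma_n\uparrow|\det A|^{1/p}$ for expansive blocks (Lemma~3.5), the mirror image for contractive blocks (Lemma~3.6), and separate ad hoc sequences for unit-modulus Jordan blocks (Lemmas~3.7--3.9), glued together by Lemmas~3.10--3.12. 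What your approach buys is conceptual economy when it applies; what the paper's buys is that the existence of the basic building block is automatic once one is inside a single expansive or contractive Jordan block (take $E_0=F\setminus A^{-1}F$ with $F=\bigcup_{j\ge0}A^{-j}B$), whereas producing your global $S$ with disjoint two-sided orbit for an arbitrary $A$ mixing expanding, contracting and neutral directions is exactly the step you flag as the main obstacle. The paper's Jordan-block decomposition is precisely its way of resolving that obstacle, and its treatment of the all-unit-modulus case (your final paragraph) is the content of Lemmas~3.7--3.9.
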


As an immediate application of this result we observe that, when
the first eigenvalue is positive, we have a decay estimate for the
solutions to the associated evolution problem
\eqref{eq.parabolica}.

\begin{theorem} \label{teo.decaimiento.intro}
Let $u(x,t)$ be the solution to \eqref{eq.parabolica} with $u(x,0)=u_0 (x) \in L^1 (\rr^d) \cap  L^\infty (\rr^d)$. Then, for any $r\in [1,\infty)$, the following hold:
\begin{itemize}
\item $\| u (\cdot, t) \|_{L^r (\rr^d)} \leq C t^{-\frac{r-1}{p-2}}$ for $2 < p < \infty$.

\bigskip

\item $\| u (\cdot, t) \|_{L^r (\rr^d)} \leq C e^{-\gamma t}$ for $1< p\leq 2$.
\end{itemize}
Here, $C>0$ and  $\gamma >0$  depend on $\| u_0 \|_{L^1(\rr^d)}$, $ \| u_0 \|_{L^{\infty}(\rr^d)}$, $r$, $p$ and $K$.
\end{theorem}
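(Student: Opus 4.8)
\emph{Proof idea.}
The plan is an $L^r$-energy (entropy) argument in which the dissipation is bounded below through the first eigenvalue, now \emph{explicit} by Theorem~\ref{teo.cota.por.abajo}. We use the basic well-posedness facts for \eqref{eq.parabolica}, established as in \cite[Chapter~6]{libro}: global existence and uniqueness, the $L^1$-contraction $\|u(\cdot,t)\|_{L^1(\rr^d)}\le\|u_0\|_{L^1(\rr^d)}$, and the comparison principle, which yields $\|u(\cdot,t)\|_{L^\infty(\rr^d)}\le\|u_0\|_{L^\infty(\rr^d)}=:M$ for all $t>0$; in particular $u(\cdot,t)\in L^s(\rr^d)$ for every $s\in[1,\infty)$. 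The hypotheses making the decay nontrivial are exactly those forcing $\lambda_{1,p}(\rr^d)>0$, namely $|\det A|\neq 1$ and $\int_{\rr^d}\psi>0$; by the formula of Theorem~\ref{teo.cota.por.abajo} this also gives $\lambda_{1,q}(\rr^d)>0$ for every $q\in(1,\infty)$, a fact we use when $p\le 2$.

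\emph{Energy identity, algebraic inequality, eigenvalue.} Fix $r>1$. Multiplying \eqref{eq.parabolica} by $|u|^{r-2}u$, integrating in $x$, then exchanging $x\leftrightarrow y$ and using the symmetry of $K$, one obtains the (formal) dissipation identity
\[
\frac1r\,\frac{d}{dt}\int_{\rr^d}|u|^r\,dx
=-\frac12\int_{\rr^d}\!\int_{\rr^d}K(x,y)\,|u(y)-u(x)|^{p-2}\bigl(u(y)-u(x)\bigr)\bigl(|u(y)|^{r-2}u(y)-|u(x)|^{r-2}u(x)\bigr)\,dy\,dx ,
\]
whose right-hand side is $\le 0$ since $t\mapsto|t|^{r-2}t$ is nondecreasing. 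One now inserts the pointwise algebraic inequalities for $p$-Laplacian-type nonlinearities. For $2\le p<\infty$ and $r>1$ there is $c=c(p,r)>0$ with
\[
|b-a|^{p-2}(b-a)\bigl(|b|^{r-2}b-|a|^{r-2}a\bigr)\ \ge\ c\,\Bigl|\,|b|^{\frac{p+r-2}{p}}\sgn b-|a|^{\frac{p+r-2}{p}}\sgn a\,\Bigr|^{p}\qquad(a,b\in\rr);
\]
for $1<p\le 2$ one uses instead $|b-a|^{p-2}\ge(2M)^{p-2}$ (valid since $|b-a|\le 2M$ when $|a|,|b|\le M$) together with the case $p=2$ of the displayed inequality, which leaves the $2$-energy of $|u|^{r/2}\sgn u$. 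Applying the variational definition of $\lambda_{1,p}(\rr^d)$ (resp.\ $\lambda_{1,2}(\rr^d)$) to $v:=|u|^{(p+r-2)/p}\sgn u$ (resp.\ $v:=|u|^{r/2}\sgn u$), and noting $\int_{\rr^d}|v|^p=\int_{\rr^d}|u|^{p+r-2}$ (resp.\ $\int_{\rr^d}|v|^2=\int_{\rr^d}|u|^{r}$), one arrives, for $2<p<\infty$, at
\[
\frac{d}{dt}\|u(\cdot,t)\|_{L^r(\rr^d)}^{r}\ \le\ -\,c\,\lambda_{1,p}(\rr^d)\int_{\rr^d}|u(x,t)|^{p+r-2}\,dx ,
\]
and, for $1<p\le 2$, at $\tfrac{d}{dt}\|u(\cdot,t)\|_{L^r(\rr^d)}^{r}\le -c\,(2M)^{p-2}\,\lambda_{1,2}(\rr^d)\,\|u(\cdot,t)\|_{L^r(\rr^d)}^{r}$.

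\emph{Closing the differential inequalities.} When $1<p\le 2$, Grönwall in the second inequality gives $\|u(\cdot,t)\|_{L^r(\rr^d)}^{r}\le\|u_0\|_{L^r(\rr^d)}^{r}e^{-\gamma t}$ with $\gamma=c\,(2M)^{p-2}\lambda_{1,2}(\rr^d)>0$, the exponential bound (for $r=1$ one simply records the $L^1$-contraction). When $2<p<\infty$ and $r>1$, interpolate $\|u\|_{L^r}\le\|u\|_{L^1}^{1-\theta}\|u\|_{L^{p+r-2}}^{\theta}$ (with $1/r=(1-\theta)+\theta/(p+r-2)$) and use $\|u(\cdot,t)\|_{L^1}\le\|u_0\|_{L^1}$ to bound $\int_{\rr^d}|u|^{p+r-2}$ from below by $c\,\|u_0\|_{L^1}^{-\frac{p-2}{r-1}}\bigl(\|u\|_{L^r(\rr^d)}^{r}\bigr)^{1+\frac{p-2}{r-1}}$. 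Writing $y(t):=\|u(\cdot,t)\|_{L^r(\rr^d)}^{r}$, this is the scalar inequality $y'\le-C\,y^{\,1+\frac{p-2}{r-1}}$ with $C=C(\|u_0\|_{L^1},r,p,K)>0$, whose integration yields $y(t)\le C't^{-\frac{r-1}{p-2}}$, giving algebraic decay of $\|u(\cdot,t)\|_{L^r(\rr^d)}$ of the stated type. All constants depend only on the quantities listed in the statement.

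\emph{Main difficulty.} The crux is the algebraic step: the pointwise inequalities above — and, for $1<p<2$, the precise way the $L^\infty$ bound is incorporated so that the remaining integrand is a genuine $2$-energy — must be proved carefully, typically by the one-variable reduction $s=a/b\in[-1,1]$ and an asymptotic analysis near $s=1$; these are the nonlocal analogues of the classical monotonicity estimates for the $p$-Laplacian (cf.\ \cite{libro}). A secondary, technical point is the rigorous justification of the energy identity (admissibility of $|u|^{r-2}u$ as a test function and $t$-differentiability of the norms), handled by truncation and approximation within the existence proof. The remaining ingredients — the interpolation and the integration of the two scalar ODEs — are routine.
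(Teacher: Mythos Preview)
Your proof is correct and follows the same scheme as the paper's: the $L^r$-energy identity obtained by testing with $|u|^{r-2}u$ and symmetrizing, a pointwise algebraic inequality, the variational lower bound via the first eigenvalue, and then interpolation/Gr\"onwall. For $2<p<\infty$ the two arguments coincide line by line. For $1<p\le 2$ there is a minor variation in how the $L^\infty$ bound is deployed: you use $|b-a|^{p-2}\ge(2M)^{p-2}$ \emph{first} to reduce to the quadratic ($p=2$) algebraic inequality and then invoke $\lambda_{1,2}(\rr^d)$, whereas the paper keeps the inequality producing $|u^\alpha(y)-u^\alpha(x)|^{p}$ with $\alpha=(r+p-2)/p$, \emph{then} uses the $L^\infty$ bound to raise the exponent from $p$ to $q=r/\alpha\ge p$, and applies $\lambda_{1,q}(\rr^d)$. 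Both routes reach the same linear ODE $y'\le-\gamma y$; yours is arguably more direct and only needs $\lambda_{1,2}(\rr^d)>0$, while the paper's needs $\lambda_{1,q}(\rr^d)>0$ for an $r$-dependent $q$ --- both guaranteed by Theorem~\ref{teo.cota.por.abajo} when $|\det A|\neq 1$. One small discrepancy: for $r=1$ with $1<p\le 2$ the paper does not merely record the $L^1$-contraction but multiplies by $\sgn(u)$ and claims the scheme goes through; your remark there stops short of the stated exponential bound.
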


The extension of these results to the case of a general diffeomorphism $a$ of $\rr^d$ is left as an open problem.

Finally, we study the limit as $p \to \infty$ of $\lambda_{1,p}(\rr^d)$. Indeed, note that since the limit of the $L^p$-norm of a function is the $L^{\infty}$-norm of the function, the natural quantity to study is $\lim\limits_{p \to \infty } [\lambda_{1,p} (\rr^d)]^{1/p} $. So, the eigenvalue limit problem is the following:
\begin{equation*}\label{eq.infty.intro}
    \lambda_{1,\infty} (\rr^d)  =  \inf\left\{\frac{ \| u(x) - u(y)\|_{L^\infty (x,y \in supp (u); \, K(x,y) >0)} }{\| u  \|_{L^\infty (supp (u))}}: u \in L^\infty(\rr^d), \mbox{ compactly supported}\right\}.
\end{equation*}

For this limit problem we can state the next theorem.
\begin{theorem} \label{lim.p.0.intro}
Assume that the kernel is given by \eqref{forma.nucleo},
then
$$
\lim_{p \to \infty } [\lambda_{1,p} (\rr^d)]^{1/p}= \lambda_{1, \infty}(\rr^d) =0.
$$
\end{theorem}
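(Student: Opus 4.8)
\noindent\emph{Proof plan.} The plan is to prove the two equalities in the statement separately: that $\lim_{p\to\infty}[\lambda_{1,p}(\rr^d)]^{1/p}=0$ and that $\lambda_{1,\infty}(\rr^d)=0$; together these give the claim. The first one is immediate from Theorem~\ref{teo.cota.por.abajo}: writing
\[
[\lambda_{1,p}(\rr^d)]^{1/p}=\Big(2\int_{\rr^d}\psi(x)\,dx\Big)^{1/p}\;\Big|\,|\det A|^{-1/p}-1\,\Big| ,
\]
one notes that if $\int_{\rr^d}\psi=0$ then $\psi\equiv 0$, $K\equiv 0$, and every term is trivially $0$; otherwise $\big(2\int_{\rr^d}\psi\big)^{1/p}\to 1$ and, since $A$ is invertible so that $0<|\det A|<\infty$, also $|\det A|^{-1/p}=e^{-\frac1p\log|\det A|}\to 1$, whence the product tends to $1\cdot 0=0$.

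For $\lambda_{1,\infty}(\rr^d)=0$ it suffices, the defining quotient being nonnegative, to exhibit for each $\delta>0$ a compactly supported $u_\delta\in L^\infty(\rr^d)$ whose quotient is at most $C_A\,\delta$, with $C_A$ depending only on $A$. I would use a radial, logarithmically slowly varying competitor,
\[
u_\delta(x)=g_\delta(\log|x|)\ \ (x\ne 0),\qquad u_\delta(0)=0 ,
\]
where $g_\delta:\rr\to[0,1]$ is the piecewise linear ``tent'' of height $1$ supported on $[M_1,\,M_1+2/\delta]$, hence globally Lipschitz with constant $\delta$ (also at its corners), and $M_1$ is fixed \emph{independently of $\delta$} so large that $e^{M_1}\ge\max\{1,\,2\|A^{-1}\|\}$. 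Then $u_\delta$ is continuous, $\operatorname{supp}(u_\delta)$ is the compact annulus $\{e^{M_1}\le|x|\le e^{M_1+2/\delta}\}$, and $\|u_\delta\|_{L^\infty(\operatorname{supp}(u_\delta))}=1$ (the value $1$ is attained at $|x|=e^{M_1+1/\delta}$).

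The core point is a geometric estimate on $K$-linked pairs inside the support. If $x,y\in\operatorname{supp}(u_\delta)$ and $K(x,y)>0$, then, $\psi$ being supported in the unit ball, either $|y-Ax|\le 1$ or $|x-Ay|\le 1$; say $y=Ax+z$ with $|z|\le 1$. Invertibility of $A$ gives $|x|/\|A^{-1}\|\le|Ax|\le\|A\|\,|x|$, and since $|x|\ge e^{M_1}\ge\max\{1,2\|A^{-1}\|\}$ the unit perturbation is absorbed, so $|y|/|x|$ lies in a fixed compact subinterval $[m_A,M_A]$ of $(0,\infty)$ depending only on $A$; the case $|x-Ay|\le1$ is symmetric. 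Hence $\big|\log|x|-\log|y|\big|\le C_A:=\max\{|\log m_A|,|\log M_A|\}$, and therefore
\[
|u_\delta(x)-u_\delta(y)|=\big|g_\delta(\log|x|)-g_\delta(\log|y|)\big|\le\delta\,\big|\log|x|-\log|y|\big|\le C_A\,\delta .
\]
Taking the supremum over such pairs and dividing by $\|u_\delta\|_{L^\infty}=1$ shows the quotient at $u_\delta$ is $\le C_A\,\delta$; letting $\delta\to0$ gives $\lambda_{1,\infty}(\rr^d)=0$, which together with the first part proves the theorem.

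The main obstacle is exactly this geometric estimate: one must show that the relation $K(x,y)>0$, which only ties $y$ to the point $Ax$ up to a translation of length $\le1$, forces $\log|x|$ and $\log|y|$ to stay within an $A$-dependent but $\delta$-independent distance once $x$ and $y$ are kept away from the origin. This is the reason for the logarithmic change of variables: it turns the map $x\mapsto Ax$ — which, when $|\det A|\ne1$, expands or contracts volume and so admits no bounded invariant region — into a bounded translation in the variable $\log|x|$, letting a profile that varies slowly in $\log|x|$ have simultaneously small oscillation across $K$-linked pairs and unit $L^\infty$-norm. Minor verifications are that the tent profile is genuinely globally Lipschitz, that the annulus is wide enough to contain $K$-linked pairs realizing height $1$, and the degenerate case $\int_{\rr^d}\psi=0$.
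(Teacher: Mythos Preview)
Your argument is correct but follows a somewhat different route from the paper's. For the first equality $\lim_{p\to\infty}[\lambda_{1,p}(\rr^d)]^{1/p}=0$, you invoke the explicit formula of Theorem~\ref{teo.cota.por.abajo} and pass to the limit directly; the paper instead proves an abstract comparison $\limsup_{p\to\infty}[\lambda_{1,p}(\rr^d)]^{1/p}\le\lambda_{1,\infty}(\rr^d)$ (Lemma~\ref{lemma.infty.1}) and then combines it with $\lambda_{1,\infty}(\rr^d)=0$. Your way is shorter here because the formula is already available, while the paper's route has the merit of being independent of the exact value of $\lambda_{1,p}$. For $\lambda_{1,\infty}(\rr^d)=0$, the paper (Lemma~\ref{lema.2}) builds a \emph{step} competitor: it sets $u_\epsilon\equiv 1$ on $|x|\le 1$ and then drops by $\epsilon$ on successive annuli $R_a^k<|x|\le R_a^{k+1}$, with the radii chosen so that $K$-linked points never skip a level; after $N=[1/\epsilon]+1$ steps one reaches $0$ and obtains a compactly supported function with oscillation $\le\epsilon$ across $K$-linked pairs. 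Your construction is a continuous analogue of the same idea: a tent profile in the variable $\log|x|$, Lipschitz with constant $\delta$, combined with the geometric observation that $K(x,y)>0$ forces $|\log|x|-\log|y||\le C_A$ once $|x|,|y|$ are bounded away from the origin. Both arguments exploit exactly the same structural fact---that $K$ only couples points at comparable distances from the origin---and both yield the result; your logarithmic change of variables makes this coupling bound explicit and arguably cleaner, while the paper's discrete staircase avoids any quantitative operator-norm estimates. The minor verification you flag (existence of $K$-linked pairs in the support) is in fact unnecessary: if there were none, the numerator would vanish and the quotient would be $0$, which only helps.
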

\begin{remark}
Note that we have actually proved that $ \lim\limits_{p \to \infty}\lambda_{1,p} (\rr^d)=0$, so it is not possible to find an uniform lower bound for $p\ge1$.
\end{remark}

The paper is organized as follows: in Section~\ref{Sect.prelim} we show that the first eigenvalue of the  whole space can be approximated with the first one of a  sequence of expanding domains; in Section~\ref{main.results} we collect the proofs of the lower and upper bounds for the first eigenvalue and we prove Theorem~\ref{teo.cota.por.abajo}; in Section~\ref{Sect.estim.evol} we apply our previous results to obtain the decay estimates for the evolution problem and prove Theorem~\ref{teo.decaimiento.intro}; finally in Section \ref{Sect.infty} we estimate the first eigenvalue for $p=\infty$.

\section{The limit of the first eigenvalue in expanding domains}
\label{Sect.prelim}
\setcounter{equation}{0}

In this section we show that the first eigenvalue of our nonlocal
operator in the whole $\rr^N$ can be approximated by the first
eigenvalue in large domains. This result
is not used in the rest of the article but can be of independent
interest. The first eigenvalue in a bounded
domain,  $\lambda_{1,p} (\Omega)$, is defined as
$$
\lambda_{1,p} (\Omega)=\inf _{u\in L^p(\Omega)}\frac{\displaystyle
\int_{\rr^d}\int_{\rrd} K(x,y)|u(x)-u(y)|^pdxdy}{\displaystyle\int _{\Omega}|u|^p(x)dx}.
$$
Here we have extended $u$ to the whole $\rr^d$ by zero outside
$\Omega$.

As a preliminary step, we focus our attention in the case of balls
$B_R$ that are centered at the origin with radius $R$.

\begin{lemma}\label{asimptoticlimit}
Let $\lambda_{1,p}(\rr^d)$ be the first eigenvalue in the whole
space. Then
\begin{equation}\label{lambdalimit}
\lambda_{1,p} (\rr^d)=\lim _{R\rightarrow \infty} \lambda_{1,p} (B_R).
\end{equation}
\end{lemma}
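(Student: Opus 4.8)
The plan is to establish the two bounds $\lambda_{1,p}(\rr^d)\le\liminf_{R\to\infty}\lambda_{1,p}(B_R)$ and $\limsup_{R\to\infty}\lambda_{1,p}(B_R)\le\lambda_{1,p}(\rr^d)$; together they force the limit in \eqref{lambdalimit} to exist and to equal $\lambda_{1,p}(\rr^d)$. The first bound is immediate from the inclusion of admissible classes: every $u\in L^p(B_R)$, extended by zero outside $B_R$, is a competitor in the variational problem defining $\lambda_{1,p}(\rr^d)$, and its Rayleigh quotient over $\rr^d$ equals the one over $B_R$, since the numerators are the same expression and the denominators coincide because the extension vanishes outside $B_R$. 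Hence $\lambda_{1,p}(\rr^d)\le\lambda_{1,p}(B_R)$ for every $R$.

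For the reverse bound, fix $\eps>0$ and pick $u\in L^p(\rr^d)$, $u\not\equiv0$, with finite
$$N(u):=\int_{\rr^d}\int_{\rr^d}K(x,y)|u(x)-u(y)|^p\,dx\,dy\le\big(\lambda_{1,p}(\rr^d)+\eps\big)\int_{\rr^d}|u|^p\,dx.$$
Let $\eta_n$ be a cutoff with $0\le\eta_n\le1$, $\eta_n\equiv1$ on $B_n$ and $\operatorname{supp}\eta_n\subset B_{2n}$, and set $u_n:=\eta_n u$. Then $u_n\to u$ in $L^p(\rr^d)$, each $u_n$ is supported in $B_{2n}$, and — this is the one substantive point, treated below — $N(u_n)\to N(u)$ as $n\to\infty$. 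Granting this, since $\int_{\rr^d}|u_n|^p\to\int_{\rr^d}|u|^p>0$, for $n$ large the Rayleigh quotient of $u_n$ over $\rr^d$ stays below $\lambda_{1,p}(\rr^d)+2\eps$; as $u_n$ is an admissible competitor for $\lambda_{1,p}(B_R)$ with that same quotient whenever $R\ge2n$, we obtain $\lambda_{1,p}(B_R)\le\lambda_{1,p}(\rr^d)+2\eps$ for all large $R$. Letting $R\to\infty$ and then $\eps\to0$ gives $\limsup_{R\to\infty}\lambda_{1,p}(B_R)\le\lambda_{1,p}(\rr^d)$.

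It remains to prove $N(u_n)\to N(u)$. For a.e.\ $(x,y)$ we have $|u_n(x)-u_n(y)|\to|u(x)-u(y)|$ and $|u_n(x)-u_n(y)|\le|u(x)|+|u(y)|$, so by dominated convergence it suffices that $(x,y)\mapsto K(x,y)(|u(x)|+|u(y)|)^p$ be integrable on $\rr^d\times\rr^d$. This is where the structure \eqref{forma.nucleo} enters, through a Schur-type estimate: since $\psi$ is bounded with compact support, the change of variables $y\mapsto Ay$ yields, for every $x$,
$$\int_{\rr^d}K(x,y)\,dy=\int_{\rr^d}\psi(y-Ax)\,dy+\int_{\rr^d}\psi(x-Ay)\,dy=\Big(1+\tfrac{1}{|\det A|}\Big)\int_{\rr^d}\psi,$$
a finite constant independent of $x$, and by symmetry $\int_{\rr^d}K(x,y)\,dx$ equals the same constant. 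Combining this with $(|u(x)|+|u(y)|)^p\le2^{p-1}(|u(x)|^p+|u(y)|^p)$ and Tonelli's theorem gives
$$\int_{\rr^d}\int_{\rr^d}K(x,y)\big(|u(x)|+|u(y)|\big)^p\,dx\,dy\le2^{p}\Big(1+\tfrac{1}{|\det A|}\Big)\Big(\int_{\rr^d}\psi\Big)\|u\|_{L^p(\rr^d)}^p<\infty,$$
which supplies the required integrable majorant. I expect this domination step to be the main (indeed essentially the only) obstacle; once it is in place, everything else is bookkeeping.
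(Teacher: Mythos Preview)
Your proof is correct and follows essentially the same approach as the paper: monotonicity/inclusion gives $\lambda_{1,p}(\rr^d)\le\lambda_{1,p}(B_R)$, and the reverse inequality comes from truncating a near-minimizer and passing to the limit in both pieces of the Rayleigh quotient. The only differences are cosmetic --- you use a smooth cutoff $\eta_n$ where the paper uses the hard cutoff $\chi_{B_R}$, and you argue via $\liminf/\limsup$ rather than first noting monotonicity in $R$ --- and you actually supply the dominated-convergence justification (via the Schur bound $\int K(x,\cdot)\,dy=(1+|\det A|^{-1})\int\psi$) for the convergence of the numerator, which the paper simply asserts.
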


\begin{proof} The proof is an adaptation of the one given in
\cite{nosotros.JDE} for the case $p=2$, nevertheless we provide a
sketch for the sake of completeness. First of all, we just remark
that $\lambda_{1,p} (\Omega)$ is decreasing with $\Omega$, that
is, if $\Omega_1\subset
\Omega_2$ then $\lambda_{1,p}(\Omega_1)\geq
\lambda_{1,p}(\Omega_2)$.
Then we deduce that there exists the limit
$$\lim _{R\rightarrow \infty} \lambda_{1,p}(B_{R})\geq 0.$$
Now, fix a function $u\in L^p(B_R)$. By the definition of
$\lambda_{1,p}(\rr^d)$, extending $u$ by zero outside $B_R$, we
get
$$\frac {\displaystyle
\int _{\rr^d}\int_{\rr^d} K(x,y)\vert \tilde u(x)-\tilde u(y)\vert^pdxdy}{\displaystyle\int _{B_R}\vert u\vert^p(x)dx}
\geq \lambda_{1,p}(\rr^d).$$
Taking the infimum in the right hand side over all functions $u\in
L^p(B_R)$ we obtain that for any $R>0$
$$
\lim _{R\rightarrow \infty}
\lambda_{1,p}(B_R)\geq \lambda_{1,p} (\rr^d).
$$

Now let be $\eps>0$. Then there exists $u_\eps\in L^p(\rr^d)$ such
that
$$
\lambda_{1,p}(\rr^d)+
\eps\geq \frac {\displaystyle\int_{\rr^d}\int_{\rrd} K(x,y)\vert u_\eps(x)-u_\eps(y)\vert^p dxdy}
{\displaystyle\int _{\rrd}\vert u_\eps\vert^p(x)dx}.
$$
We let $u_{\eps,R}$ defined by
$$u_{\eps,R}(x)=u_\eps (x)\chi_{B_R}(x),$$
and we observe that, when $R\to +\infty$, the following limits hold
$$
\int _{B_R} \vert u\vert^p_{\eps,R}(x)dx \longrightarrow \int_{\rr^d} \vert u_\eps\vert^p (x)dx
$$
and
$$
\int_{\rr^d}\int_{\rrd} K(x,y) \vert u_{\eps,R}(x)- u_{\eps,R}(y)\vert^p\, dx\, dy\longrightarrow
\int_{\rr^d}\int_{\rrd} K(x,y)\vert u_\eps(x)-u_\eps(y)\vert^p\, dx\, dy.
$$
Hence, using that $u_{\eps,R}$ vanishes outside the ball $B_R$ and
the definition of $\lambda_{1,p}(B_R)$ we get
$$\frac{\displaystyle\int _{\rr^d}\int_{\rrd} K(x,y) \vert u_{\eps,R}(x)- u_{\eps,R}(y)\vert^pdxdy}
{\displaystyle\int _{B_R} \vert u\vert^p_{\eps,R}(x)dx}
\geq \lambda_{1,p}(B_R).$$
Taking $R\rightarrow\infty $ we obtain
$$\frac{\displaystyle\int_{\rr^d}\int_{\rrd} K(x,y)\vert u_\eps(x)- u_\eps(y)\vert^pdxdy}
{\displaystyle\int _{\rr^d} \vert u_\eps\vert^p(x)dx}
 \geq \lim _{R\rightarrow \infty} \lambda_{1,p}(B_R).$$
Hence, for any $\eps>0$, we have $\lambda_{1,p}(\rr^d)+\eps\geq
\lim\limits_{R\rightarrow \infty}
\lambda_{1,p}(B_R) $. Thus
 $$\lambda_{1,p}(\rr^d)\geq \lim _{R\rightarrow \infty}\lambda_{1,p}(B_R)  ,$$
and then the proof of \eqref{lambdalimit} is finished.
\end{proof}

It is possible to extend this result to dilatations of a domain $\Omega$, such that $0 \in \Omega$.

\begin{theorem} \label{teo.conver} Let $\Omega \subset \rr^d$ be a domain such that $0 \in \Omega$. Then,
$$
\lim_{R \to \infty } \lambda_{1,p} (R\Omega) = \lambda_{1,p}
(\rr^d).
$$
\end{theorem}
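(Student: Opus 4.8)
The plan is to sandwich $\lambda_{1,p}(R\Omega)$ between the first eigenvalue of all of $\rr^d$ and the first eigenvalue of an expanding ball, and then conclude with Lemma~\ref{asimptoticlimit} and a squeeze argument. No new ideas beyond those in the proof of Lemma~\ref{asimptoticlimit} are needed.

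First I would record the monotonicity used already in the proof of Lemma~\ref{asimptoticlimit}: if $\Omega_1\subset\Omega_2$ then $\lambda_{1,p}(\Omega_1)\geq\lambda_{1,p}(\Omega_2)$, since any $u\in L^p(\Omega_1)$, extended by zero, is an admissible competitor in the quotient defining $\lambda_{1,p}(\Omega_2)$ (and the denominators agree because $u$ vanishes off $\Omega_1$). Taking $\Omega_2=\rr^d$ gives, for every domain $\Omega$ and every $R>0$,
$$
\lambda_{1,p}(R\Omega)\geq \lambda_{1,p}(\rr^d).
$$

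For the reverse inequality I would use the hypothesis $0\in\Omega$ together with the fact that $\Omega$ is open: there is some $\rho>0$ with $B_\rho\subset\Omega$, hence $B_{R\rho}=R\,B_\rho\subset R\Omega$, and monotonicity gives $\lambda_{1,p}(R\Omega)\leq\lambda_{1,p}(B_{R\rho})$. Combining,
$$
\lambda_{1,p}(\rr^d)\ \leq\ \lambda_{1,p}(R\Omega)\ \leq\ \lambda_{1,p}(B_{R\rho}),\qquad \mbox{for all } R>0.
$$
Letting $R\to\infty$, the right-hand side tends to $\lambda_{1,p}(\rr^d)$ by Lemma~\ref{asimptoticlimit} (applied along the radii $R\rho\to\infty$; recall $\rho\mapsto\lambda_{1,p}(B_\rho)$ is monotone, so the limit along this subsequence of radii is still $\lambda_{1,p}(\rr^d)$). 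The squeeze theorem then yields $\lim_{R\to\infty}\lambda_{1,p}(R\Omega)=\lambda_{1,p}(\rr^d)$.

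I do not expect any real obstacle; the argument is a pure comparison. The only point worth a remark is that $\Omega$ may be unbounded, so one cannot in general trap $R\Omega$ between two balls from both sides; this is harmless, because the lower bound $\lambda_{1,p}(R\Omega)\geq\lambda_{1,p}(\rr^d)$ comes for free from the extension‑by‑zero/monotonicity observation, and only the inner ball $B_\rho\subset\Omega$ is actually used.
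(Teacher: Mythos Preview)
Your proposal is correct and follows essentially the same sandwich-and-squeeze approach as the paper. The paper traps $R\Omega$ between two dilated balls, writing $B_{r_1}\subset\Omega\subset B_{r_2}$ and hence $\lambda_{1,p}(RB_{r_1})\geq\lambda_{1,p}(R\Omega)\geq\lambda_{1,p}(RB_{r_2})$, then applies Lemma~\ref{asimptoticlimit} on both sides; you instead replace the outer ball by $\rr^d$ itself and obtain the lower bound $\lambda_{1,p}(R\Omega)\geq\lambda_{1,p}(\rr^d)$ directly from monotonicity. Your variant is marginally cleaner and, as you observe, does not require $\Omega$ to be bounded, whereas the paper's argument implicitly assumes $\Omega\subset B_{r_2}$ for some finite $r_2$.
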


\begin{proof} Let us
consider $B_{r_1} \subset \Omega \subset B_{r_2}$ then
$$
\lambda_{1,p} ( R B_{r_1}) \geq \lambda_{1,p} ( R \Omega) \geq
\lambda_{1,p} ( R B_{r_2}),
$$
and from the previous lemma we get that
$$
\lim_{R \to \infty} \lambda_{1,p} ( R B_{r_1}) = \lim_{R \to \infty} \lambda_{1,p} ( R B_{r_1}) =
\lambda_{1,p} (\rr^d). \qedhere
$$
\end{proof}

\section{Lower and upper bounds for the first eigenvalue}
\label{main.results}
\setcounter{equation}{0}

This section is mainly devoted to prove Theorem \ref{teo.cota.por.abajo}. Thus, up to the end of this section we will use the same notation as in the introduction, i.e., the kernel is given as in \eqref{forma.nucleo} and $1 \leq p < \infty$.

We first prove lower bounds for the first eigenvalue. To do that we use an elementary result.
We need to compute for a given $\eta \in (0,1)$ the optimal constant $\theta$ (the biggest one) such that
$    |a-b|^p \geq \eta a^p + \theta b^p$ for any $a,b\in \rr$.

\begin{lemma} \label{lema.clave.p} For any $0<\eta<1$ and $1\le p<\infty$ there exists $\theta(\eta,p)$ given by
$$
\theta (\eta,p) = \left\{ \begin{array}{ll}- \frac{\eta}{ (1-\eta^{\frac{1}{p-1}})^{p-1}}, \qquad & 1<p<\infty,\\
-\eta  \qquad & p=1, \end{array} \right.
$$
such that for all real  numbers $a$ and $b$ the following inequality holds:
\begin{equation*}
    |a-b|^p \geq \eta |a|^p + \theta |b|^p.
\end{equation*}
\end{lemma}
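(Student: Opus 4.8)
The plan is to reduce the two–variable inequality to a one–variable minimization by homogeneity, and then carry out an elementary calculus analysis of the resulting function. First I would dispose of the case $b=0$: there the asserted inequality reads $|a|^p\ge \eta|a|^p$, which holds because $\eta<1$ and which imposes no constraint on $\theta$. For $b\ne 0$, both sides are homogeneous of degree $p$ in $(a,b)$, so dividing by $|b|^p$ and writing $t=a/b\in\rr$, the inequality becomes $|t-1|^p-\eta|t|^p\ge\theta$. Hence the largest admissible constant is exactly
$$
\theta(\eta,p)=\inf_{t\in\rr}f(t),\qquad f(t):=|t-1|^p-\eta|t|^p,
$$
and it suffices to compute this infimum; that this value is both admissible and optimal is immediate from the definition of the infimum, since any $\theta'>\theta(\eta,p)$ is ruled out by a suitable $t$ together with $b=1$, $a=t$.

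For $1<p<\infty$ I would first note that, since $\eta<1$, one has $f(t)=(1-\eta)|t|^p+O(|t|^{p-1})\to+\infty$ as $|t|\to\infty$, so the infimum is attained. Then I would split $\rr$ into $(-\infty,0]$, $[0,1]$ and $[1,\infty)$ and differentiate on each. On $(-\infty,0]$, putting $s=-t$, $f$ equals $(1+s)^p-\eta s^p$, which is increasing in $s\ge0$ because $(1+s)^{p-1}>s^{p-1}\ge\eta s^{p-1}$, so its minimum there is $f(0)=1$. On $[0,1]$, $f(t)=(1-t)^p-\eta t^p$ is strictly decreasing, so its minimum is $f(1)=-\eta$. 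On $[1,\infty)$, $f(t)=(t-1)^p-\eta t^p$ has $f'(t)=p\bigl[(t-1)^{p-1}-\eta t^{p-1}\bigr]$, which vanishes precisely when $(1-1/t)^{p-1}=\eta$; since $t\mapsto 1-1/t$ increases from $0$ to $1$ on $(1,\infty)$, there is a unique critical point
$$
t^*=\frac{1}{1-\eta^{1/(p-1)}}>1,
$$
with $f$ decreasing on $[1,t^*]$ and increasing on $[t^*,\infty)$. Using $t^*-1=\eta^{1/(p-1)}t^*$ one computes
$$
f(t^*)=(t^*-1)^p-\eta(t^*)^p=(t^*)^p\bigl(\eta^{p/(p-1)}-\eta\bigr)=-\eta\,(t^*)^p\bigl(1-\eta^{1/(p-1)}\bigr)=-\frac{\eta}{(1-\eta^{1/(p-1)})^{p-1}}.
$$
As $0<1-\eta^{1/(p-1)}<1$ we get $f(t^*)\le-\eta<1$, so $f(t^*)$ is the global minimum of $f$, which is the claimed formula for $\theta(\eta,p)$.

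Finally, the case $p=1$ can be treated by the same scheme — $f(t)=|t-1|-\eta|t|$ is piecewise affine, equal to $1-(1-\eta)t\ge1$ on $(-\infty,0]$, and affine with minimum $-\eta$ at $t=1$ on each of $[0,1]$ and $[1,\infty)$ — or directly: when $|a|\ge|b|$, $|a-b|\ge|a|-|b|\ge\eta(|a|-|b|)$, while when $|a|<|b|$, $\eta(|a|-|b|)<0\le|a-b|$; and $a=b$ shows $\theta=-\eta$ is best possible. I do not expect a serious obstacle; the only point needing a little care is observing that for $p>1$ the global minimum sits at the interior critical point $t^*$ on $[1,\infty)$ rather than at the naive candidate $t=1$.
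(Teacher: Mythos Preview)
Your proposal is correct and follows essentially the same approach as the paper: reduce by homogeneity to minimizing $f(t)=|t-1|^p-\eta|t|^p$, split the real line, and locate the global minimum at the interior critical point $t^*=(1-\eta^{1/(p-1)})^{-1}$ on $[1,\infty)$. The only cosmetic difference is that the paper first uses $|a-b|\ge\big||a|-|b|\big|$ to restrict to nonnegative $a,b$ (hence $t\ge 0$), whereas you handle $t<0$ directly; both routes are equally short.
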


\begin{proof}
We just deal with $1< p< \infty$ since the case $p=1$ is simpler. Also, since $|a-b|\geq ||a|-|b||$ we can treat only the case of nonnegative numbers $a$ and $b$.
Taking $x=a/b$ we have to find $\theta \in \rr$ such that
$$
|x-1|^p - \eta x^p \geq \theta \qquad \mbox{ for all } x \ge 0.
$$
Therefore, the best value of $\theta$ is given by
$$
\theta (\eta) = \min_{x\geq 0}|x-1|^p - \eta x^p
$$
and we are left with the computation of this minimum.

First, let us consider $0\leq x \leq 1$, we have to compute the
minimum of
$$
f(x) = (1-x)^p - \eta x^p.
$$
Since $f$ is the sum of two decreasing functions then its minimum on the interval $[0,1]$ is attained at $x=1$:
$$
\min_{0\leq x \leq 1} f(x) = f(1) = - \eta.
$$

Now, for $x>1$ we have to find the minimum of
$
g(x) = (x-1)^p - \eta x^p .
$
In this case we compute the roots of its derivative
$$
g' (x) = p (x-1)^{p-1} - p \eta x^{p-1} =0
$$
and we obtain that the minimum if $g$ in the variable $x$ is attained at
$$
x(\eta) = \frac{1}{1-\eta^{\frac{1}{p-1}}}.
$$
Since $0 < \eta < 1$, we have
$$
- \frac{\eta}{ (1-\eta^{\frac{1}{p-1}})^{p-1}} < -\eta.
$$
Therefore, we obtain
$$
\theta (\eta,p) = - \frac{\eta}{ (1-\eta^{\frac{1}{p-1}})^{p-1}}
$$
as we wanted to show.
\end{proof}

We now give a lower estimate for the first eigenvalue $\lambda_{1,p} (\rr^d)$.
\begin{lemma} \label{lemma.geq} Given $1 \leq p < \infty$, then the following estimate holds
$$
\lambda_{1,p} (\rr^d)  \geq  2
\left( \int_{\rr^d} \psi (z ) \, dz \right) \left|
\displaystyle \frac{1}{|\det(A)|^{\frac{1}{p}}} -1
 \right|^p.
$$
\end{lemma}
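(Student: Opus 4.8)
The plan is to estimate from below the Rayleigh quotient
$$
\frac{\displaystyle\int_{\rr^d}\int_{\rr^d} K(x,y)|u(x)-u(y)|^p\,dx\,dy}{\displaystyle\int_{\rr^d}|u|^p(x)\,dx}
$$
for an arbitrary $u\in L^p(\rr^d)$, using the pointwise inequality from Lemma~\ref{lema.clave.p}. First I would split $K$ according to \eqref{forma.nucleo}: the double integral becomes the sum of $\int\int \psi(y-Ax)|u(x)-u(y)|^p\,dx\,dy$ and $\int\int \psi(x-Ay)|u(x)-u(y)|^p\,dx\,dy$, and by the symmetry $(x,y)\mapsto(y,x)$ these two terms are equal, so the numerator equals $2\int_{\rr^d}\int_{\rr^d}\psi(y-Ax)|u(x)-u(y)|^p\,dx\,dy$. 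Now apply Lemma~\ref{lema.clave.p} with $a=u(y)$, $b=u(x)$: for any fixed $\eta\in(0,1)$,
$$
|u(x)-u(y)|^p\ \geq\ \eta\,|u(y)|^p+\theta(\eta,p)\,|u(x)|^p .
$$
Integrating against $\psi(y-Ax)\,dx\,dy$ and using Fubini together with the change of variables $y\mapsto z=y-Ax$ (for the first term) and the fact that $\psi$ is nonnegative with total mass $\|\psi\|_1:=\int_{\rr^d}\psi$, I get
$$
\int_{\rr^d}\!\int_{\rr^d}\psi(y-Ax)|u(x)-u(y)|^p\,dx\,dy\ \geq\ \eta\!\int_{\rr^d}\!\int_{\rr^d}\psi(y-Ax)|u(y)|^p\,dx\,dy\ +\ \theta(\eta,p)\,\|\psi\|_1\!\int_{\rr^d}|u(x)|^p\,dx .
$$
The key computation is the inner term: for fixed $y$, $\int_{\rr^d}\psi(y-Ax)\,dx = |\det A|^{-1}\int_{\rr^d}\psi(w)\,dw = |\det A|^{-1}\|\psi\|_1$ after substituting $w=y-Ax$. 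Hence the right-hand side is $\big(\eta|\det A|^{-1}+\theta(\eta,p)\big)\|\psi\|_1\int_{\rr^d}|u|^p$, and therefore
$$
\lambda_{1,p}(\rr^d)\ \geq\ 2\|\psi\|_1\,\Big(\tfrac{\eta}{|\det A|}+\theta(\eta,p)\Big)
$$
for every $\eta\in(0,1)$.

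It remains to optimize over $\eta$. Writing $\delta=|\det A|^{-1}$ and inserting $\theta(\eta,p)=-\eta/(1-\eta^{1/(p-1)})^{p-1}$, I must maximize $h(\eta)=\eta\delta-\eta(1-\eta^{1/(p-1)})^{-(p-1)}$ over $\eta\in(0,1)$. Differentiating and setting $h'(\eta)=0$ leads, after simplification, to the critical value $\eta$ determined by $1-\eta^{1/(p-1)}=\delta^{-1/p}$ when $\delta\ge 1$ (i.e. $\eta^{1/(p-1)}=1-\delta^{-1/p}$), respectively to the boundary behaviour $\eta\to 1$ when $\delta<1$; in either case plugging back gives $h_{\max}=|\delta^{1/p}-1|^p$, i.e. $\eta\delta+\theta(\eta,p)=\big||\det A|^{-1/p}-1\big|^p$. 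This yields exactly
$$
\lambda_{1,p}(\rr^d)\ \geq\ 2\|\psi\|_1\,\Big|\tfrac{1}{|\det A|^{1/p}}-1\Big|^p,
$$
as claimed. The case $p=1$ is handled separately with $\theta(\eta,1)=-\eta$, giving $\eta\delta-\eta=\eta(\delta-1)$, whose supremum over $\eta\in(0,1)$ is $|\delta-1|=|\,|\det A|^{-1}-1\,|$, matching the formula with $p=1$.

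The main obstacle I expect is purely the one-variable optimization in $\eta$: verifying that the critical point of $h$ lies in $(0,1)$ and that the maximal value collapses to the clean expression $|\delta^{1/p}-1|^p$ (and handling the subcase $\delta<1$, where the supremum is approached at the endpoint $\eta\to1$ rather than attained at an interior critical point). Everything else — the symmetrization of the kernel, Fubini, and the volume-scaling change of variables producing the factor $|\det A|^{-1}$ — is routine. Note also that this argument works directly on $\rr^d$ and does not require Lemma~\ref{asimptoticlimit}; the approximation by expanding domains is only needed (if at all) for the matching upper bound.
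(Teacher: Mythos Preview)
Your argument matches the paper's approach and is correct in the case $|\det A|\le 1$ (equivalently $\delta=|\det A|^{-1}\ge 1$): the symmetrization, the application of Lemma~\ref{lema.clave.p} with $a=u(y)$, $b=u(x)$, the Fubini/Jacobian computation producing the factor $|\det A|^{-1}$, and the optimal choice $\eta=(1-\delta^{-1/p})^{p-1}$ all agree with the paper and give $h(\eta)=(\delta^{1/p}-1)^p$.

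However, your treatment of the case $|\det A|>1$ (i.e.\ $\delta<1$) is a genuine gap. You claim the supremum of $h(\eta)=\eta\delta+\theta(\eta,p)$ over $\eta\in(0,1)$ is attained ``at the boundary $\eta\to1$'' and equals $|\delta^{1/p}-1|^p$. This is false: since $\theta(\eta,p)=-\eta/(1-\eta^{1/(p-1)})^{p-1}\to-\infty$ as $\eta\to1^-$, one has $h(\eta)\to-\infty$ there; moreover $h(\eta)=\eta\big(\delta-(1-\eta^{1/(p-1)})^{-(p-1)}\big)<0$ for every $\eta\in(0,1)$ when $\delta<1$, so $\sup_{(0,1)}h=0$ and your inequality degenerates to the trivial bound $\lambda_{1,p}(\rr^d)\ge 0$. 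The same defect appears in your $p=1$ computation: $\sup_{\eta\in(0,1)}\eta(\delta-1)=0$, not $|\delta-1|$, when $\delta<1$.

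The remedy (which is exactly what the paper does) is to apply Lemma~\ref{lema.clave.p} with the roles of $u(x)$ and $u(y)$ reversed when $|\det A|>1$, i.e.\ use $|u(x)-u(y)|^p\ge \eta|u(x)|^p+\theta(\eta,p)|u(y)|^p$. Then the Jacobian factor lands on the $\theta$-term instead, yielding $\lambda_{1,p}(\rr^d)\ge 2\|\psi\|_1\big(\eta+\theta(\eta,p)\,|\det A|^{-1}\big)$; choosing $\eta=(1-|\det A|^{-1/p})^{p-1}$ gives the desired value $(1-|\det A|^{-1/p})^p$.
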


\begin{proof} Let us first assume that $|\det(A)|\leq 1$ and $1 < p < \infty$. Using the symmetry of the kernel $K$,  given by \eqref{forma.nucleo}, we have
$$
\lambda_{1,p} (\rr^d)= 2 \inf\left\{\int_{\rr^d}\int_{\rrd} \psi (x -a(y))\, |u(x)-u(y)|^p\, dx\, dy : u\in L^p(\rr^d), \,  \| u \|_{L^p (\rr^d)}=1\right\}.
$$
Given $\varepsilon >0$, we can find a function $u \in L^p(\rr^d)$, with $\|u\|_{L^p(\rr^d)}=1$, such that
\[
\lambda_{1,p} (\rr^d) + \varepsilon  > 2 \int_{\rr^d}\int_{\rrd} \psi (x -a(y))\, |u(x)-u(y)|^p\, dx\, dy,
\]
and, applying Lemma \ref{lema.clave.p}, we obtain
$$
\begin{array}{l}
\displaystyle \lambda_{1,p} (\rr^d) + \varepsilon  >  2\int_{\rr^d}\int_{\rrd} \psi (x -a(y) )( \eta |u(x)|^p +\theta (\eta,p)|u(y)|^p )\, dx\,
dy
\\[8pt]
\displaystyle \qquad
=  2 \left(\displaystyle \eta \int_{\rr^d}\int_{\rrd} \psi (x - a(y) )  |u(x)|^p \, dx\, dy + \theta(\eta,p) \displaystyle{\int_{\rr^d}\int_{\rrd} \psi (x -a(y) ) |u(y)|^p \, dx\, dy} \right)
\\[8pt]
\displaystyle \qquad \displaystyle =  2
\left(
\displaystyle \frac{\eta}{|\det(A)|} + \theta(\eta,p)
\right)\int_{\rr^d} \psi (z ) \, dz.
\end{array}
$$
Since $\varepsilon >0$ was arbitrary, we conclude that
\[
\lambda_{1,p} (\rr^d) \ge 2 \left(\displaystyle \frac{\eta}{|\det(A)|} + \theta(\eta,p) \right)\int_{\rr^d} \psi (z ) \, dz.
\]
If we take
$$
\eta = (1-|\det(A)|^{\frac{1}{p}})^{p-1},
$$
then
$$
\theta (\eta,p) =  - \frac{\eta}{(1-\eta^{\frac{1}{p-1}})^{p-1}}= -\frac{(1-|\det(A)|)^{\frac{1}{p}})^{p-1}}{|\det(A)|^{\frac{p-1}{p}}}.
$$
Thus, we get
$$
\begin{array}{rl}
\displaystyle \lambda_{1,p} (\rr^d) & \displaystyle \geq  2
\left( \int_{\rr^d} \psi (z ) \, dz \right) \left(
\displaystyle \frac{(1-|\det(A)|)^{\frac{1}{p}})^{p-1}}{|\det(A)|}
 -  \frac{(1-|\det(A)|)^{\frac{1}{p}})^{p-1}}{|\det(A)|^{\frac{p-1}{p}}}
\right)
\\[8pt]
& \displaystyle =  2
\left( \int_{\rr^d} \psi (z ) \, dz \right) \left|
\displaystyle \frac{1}{|\det(A)|^{\frac{1}{p}}} -1
 \right|^p.
\end{array}
$$

The proof in the case  $|\det(A)|>1$ is analogous by
interchanging the roles of $x$ and $y$ and of $\eta$ and $\theta$. In fact, we arrive to
\[
\lambda_{1,p} (\rr^d) \ge 2 \left(\displaystyle \frac{\theta(\eta,p)}{|\det(A)|} + \eta \right)\int_{\rr^d} \psi (z ) \, dz.
\]
Now, we take
$$
\eta = (1-|\det(A)|^{\frac{-1}{p}})^{p-1} \quad \textrm{and} \quad
\theta (\eta,p) =  - \frac{\eta}{(1-\eta^{\frac{1}{p-1}})^{p-1}}
$$
to conclude.

For $p=1$ the proof is the same taking $\eta=1$ and $\theta=-1$.
\end{proof}

We now prove upper bounds for the first eigenvalue. Note that our proof is long and technical. Hence, we include here the details.

First, we prove a lemma that gives an upper bound of the first eigenvalue in terms of the integral of $\psi$ and an infimum  involving $a(x)$.

\begin{lemma}\label{cota.por.arriba} Given $1 \leq p < \infty$, then for any function $\phi$ with compact support, such that $\Vert \phi \Vert_{L^p(\rr^d)}=1$, we have
$$
\lambda_{1,p} (\rr^d) \leq  2
\left(\int_{\rr^d} \psi(z) \, dz\right) \int_{\rr^d} |\phi (x)-\phi (a(x))|^p\,dx.
$$
\end{lemma}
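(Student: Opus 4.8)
The plan is to combine the symmetry of $K$ with a scaling argument. As recorded in the proof of Lemma~\ref{lemma.geq}, the symmetry of the kernel \eqref{forma.nucleo} gives
\[
\lambda_{1,p}(\rr^d) = 2\inf\Bigl\{\int_{\rr^d}\int_{\rr^d}\psi(x-a(y))\,|u(x)-u(y)|^p\,dx\,dy : u\in L^p(\rr^d),\ \|u\|_{L^p(\rr^d)}=1\Bigr\},
\]
so any admissible $u$ produces an upper bound, and it is enough to exhibit a family of test functions whose double integrals converge to $\bigl(\int_{\rr^d}\psi\bigr)\int_{\rr^d}|\phi(x)-\phi(a(x))|^p\,dx$. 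One cannot simply take $u=\phi$: since $\psi$ is supported in the unit ball, the weight $\psi(x-a(y))$ only localizes $x$ to within distance $1$ of $a(y)$, and $|\phi(x)-\phi(y)|$ is not close to $|\phi(a(y))-\phi(y)|$ unless $\phi$ varies slowly on the unit scale. The remedy is to dilate $\phi$ so that it becomes as slowly varying as we wish.

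Concretely, for $\delta>0$ set $\phi_\delta(x):=\delta^{d/p}\phi(\delta x)$, so that $\|\phi_\delta\|_{L^p(\rr^d)}=\|\phi\|_{L^p(\rr^d)}=1$. Using $\phi_\delta$ as a test function and performing the change of variables $z=x-a(y)$ (for fixed $y$) followed by $w=\delta y$, together with the linearity of $a$ (which yields $\delta\,a(y)=a(\delta y)$), one reduces the double integral to
\[
\int_{\rr^d}\int_{\rr^d}\psi(z)\,\bigl|\phi(\delta z+a(w))-\phi(w)\bigr|^p\,dz\,dw,
\]
hence $\lambda_{1,p}(\rr^d)\le 2\int_{\rr^d}\int_{\rr^d}\psi(z)\,|\phi(\delta z+a(w))-\phi(w)|^p\,dz\,dw$ for every $\delta>0$.

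It then remains to let $\delta\to0$ in this bound. To do so without assuming continuity of $\phi$, I would use continuity of translations in $L^p$ (valid for $1\le p<\infty$): for fixed $z$ with $\psi(z)>0$, hence $|z|\le1$, Minkowski's inequality in $L^p(dw)$ bounds the difference of $\|\phi(\delta z+a(\cdot))-\phi\|_{L^p(\rr^d)}$ and $\|\phi(a(\cdot))-\phi\|_{L^p(\rr^d)}$ by $\|\phi(\delta z+a(\cdot))-\phi(a(\cdot))\|_{L^p(\rr^d)}$, and the substitution $v=a(w)$ identifies the latter with $|\det A|^{-1/p}\,\|\phi(\cdot+\delta z)-\phi\|_{L^p(\rr^d)}$, which tends to $0$ as $\delta\to0$, uniformly for $|z|\le1$. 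Therefore $\int_{\rr^d}|\phi(\delta z+a(w))-\phi(w)|^p\,dw\to\int_{\rr^d}|\phi(a(w))-\phi(w)|^p\,dw$ uniformly on the support of $\psi$; since $\psi\in L^1(\rr^d)$ one may integrate this against $\psi(z)\,dz$ and pass to the limit, obtaining the claimed inequality after renaming $w\mapsto x$ and noting $|\phi(a(x))-\phi(x)|=|\phi(x)-\phi(a(x))|$. The case $p=1$ requires no modification.

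The genuinely delicate point is precisely this interchange of the $\delta\to0$ limit with the integration in $z$ when $\phi$ is merely in $L^p$; handling it through the $L^p$-modulus of continuity of $\phi$ (uniform over $|z|\le1$, because $\psi$ has compact support) rather than through a pointwise argument is what makes the passage to the limit both clean and valid for an arbitrary compactly supported $\phi\in L^p(\rr^d)$. Everything else is routine bookkeeping of the two linear changes of variables.
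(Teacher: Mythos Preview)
Your argument is correct and follows the same scaling strategy as the paper: test with dilations $\phi_\delta(x)=\delta^{d/p}\phi(\delta x)$ (equivalently the paper's $u(x)=\phi(x/R)$ with $R=1/\delta$), perform the same two changes of variables to reach $2\int\!\!\int\psi(z)\,|\phi(\delta z+a(w))-\phi(w)|^p\,dz\,dw$, and then let the scaling parameter go to its limit.

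The one genuine difference is in how the limit is justified. The paper splits via $|a+b|^p\le(1+\varepsilon)|a|^p+C(\varepsilon)|b|^p$ and bounds the remainder using $\nabla\phi$, which forces $\phi$ to be smooth (indeed the paper's proof opens with ``let us choose a nonnegative smooth function $\phi$'' even though the lemma is stated for arbitrary compactly supported $\phi$). Your route through the $L^p$-modulus of continuity and Minkowski's inequality avoids both the $\varepsilon$-splitting and the smoothness assumption, so it proves the lemma exactly as stated and is a bit cleaner. Conceptually, though, the two proofs are the same.
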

\begin{proof}
First of all, recall that we have
$$
\lambda_{1,p} (\rr^d)=\inf _{u\in L^p(\rr^d) }\frac{\displaystyle
\int_{\rr^d}\int_{\rrd} K(x,y)|u(x)-u(y)|^p\, dx\, dy}{\displaystyle\int _{\rr^d}|u|^p(x)dx}.
$$
Let us choose a nonnegative smooth function $\phi$, supported in the unit ball $B_1$, such
that $\Vert \phi \Vert_{L^p(\rr^d)}=1$, and consider a family of test functions $u(x) = \phi (x/R)$, for $R>0$. By a change of variable, we have

\begin{align*}
\lambda_{1,p}(\rr^d) &\leq \frac{\displaystyle{\int_{\rr^d}\int_{\rrd} K(x,y)\left|\phi \left(\frac{x}{R}\right)-\phi\left(\frac{y}{R}\right)\right|^p\, dx\, dy}}{\displaystyle{\int_{\rr^d}\phi^p\left(\frac{x}{R}\right)dx}}
\\[8pt]
& = R^d
\int_{\rr^d}\int_{\rr^d} K(Rx,Ry)\, |\phi (x)-\phi (y)|^p\, dx\, dy.
\end{align*}
Using  the definition of the kernel $K$ and the change of variable $z= Ry - a(Rx)$, we obtain
\begin{align*}
\lambda_{1,p}(\rr^d) &\leq  2 R^d
\int_{\rr^d}\int_{\rr^d} \psi(Ry - a(Rx)) \, |\phi (x)-\phi (y)|^p\, dx\, dy
\\[8pt]
& \displaystyle = 2
\int_{\rr^d}\int_{\rr^d} \psi(z)\left|\phi (x)-\phi \left(a(x) + \frac{z}{R}\right)\right|^p\, dz\,
dx.
\end{align*}

Thus, given $\varepsilon>0$, there exists a positive constant $C(\varepsilon)$ such that
$$
\begin{array}{rl}
\displaystyle \lambda_{1,p} (\rr^d) & \displaystyle \leq 2(1+\varepsilon)
\int_{\rr^d}\int_{\rr^d} \psi(z)|\phi (x)-\phi (a(x))|^p\, dz\,
dx \\ \\
& \qquad \displaystyle + C(\varepsilon)
\int_{\rr^d}\int_{\rr^d} \psi(z)\left|\phi (a(x))-\phi \left(a(x) + \frac{z}{R}\right)\right|^p\, dz\,
dx.
\end{array}
$$
Further, we have
$$
\begin{array}{rl}
\displaystyle \lambda_{1,p} (\rr^d) & \displaystyle \leq 2 (1+\varepsilon)
\left(\int_{\rr^d} \psi(z) \, dz\right)\int_{\rr^d} |\phi (x)-\phi (a(x))|^p\,
dx \\ \\
& \qquad \displaystyle + \frac{C(\varepsilon)}{R^p}
\int_{B_1} \psi(z) \int_{\rr^d} \int_0^1 \left\Vert \nabla \phi \left(a(x) + \frac{s z}{R}\right)\right\Vert^p \|z\|^p\, ds\, dx\, dz
\\ \\
& \displaystyle \leq 2(1+\varepsilon)
\left(\int_{\rr^d} \psi(z) \, dz\right)\int_{\rr^d} |\phi (x)-\phi (a(x))|^p\,
dx \\ \\
& \qquad \displaystyle + \frac{C(\varepsilon)}{R^p}
\int_{B_1} \psi(z)\, \|z\|^p\,  |\det(A)|^{-1}
\int_{\rr^d} \int_0^1 \left\Vert\nabla \phi \left(x + \frac{s z}{R}\right)\right\Vert^p \, ds\, dx\, dz.
\end{array}
$$
Therefore, letting $R\to \infty$ we obtain
$$
\lambda_{1,p} (\rr^d) \leq 2(1+\varepsilon)
\left(\int_{\rr^d} \psi(z) \, dz\right)\int_{\rr^d} |\phi (x)-\phi (a(x))|^p\,
dx
$$
and letting $\varepsilon \to 0$
$$
\lambda_{1,p} (\rr^d) \leq  2
\left(\int_{\rr^d} \psi(z) \, dz\right)\int_{\rr^d} |\phi (x)-\phi (a(x))|^p\,
dx,
$$
as we wanted to show.
\end{proof}

 According to Lemma \ref{lemma.geq}, we already know the lower bound for the first eigenvalue, and as a consequence, for any $\phi $ with compact support we have $$
\int_{\rr^d} |\phi (x)-\phi (a(x))|^p\,
dx \geq \left| \frac{1}{|\det(A)|^{\frac{1}{p}}} -1
 \right|^p.
$$
Our task is to construct a minimizing sequence. For future references, we prefer to be more specific and choose the minimizing sequence to be supported in the unit ball.

\begin{lemma}\label{por.arriba}  Let $1 \leq p < \infty$ and  $a: \mathbb{R}^d \to \mathbb{R}^d$
be an invertible linear map with $a(x)= Ax$.
There exists a sequence of nonnegative functions $\{ \phi_n \}_{n=1}^{\infty} \subset L^p(\rr^d) $, supported in the unit ball  with $\| \phi_n \|_{L^p(\rr^d)}=1$, such that
\begin{equation}
\label{mini}
\lim_{n \to \infty} \int_{\rr^d} |\phi_n (x)-\phi_n (a(x))|^p\,
dx = \left| 1 -  |\det(A)|^{-\frac{1}{p}} \right|^p.
\end{equation}
\end{lemma}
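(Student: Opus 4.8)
By Lemma~\ref{lemma.geq} we already know that $\int_{\rr^d}|\phi(x)-\phi(a(x))|^p\,dx\ge\big|1-|\det A|^{-1/p}\big|^p$ for every $\phi$ with compact support and $\|\phi\|_{L^p(\rr^d)}=1$, so to prove the lemma it suffices to exhibit a sequence attaining this value. The plan is to first build, for each $n$, a nonnegative $\psi_n\in L^p(\rr^d)$ with compact support (whose size is allowed to grow with $n$) and $\|\psi_n\|_{L^p(\rr^d)}=1$ such that $\int_{\rr^d}|\psi_n(x)-\psi_n(a(x))|^p\,dx\to\big|1-|\det A|^{-1/p}\big|^p$ as $n\to\infty$, and then to rescale: if $\operatorname{supp}\psi_n\subset B_{R_n}$, the function $\phi_n(x):=R_n^{-d/p}\psi_n(x/R_n)$ is supported in the unit ball, and the linear change of variables $x\mapsto R_n x$ (using $A(x/R_n)=Ax/R_n$) shows that both $\|\phi_n\|_{L^p(\rr^d)}^p$ and $\int_{\rr^d}|\phi_n(x)-\phi_n(a(x))|^p\,dx$ equal the corresponding quantities for $\psi_n$. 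So the whole point is the construction of $\psi_n$, which we split according to whether $|\det A|$ equals $1$ or not.

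Assume first $|\det A|=1$; then the target value is $0$. Fix a bounded set $E\subset\rr^d$ with $|E|>0$ (say $E=B_1$) and set
$$
\psi_n=|E|^{-1/p}\Big(\tfrac1{2n+1}\sum_{k=-n}^{n}\mathbf 1_{A^kE}\Big)^{1/p}.
$$
This is nonnegative and compactly supported, and since $|A^kE|=|\det A|^k|E|=|E|$ we get $\|\psi_n\|_{L^p(\rr^d)}=1$. Moreover $\psi_n(a(x))^p$ is obtained from $\psi_n(x)^p$ by shifting the summation index $k\mapsto k+1$, so the sums telescope and $\psi_n(x)^p-\psi_n(a(x))^p=\tfrac1{(2n+1)|E|}\big(\mathbf 1_{A^nE}(x)-\mathbf 1_{A^{-n-1}E}(x)\big)$. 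Using the elementary inequality $|\alpha-\beta|^p\le|\alpha^p-\beta^p|$, valid for $\alpha,\beta\ge0$ and $p\ge1$, we obtain $\int_{\rr^d}|\psi_n-\psi_n\circ a|^p\le\int_{\rr^d}|\psi_n^p-\psi_n^p\circ a|\le\tfrac2{2n+1}\to0$, which is \eqref{mini} in this case.

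Now assume $|\det A|\neq1$. Since $|\det A|$ is the product of the moduli of the eigenvalues of $A$, some eigenvalue $\mu$ has $|\mu|\neq1$; pick a (possibly complex) linear functional $\ell\neq0$ on $\rr^d$ with $\ell(Ax)=\mu\,\ell(x)$ for all $x$ (a left eigenvector of $A$). Then $h(x):=|\ell(x)|^2$ is a nonnegative quadratic form, not identically zero, with $h(Ax)=\beta\,h(x)$ where $\beta:=|\mu|^2\neq1$. Pick a half-open interval $J\subset(0,\infty)$ such that the sets $\beta^kJ$, $k\in\zz$, are pairwise disjoint and cover $(0,\infty)$ (for instance $J=[1,\beta)$ if $\beta>1$). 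Then the ``slabs'' $\{h\in\beta^kJ\}$ are pairwise disjoint and $A^k$ maps $\{h\in J\}$ onto $\{h\in\beta^kJ\}$; hence, for $\rho$ large enough, the set $D:=\{h\in J\}\cap B_\rho$ is bounded, has $m:=|D|>0$, and its iterates $A^kD\subset\{h\in\beta^kJ\}$ ($k\in\zz$) are pairwise disjoint with $|A^kD|=|\det A|^k\,m$. Define the step function
$$
\psi_n=\frac1{(nm)^{1/p}}\sum_{k=0}^{n-1}|\det A|^{-k/p}\,\mathbf 1_{A^kD},
$$
which is nonnegative, compactly supported, and (by disjointness and the volume identity) satisfies $\|\psi_n\|_{L^p(\rr^d)}=1$. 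Since $\mathbf 1_{A^kD}(Ax)=\mathbf 1_{A^{k-1}D}(x)$, the function $\psi_n\circ a$ is again a step function on the shells $A^jD$ with the indices relabelled: on each ``interior'' shell $A^jD$, $0\le j\le n-2$, one has $|\psi_n-\psi_n\circ a|=(nm)^{-1/p}|\det A|^{-j/p}\big|1-|\det A|^{-1/p}\big|$, while on the two ``boundary'' shells $A^{n-1}D$ and $A^{-1}D$ exactly one of the two terms survives; outside $\bigcup_{j=-1}^{n-1}A^jD$ both $\psi_n$ and $\psi_n\circ a$ vanish. A direct computation using $|A^jD|=|\det A|^jm$ then gives
$$
\int_{\rr^d}|\psi_n-\psi_n\circ a|^p=\frac{(n-1)\big|1-|\det A|^{-1/p}\big|^p+1+|\det A|^{-1}}{n},
$$
and the right-hand side tends to $\big|1-|\det A|^{-1/p}\big|^p$ as $n\to\infty$; this is \eqref{mini}. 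Rescaling as in the first paragraph yields the sequence $\{\phi_n\}$.

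The only genuinely delicate point is the case $|\det A|\neq1$, namely the construction of a bounded set $D$ whose $A$-iterates are pairwise disjoint: this is where the auxiliary quadratic form $h$ — in particular the passage to $|\ell(\cdot)|^2$ when the eigenvalue $\mu$ is not real — is needed, together with the bookkeeping of the volumes $|A^kD|=|\det A|^k m$. Everything else (the telescoping identities, the elementary inequality $|\alpha-\beta|^p\le|\alpha^p-\beta^p|$, and the reduction to functions supported in the unit ball via scale invariance) is routine.
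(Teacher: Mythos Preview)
Your proof is correct and takes a route quite different from the paper's. The paper reduces to the real Jordan form of $A$ and builds, in six separate lemmas, minimizing sequences for each type of block (expansive, contractive, diagonal unitary, and nondiagonalizable unitary, real and complex), finally combining them via tensor products; this is long and case-heavy. Your argument avoids Jordan form entirely. For $|\det A|=1$ you use an ergodic-flavoured averaging (the $p$-th root of an orbit average of $\mathbf 1_E$) together with the superadditivity inequality $|\alpha-\beta|^p\le|\alpha^p-\beta^p|$, which dispenses with all the delicate unitary-block constructions in the paper. For $|\det A|\neq1$ your key idea is to pick a single (possibly complex) left eigenvector $\ell$ with $|\mu|\neq1$ and use the scalar-homogeneous quadratic form $h=|\ell|^2$ to manufacture $A$-disjoint ``slabs'' $A^kD$; this is cleaner than the paper's construction because it produces disjoint iterates without ever passing through $A^{-j}(B)$ or analyzing orbit separation for nilpotent parts. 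The paper's construction has the mild advantage that its test functions are simple characteristic-function sums (no $p$-th roots), and the tensor-product viewpoint makes the role of each spectral piece transparent; your approach is considerably shorter and more elementary.
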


Our strategy for the proof of Lemma \ref{por.arriba} is  to construct  a sequence of functions as above for each Jordan block of $A$ and afterwards, using those functions, we define the desired sequence as a tensor product.

By the  Jordan's decomposition of $A$, there exist $C$ and  $J$ two
$d \times d$ invertible matrices with real entries such that $A=
CJC^{-1}$. Note that $J$ is defined by  Jordan blocks, i.e.,
\begin{equation}
\label{Jordan}
 J= \left (
\begin{array}{ccccccccc}
  J_1 (\lambda_1)&        &    &  &        &    &           \\
          &  \ddots &  & &        &    &        \\
          &         &         & J_r (\lambda_r) &        &    &      \\
    &        &    &      &       J_{r+1} (\alpha_1, \beta_1)&        &    &       \\
    &        &    &      &       &  \ddots &  &   \\
     &        &    &      &      &         &         & J_{r+ s} (\alpha_s, \beta_s)
\end{array}  \right ),
\end{equation}
with $|\lambda_1| \geq |\lambda_2| \geq \cdots \geq |\lambda_r| > 0$, $\alpha_i^2 + \beta^2_i \neq 0$, (recall that $A$ is invertible)
\begin{equation}
\label{realJordan}
 J_i(\lambda)= \left (
\begin{array}{ccccccccc}
  \lambda & 1       &    &       \\
          &  \ddots &  & 1  \\
          &         &         & \lambda
\end{array}  \right ), \quad i=1,\dots, r,
\end{equation}
and
\begin{equation}
\label{complexJordan}
 J_i(\alpha, \beta)= \left (
\begin{array}{ccccccccc}
  \mathbf{M} & \mathbf{I}       &    &       \\
          &  \ddots &  & \mathbf{I}  \\
          &         &         & \mathbf{M}
\end{array}  \right ), \quad i=r+1,\dots,r+s.
\end{equation}
Here $\lambda$, $\alpha$ and $\beta$ are real numbers,  $\mathbf{M}
=
\left (
\begin{array}{cc}
  \alpha & \beta       \\
    -\beta      &  \alpha
\end{array}  \right )
$ and $\mathbf{I} = \left (
\begin{array}{cc}
  1 & 0       \\
    0      &  1
\end{array}  \right )$.
Note that $J_i(\lambda_i)$ and  $J_i(\alpha_i,\beta_i)$ are the real Jordan blocks corresponding to  real and complex eigenvalues respectively.

Since the proof of Lemma \ref{por.arriba} is quite large we divide it in few steps  according to properties of the Jordan blocks.

First, we consider $J_i(\lambda_i)$ and $J_i(\alpha_i, \beta_i)$ when $| \lambda_i | >1$ and  $\alpha_i^2 + \beta_i^2 >1$. Indeed we write our construction in a more general context of expansive linear maps.
Recall that a linear map $a:\rr^d \to \rr^d$ with $a(x)= Ax$ is called expansive if all the (complex) eigenvalues of $A$ have absolute value bigger than one. Here and in what follows we denote by $\mid \cdot \mid_d$ the
Lebesgue measure of a set in $\rr^d$.

\begin{lemma}\label{minimizerexpansive} Let $1 \leq p < \infty$ and  $a: \mathbb{R}^d \to \mathbb{R}^d$
be an expansive linear map with $a(x)=Ax$. There exists a sequence of sets $\{ E_j \}_{j \in \nn_0} \subset B_1$ of positive measure such that
\begin{enumerate}
\item[(i)]    $a^{-1}(E_j) = E_{j+1}$, $|E_j|_d= |\det (A)|^{-j}|E_0|_d$ and
$
E_j \cap E_l= \emptyset,$ whenever $j,l \in \nn_0$ and $j \neq  l$.
\item[(ii)] If we choose $\sigma_n= |\det (A)|^{1/p} - \frac{1}{n} $ and
$$
\varphi_n (x)= \sum_{j=0}^{\infty} (\sigma_n)^j \chi_{E_j}(x), \qquad \textrm{for } n \geq 1,
$$
then the functions $\varphi_n$ are nonnegative and supported in the unit ball. Moreover, they belong to
$L^p(\rr^d)$ satisfying
\begin{eqnarray*}
\|\varphi_n \|^p_{L^p(\mathbb{R}^d)}=
\frac{|E_0|_d }{1- \sigma_n^p/ |\det (A)|} \qquad \mbox{ for } n\ge 1.
\end{eqnarray*}
\item[(iii)] The sequence   $\phi_n = \frac{\varphi_n}{\|\varphi_n \|_{L^p(\mathbb{R}^d)}}$, $n \geq 1$, satisfies \eqref{mini}.
\end{enumerate}
\end{lemma}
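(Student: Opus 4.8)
I would prove the three parts in order; part (i) is the only one requiring an idea, the rest being a bookkeeping computation with a geometric series over a disjoint family of sets.

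\emph{Part (i): constructing the sets.} Since $a$ is expansive, the spectral radius of $A^{-1}$ is strictly less than $1$, so there exist a norm $\|\cdot\|_*$ on $\rr^d$ and $\rho\in(0,1)$ with $\|A^{-1}x\|_*\le\rho\|x\|_*$ for all $x$ (a standard consequence of Householder's theorem, or of the Jordan decomposition \eqref{Jordan} after rescaling). Write $B_*(r)$ for the $\|\cdot\|_*$-ball of radius $r$ about the origin and fix $r>0$ so small that $\overline{B_*(r)}\subset B_1$. Then $a^{-1}(B_*(r))\subset B_*(\rho r)\subsetneq B_*(r)$, and I set
$$
E_0:=B_*(r)\setminus a^{-1}(B_*(r)),\qquad E_j:=a^{-j}(E_0)=a^{-j}(B_*(r))\setminus a^{-(j+1)}(B_*(r)).
$$
The sequence $\bigl(a^{-m}(B_*(r))\bigr)_m$ is nested decreasing and contained in $B_*(r)\subset B_1$, which gives $E_j\subset B_1$ and, since $E_l\subset a^{-(j+1)}(B_*(r))$ for $l>j$ while $E_j\cap a^{-(j+1)}(B_*(r))=\emptyset$, pairwise disjointness. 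The linear change of variables formula yields $|E_j|_d=|\det A|^{-j}|E_0|_d$ and $|E_0|_d=(1-|\det A|^{-1})|B_*(r)|_d>0$, and $a^{-1}(E_j)=E_{j+1}$ by construction.

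\emph{Parts (ii) and (iii): the computation.} Because $A$ is expansive, $|\det A|>1$, so for every $n\ge1$ one has $0<\sigma_n<|\det A|^{1/p}$, hence $\sigma_n^p/|\det A|<1$; thus $\varphi_n\ge0$, it is supported in $\bigcup_j E_j\subset B_*(r)\subset B_1$, and by disjointness of the $E_j$,
$$
\|\varphi_n\|_{L^p(\rr^d)}^p=\sum_{j=0}^\infty\sigma_n^{jp}|E_j|_d=|E_0|_d\sum_{j=0}^\infty\Bigl(\tfrac{\sigma_n^p}{|\det A|}\Bigr)^j=\frac{|E_0|_d}{1-\sigma_n^p/|\det A|},
$$
which is (ii). For (iii), from $\chi_{E_j}(a(x))=\chi_{a^{-1}(E_j)}(x)=\chi_{E_{j+1}}(x)$ one gets $\varphi_n(a(x))=\sum_{j\ge0}\sigma_n^j\chi_{E_{j+1}}(x)$, so
$$
\varphi_n(x)-\varphi_n(a(x))=\chi_{E_0}(x)+(\sigma_n-1)\sum_{j\ge1}\sigma_n^{j-1}\chi_{E_j}(x),
$$
and disjointness again gives
$$
\int_{\rr^d}|\varphi_n(x)-\varphi_n(a(x))|^p\,dx=|E_0|_d+|\sigma_n-1|^p\sum_{j\ge1}\sigma_n^{(j-1)p}|E_j|_d=|E_0|_d\,\frac{|\det A|-\sigma_n^p+|\sigma_n-1|^p}{|\det A|-\sigma_n^p}.
$$
Dividing by $\|\varphi_n\|_{L^p(\rr^d)}^p$ from part (ii),
$$
\int_{\rr^d}|\phi_n(x)-\phi_n(a(x))|^p\,dx=\frac{|\det A|-\sigma_n^p+|\sigma_n-1|^p}{|\det A|}.
$$
Letting $n\to\infty$, since $\sigma_n\to|\det A|^{1/p}$ the right-hand side tends to $\bigl||\det A|^{1/p}-1\bigr|^p/|\det A|=\bigl|1-|\det A|^{-1/p}\bigr|^p$, which is \eqref{mini}.

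\emph{Main obstacle.} Everything after part (i) is routine: summing a geometric series over the disjoint family $\{E_j\}$ and the algebraic identity $\bigl||\det A|^{1/p}-1\bigr|^p/|\det A|=\bigl|1-|\det A|^{-1/p}\bigr|^p$. The one genuine point is part (i): all backward iterates $a^{-j}(E_0)$ must lie inside the \emph{Euclidean} unit ball, yet $a^{-1}$ need not be a Euclidean contraction (off-diagonal Jordan entries can cause transient expansion). Passing to an adapted norm in which $A^{-1}$ strictly contracts — equivalently, exhibiting a bounded neighbourhood $U$ of the origin with $\overline{a^{-1}(U)}\subset U$ — is precisely what makes the nested "annular" construction $E_j=a^{-j}(B_*(r))\setminus a^{-(j+1)}(B_*(r))$ stay confined in $B_1$ and be disjoint.
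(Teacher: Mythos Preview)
Your proof is correct and follows essentially the same approach as the paper: the annular construction $E_j=a^{-j}(U)\setminus a^{-(j+1)}(U)$ for a set $U$ with $a^{-1}(U)\subset U\subset B_1$, followed by the same geometric-series computations for (ii) and (iii). The only cosmetic difference is that the paper takes $U=\bigcup_{j\ge0}a^{-j}(B)$ for a small Euclidean ball $B$ (simply asserting that such a $B$ with all backward iterates in $B_1$ exists), whereas you take $U=B_*(r)$ for an adapted norm ball and are more explicit about why this works via Householder's theorem.
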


\begin{proof} We first construct our candidate to sequence of sets.
Since  $a$
is expansive, there exists $B \subset \mathbb{R}^d$ a ball with
center the origin such that $a^{-j}(B) \subset B_1$, $\forall j \in \nn_0$. Take the following sets
\begin{equation*}
\displaystyle F= \bigcup_{j=0}^{\infty}a^{-j}(B) \quad \textrm{and} \qquad E_l= a^{-l}(F) \setminus a^{-(l+1)}(F),
\quad \textrm{for } l \in \nn_0.
\end{equation*}

Let us first prove (i). By construction, we have that $F$ and $ E_l$ are subsets of the unit ball for all $\l \in \nn_0$. Moreover, we have that   $a^{-l}(E_0)= E_l$ and hence $|E_l|_d= |\det (A)|^{-l}|E_0|_d$.
Since $|\det (A)| >1$  and $|F|_d - |\det (A)|^{-1}|F|_d >0$ then $|E_0|_d>0$.
Hence we conclude that $E_l $ has positive measure for any $l\geq 0$
and, by the construction of the sets $E_l$, it follows that $E_l \cap E_j = \emptyset$  whenever $l \neq j$.
Also by construction, since $|\det A|>1$, $\varphi_n$ is nonnegative and supported in the unit ball for all $n\geq 1$. Moreover, $\varphi_n$ belongs to $L^p(\rr^d)$ and we can compute explicitly its norm
\begin{align*}
\|\varphi_n \|^p_{L^p(\mathbb{R}^d)} &= \sum_{j=0}^{\infty} \sigma_n^{jp} |E_j|_d =
|E_0|_d \sum_{j=0}^{\infty} \sigma_n^{jp} |\det (A)|^{-j} =|E_0|_d \sum_{j=0}^{\infty} \left(1 - \frac{|\det (A)|^{-1/p}}{n} \right)^{jp}\\
& =\frac{|E_0|_d }{1- \sigma_n^p/ |\det (A)|}.
\end{align*}

Finally, we prove (iii). For any positive number $n$ we have
\begin{eqnarray*}
\int_{\rr^d} |\varphi_n(x) - \varphi_n(a( x))|^p dx  &=& \int_{\rr^d}  \Big|
\sum_{j=0}^{\infty} \sigma_n^{j} \chi_{E_j} (x) -  \sum_{j=0}^{\infty} \sigma_n^{j} \chi_{E_j} (a(x)) \Big|^p dx\\  &=& \int_{\rr^d}  \Big|
\sum_{j=0 }^{\infty} \sigma_n^{j} \chi_{E_j} (x) -  \sum_{j=1}^{\infty} \sigma_n^{j-1} \chi_{E_{j-1}} (a(x)) \Big|^p dx.
\end{eqnarray*}
Since for any $j\geq 0$, $a^{-1}(E_j)= E_{j+1}$, we have $\chi_{E_{j-1}}(a(x))=\chi_{E_j}(x)$. Using also that  $E_j \cap E_l= \emptyset$ if $j \neq l$, we have
\begin{eqnarray*}
\int_{\rr^d} |\varphi_n(x) - \varphi_n(a( x))|^p dx & = & |E_0|_d + \int_{\rr^d}  \Big| \sum_{j=1}^{\infty} \sigma_n^{j-1} (\sigma_n -1) \chi_{E_j} (x) \Big |^p dx \\
& = &  |E_0|_d +  \sum_{j=1}^{\infty} \sigma_n^{(j-1)p}\, |\sigma_n -1|^p \, |E_j|_d.
\end{eqnarray*}
Using that $|E_j|_d= |\det (A)|^{-j} \, |E_0|_d$, it follows that
 \begin{eqnarray*}
\int_{\rr^d} |\varphi_n(x) - \varphi_n(a( x))|^p dx & = &  |E_0|_d +  |\sigma_n -1|^p \, |E_0|_d \, \sum_{j=1}^{\infty} \sigma_n^{(j-1)p} \,  |\det (A)|^{-j} \\
& = &  |E_0|_d +  |\sigma_n -1|^p \, |E_0|_d \, |\det (A)|^{-1} \,  \sum_{j=0}^{\infty} \sigma_n^{jp} \, |\det(A)|^{-j} \\
& = & |E_0|_d + |\sigma_n -1|^p \, |E_0|_d \, |\det (A)|^{-1} \, \dfrac{1}{1- \sigma_n^p/ |\det (A)|}.
\end{eqnarray*}
\medskip
Further, using the definition of the functions $\phi_n$ and since $\lim\limits_{n \to \infty} \|\varphi_n \|^p_{L^p(\mathbb{R}^d)} = \infty $,
$$
\lim_{n \to \infty }\int_{\rr^d} |\phi_n(x) - \phi_n(a( x))|^p dx = |\det (A)|^{-1}|\, |\det (A)|^{1/p} -1|^p= |1-|\det (A)|^{-1/p} |^p,
$$
which proves that \eqref{mini} holds for any expansive map $a$.
 \end{proof}

Next, we consider $J_i(\lambda_i)$ and $J_i(\alpha_i, \beta_i)$ when $0< | \lambda_i | <1$ and  $0< \alpha_i^2 + \beta_i^2 <1$. We write our construction in a more general context, where an invertible linear map $a$ satisfies that $a^{-1}$ is expansive. Using the same techniques as in the proof of Lemma \ref{minimizerexpansive} we obtain the following.
\begin{lemma}\label{minimizercontractive} Let $1 \leq p < \infty$ and  $a: \mathbb{R}^d \to \mathbb{R}^d$
be an invertible linear map with $a(x)=Ax$ such that $a^{-1}$ is expansive. Then  there exists a sequence  of sets $ \{ G_j \}_{j\in \nn_0} \subset B_1$ of positive measure such that
\begin{enumerate}
\item[(i)] $a(G_j) = G_{j+1}$, $|G_j|_d = |\det (A)|^{j}\, |G_0|_d$ and $G_j \cap G_l= \emptyset$, whenever $j,l \in \nn_0$ and $j \neq  l$.
\item[(ii)] If we choose $\gamma_n= |\det (A)|^{-1/p} - \frac{1}{n}$ and
$$
\varphi_n (x)= \sum_{j=0}^{\infty} (\gamma_n)^j \chi_{G_j}(x), \qquad  \mbox{ for } n \ge 1,
$$
then the functions $\varphi_n$ are nonnegative and supported in the unit ball. Moreover, they belong to
$L^p(\rr^d)$ satisfying
\begin{eqnarray*}
\|\varphi_n \|^p_{L^p(\mathbb{R}^d)}=\frac{1}{1-\gamma_n^p \, |\det (A)|}.
\end{eqnarray*}
\item[(iii)]  The sequence  $\phi_n = \frac{\varphi_n}{\|\varphi_n \|_{L^p(\mathbb{R}^d)}}$, $n \in \nn$, satisfies  \eqref{mini}.
\end{enumerate}
\end{lemma}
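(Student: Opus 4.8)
The plan is to transcribe the construction of Lemma~\ref{minimizerexpansive}, now applied to the expansive map $b=a^{-1}$. Since all eigenvalues of $A$ have modulus less than one, $|\det A|<1$ and $a$ contracts towards the origin; by the expansiveness of $b=a^{-1}$ there is a ball $B$ centered at the origin with $b^{-j}(B)=a^{j}(B)\subset B_1$ for all $j\in\nn_0$. I would then set
$$
F=\bigcup_{j=0}^{\infty}a^{j}(B),\qquad G_l=a^{l}(F)\setminus a^{l+1}(F),\quad l\in\nn_0 ,
$$
so every $G_l\subset F\subset B_1$. Since $a(F)=\bigcup_{j\ge 1}a^{j}(B)\subset F$, one has $a^{l+1}(F)\subset a^{l}(F)$ and $a(G_l)=G_{l+1}$; as $|F|_d<\infty$, $|a^{l}(F)|_d=|\det A|^{l}|F|_d$, whence
$$
|G_l|_d=|a^{l}(F)|_d-|a^{l+1}(F)|_d=|\det A|^{l}(1-|\det A|)|F|_d=|\det A|^{l}|G_0|_d ,
$$
with $|G_0|_d=(1-|\det A|)|F|_d>0$; disjointness holds because for $j>l$ we have $G_j\subset a^{l+1}(F)$ whereas $G_l\cap a^{l+1}(F)=\emptyset$. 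This is (i), after rescaling $B$ if one wants $|G_0|_d=1$ (which only affects an overall constant).

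For (ii): from $|\det A|<1$ we get $|\det A|^{-1/p}>1$, so $\gamma_n=|\det A|^{-1/p}-\frac{1}{n}\in(0,|\det A|^{-1/p})$ for all $n\ge 1$, hence $\gamma_n^p|\det A|<1$ and the function $\varphi_n=\sum_{j\ge 0}\gamma_n^{j}\chi_{G_j}$ is nonnegative, supported in $B_1$, with
$$
\|\varphi_n\|_{L^p(\rr^d)}^p=\sum_{j=0}^{\infty}\gamma_n^{jp}|G_j|_d=|G_0|_d\sum_{j=0}^{\infty}\bigl(\gamma_n^p|\det A|\bigr)^{j}=\frac{|G_0|_d}{1-\gamma_n^p|\det A|}
$$
by summing the geometric series; this is the claimed formula, and since $\gamma_n^p|\det A|\uparrow 1$ as $n\to\infty$ we also record that $\|\varphi_n\|_{L^p(\rr^d)}^p\to\infty$.

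For (iii): from $a(G_j)=G_{j+1}$ one has $\chi_{G_j}(a(x))=\chi_{G_{j-1}}(x)$ for $j\ge 1$ and $\chi_{G_0}(a(x))=\chi_{a^{-1}(G_0)}(x)$, where $a^{-1}(G_0)=a^{-1}(F)\setminus F$ is disjoint from all the $G_j$. Hence
$$
\varphi_n(x)-\varphi_n(a(x))=(1-\gamma_n)\sum_{j=0}^{\infty}\gamma_n^{j}\chi_{G_j}(x)-\chi_{a^{-1}(G_0)}(x),
$$
a difference of functions with disjoint supports; using $|G_j|_d=|\det A|^{j}|G_0|_d$ and $|a^{-1}(G_0)|_d=|\det A|^{-1}|G_0|_d$,
$$
\int_{\rr^d}|\varphi_n(x)-\varphi_n(a(x))|^p\,dx=|1-\gamma_n|^p\,\|\varphi_n\|_{L^p(\rr^d)}^p+|\det A|^{-1}|G_0|_d .
$$
Dividing by $\|\varphi_n\|_{L^p(\rr^d)}^p$ and letting $n\to\infty$, the last term vanishes (since $\|\varphi_n\|_{L^p(\rr^d)}^p\to\infty$) while $|1-\gamma_n|^p\to\bigl||\det A|^{-1/p}-1\bigr|^p$, giving \eqref{mini}. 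The only real difference from Lemma~\ref{minimizerexpansive} is that here the shifted function $\varphi_n\circ a$ produces the extra set $a^{-1}(G_0)$ instead of $\varphi_n$ being $\equiv1$ and $\varphi_n\circ a\equiv0$ on a single set; I expect the only point needing care to be the bookkeeping of this boundary term — that it lies outside $\bigcup_j G_j$ and has measure $|\det A|^{-1}|G_0|_d$ — the rest being a direct transcription.
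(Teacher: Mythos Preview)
Your proof is correct and is precisely the transcription of the expansive case that the paper itself indicates (``using the same techniques as in the proof of Lemma~\ref{minimizerexpansive}''), so your approach matches the paper's. Your observation that the stated norm formula tacitly assumes $|G_0|_d=1$ (or else carries the harmless factor $|G_0|_d$, as in Lemma~\ref{minimizerexpansive}) is accurate, and your handling of the boundary set $a^{-1}(G_0)=a^{-1}(F)\setminus F$ is the only place where genuine care is needed, which you get right.
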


In the following three lemmas, we consider linear maps whose eigenvalues have absolute value equal to one.

First, we deal with the diagonalizable (in $\mathbb{C}$) case. In the next lemma, we deal with several real Jordan blocks simultaneously that correspond to the unitary eigenvalues $\lambda$ that have dimension one if $\lambda$ is real and two if $\lambda\in  \mathbb{C}\setminus \rr$.
\begin{lemma}\label{minimizeruni} Let $1 \leq p < \infty$ and $a: \mathbb{R}^d \to \mathbb{R}^d$, $a(x)=Ax$
with
\begin{equation}
\label{Jordandiagonalizable1}
 A= \left (
\begin{array}{ccccccccc}
 \mathbf{M}_1 (\lambda_1)&        &    &  &        &    &           \\
          &  \ddots &  & &        &    &        \\
          &         &         & \mathbf{M}_r (\lambda_r) &        &    &      \\
    &        &    &      &       \mathbf{M}_{r+1} (\alpha_1, \beta_1)&        &    &       \\
    &        &    &      &       &  \ddots &  &   \\
     &        &    &      &      &         &         & \mathbf{M}_{r+ s} (\alpha_s, \beta_s)
\end{array}  \right ),
\end{equation}
where $\lambda_i$, $\alpha_i$ and $\beta_i$ are real numbers such that $|\lambda_i| =1$ and $\alpha_i^2 + \beta^2_i =1$, $\mathbf{M}_i (\lambda_i)= \lambda_i$
and
\begin{equation*} \mathbf{M}_{i} (\alpha, \beta)
=
\left (
\begin{array}{ccccccccc}
  \alpha & \beta       \\
    -\beta      &  \alpha
\end{array}  \right ).
\end{equation*}
Then $\varphi= |B_1|^{-1/p}_d\chi_{B_1}$ satisfies
$$ \int_{\rr^d} |\varphi(x) - \varphi(a( x)) |^p dx =0.
$$
\end{lemma}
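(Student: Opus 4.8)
The plan is to show that, under the hypotheses on the blocks, the matrix $A$ is orthogonal, so that $a$ maps the unit ball $B_1$ onto itself; once this is established the integrand vanishes identically and the conclusion is immediate, with nothing left to estimate.

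First I would check that each diagonal block of $A$ is an orthogonal matrix. A one-dimensional block $\mathbf{M}_i(\lambda_i)=\lambda_i$ with $\lambda_i\in\rr$ and $|\lambda_i|=1$ is just multiplication by $\pm1$, hence orthogonal; a two-dimensional block $\mathbf{M}_i(\alpha_i,\beta_i)$ with $\alpha_i^2+\beta_i^2=1$ has orthonormal columns, so it is orthogonal as well (it is a plane rotation, with determinant $\alpha_i^2+\beta_i^2=1$). Since $A$ is block diagonal with orthogonal blocks, $A^{T}A=I$; in particular $|\det A|=1$ and $\|Ax\|=\|x\|$ for every $x\in\rr^d$.

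Next I would exploit the isometry property: from $\|Ax\|=\|x\|$ we get $Ax\in B_1$ if and only if $x\in B_1$, that is $a^{-1}(B_1)=B_1$, whence $\chi_{B_1}(a(x))=\chi_{B_1}(x)$ for all $x$. Therefore, with $\varphi=|B_1|_d^{-1/p}\chi_{B_1}$, we have $\varphi(a(x))=\varphi(x)$ for every $x$ and
$$
\int_{\rr^d}|\varphi(x)-\varphi(a(x))|^p\,dx=0 .
$$
One also checks $\|\varphi\|_{L^p(\rr^d)}=1$, and since $|\det A|=1$ the right-hand side of \eqref{mini} equals $|1-1|^p=0$, so this case is consistent with (and a degenerate instance of) the statement of Lemma~\ref{por.arriba}.

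There is essentially no obstacle here: the only point that needs a line of verification is that the $2\times2$ blocks are genuinely orthogonal rather than merely unimodular, which is immediate from $\alpha_i^2+\beta_i^2=1$, together with the elementary fact that a block-diagonal matrix assembled from orthogonal blocks is orthogonal.
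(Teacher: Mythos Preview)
Your proof is correct and follows exactly the approach of the paper: the paper's proof is the single line ``Since $a$ leaves invariant the unit ball, we have that $\varphi(a(x))= \varphi(x)$ and the assertion follows,'' and you have simply supplied the (easy) verification that $A$ is orthogonal, which is why the unit ball is invariant.
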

\begin{proof}
Since $a$ leaves invariant the unit ball, we have that $\varphi(a(x))= \varphi(x)$ and  the assertion
follows.
\end{proof}

In the next lemma, we deal with a non-diagonalizable Jordan block  corresponding to a real eigenvalue with modulus one.
\begin{lemma}\label{minimizeruni.44} Let $1 \leq p < \infty$, $\lambda \in \{ \pm 1 \}$ and $a: \mathbb{R}^d \to \mathbb{R}^d$, $a(x)=Ax$
be an invertible linear map such that the corresponding matrix
associated to the canonical basis is given by
\begin{equation}
\label{realJordan1}
A= \left (
\begin{array}{ccccccccc}
  \lambda & 1       &    &       \\
          &  \ddots &  & 1  \\
          &         &         & \lambda
\end{array}  \right ).
\end{equation}
Then there exists a sequence of bounded sets $ \{ E_j \}_{j=0}^{\infty} \subset \rr^d$ of positive measure such that
\begin{enumerate}
\item[(i)] $| E_j |_d = |E_0|_d$, $a(E_j) = E_{j+1}$  and $E_j \cap E_l= \emptyset,$ if $j,l \in \nn_0$, $j \neq  l$.
\item[(ii)] Given $n \in \nn$, if we choose
\[
\varphi_n (x)= \sum_{j=1}^{n}  \chi_{2^{-n}E_j}(x)
\]
then, the function $\varphi_n$ is  nonnegative, supported in the unit ball and moreover it belongs to
$L^p(\rr^d)$ with
\begin{eqnarray*}
\|\varphi_n \|^p_{L^p(\mathbb{R}^d)} = 2^{-nd}n
|E_0|_d .
\end{eqnarray*}
\item[(iii)] The sequence $\phi_n = \frac{\varphi_n}{\|\varphi_n \|_{L^p(\mathbb{R}^d)}}$, $n \in \nn$, satisfies
$$
\lim_{n \to \infty }\int_{\rr^d} |\phi_n(x) - \phi_n(a( x))|^p dx =0.
$$
\end{enumerate}
  \end{lemma}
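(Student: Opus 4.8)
The plan. Since $A$ is a single real Jordan block with $\lambda\in\{\pm1\}$, we have $|\det A|=|\lambda|^{d}=1$, so the quantity on the right of the target identity~\eqref{mini} equals $|1-|\det A|^{-1/p}|^{p}=0$, and it suffices to produce a sequence $\phi_n$, supported in $B_1$ with $\|\phi_n\|_{L^p(\rr^d)}=1$, for which $\int_{\rr^d}|\phi_n(x)-\phi_n(a(x))|^p\,dx\to0$. Moreover a non-diagonalizable block has size $d\ge2$, so the coordinate $x_{d-1}$ is available. Following the pattern of Lemma~\ref{minimizerexpansive}, the idea is to build one ``orbit'' $\{E_j\}$ of pairwise disjoint sets of equal positive measure with $a(E_j)=E_{j+1}$, to superpose the first $n$ rescaled copies, and to exploit that $\varphi_n-\varphi_n\circ a$ telescopes, leaving only two boundary pieces.

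\emph{Construction of the sets.} Fix a small $t>0$ to be chosen later in terms of $A$, and set
\[
E_0=\Big\{x\in\rr^d: t\le x_d\le \tfrac{3t}{2},\ 0\le x_{d-1}\le\tfrac{t}{2},\ |x_i|\le t\ \text{for }1\le i\le d-2\Big\},\qquad E_j=a^j(E_0).
\]
Then $a(E_j)=E_{j+1}$ is immediate and $|E_j|_d=|\det A|^{j}|E_0|_d=|E_0|_d>0$, which gives the measure part of~(i). For the disjointness, observe that on $E_0$ the last coordinate does not vanish and the ratio $\pi(x):=x_{d-1}/x_d$ lies in $[0,\tfrac12]$; a short computation with the Jordan block (the pair $(x_{d-1},x_d)$ forms a closed $2\times2$ subsystem at the bottom) shows $(A^jx)_d=\lambda^j x_d$ and $\pi(A^jx)=\pi(x)+\lambda j$, so $\pi(E_j)\subset \lambda j+[0,\tfrac12]$. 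These intervals are pairwise disjoint for distinct $j\in\nn_0$, hence so are the sets $E_j$, which finishes~(i).

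\emph{From the sets to $\varphi_n$.} Because $\lambda=\pm1$, the operator norm grows only polynomially, $\|A^j\|\le C_d(1+j)^{d-1}$, so $E_j\subset B_{C_d(1+j)^{d-1}r_0}$ where $r_0=O(t)$ is a radius containing $E_0$. Since $\sup_{n\ge1}2^{-n}(1+n)^{d-1}<\infty$, we may now fix $t$ small enough (depending only on $A$) that $2^{-n}E_j\subset B_1$ for every $1\le j\le n$ and every $n\ge1$. With $\varphi_n=\sum_{j=1}^n\chi_{2^{-n}E_j}$ this shows $\varphi_n$ is nonnegative and supported in the unit ball; and since the sets $2^{-n}E_j$ are disjoint with $|2^{-n}E_j|_d=2^{-nd}|E_0|_d$, we obtain $\|\varphi_n\|_{L^p(\rr^d)}^p=n\,2^{-nd}|E_0|_d$, i.e.\ (ii).

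\emph{Conclusion and main difficulty.} Since $a$ is linear, $a^{-1}(2^{-n}E_j)=2^{-n}E_{j-1}$, so $\varphi_n(a(x))=\sum_{j=0}^{n-1}\chi_{2^{-n}E_j}(x)$ and the difference telescopes: $\varphi_n(x)-\varphi_n(a(x))=\chi_{2^{-n}E_n}(x)-\chi_{2^{-n}E_0}(x)$. As $E_0\cap E_n=\emptyset$ for $n\ge1$, pointwise $|\varphi_n-\varphi_n\circ a|^p=\chi_{2^{-n}E_n}+\chi_{2^{-n}E_0}$, whose integral is $2\cdot2^{-nd}|E_0|_d$; dividing by $\|\varphi_n\|_{L^p}^p$ gives $\int_{\rr^d}|\phi_n-\phi_n\circ a|^p\,dx=2/n\to0$, which is~(iii), valid for every $1\le p<\infty$. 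The only real obstacle is the construction of the orbit $\{E_j\}$: unlike in Lemma~\ref{minimizerexpansive}, $a$ is not expansive, so forward iterates of a bounded set need not remain bounded; the point is that the shear structure forces them to drift off to infinity in the direction detected by $x_{d-1}/x_d$, which yields disjointness, while the drift is merely polynomial and hence harmless after the $2^{-n}$ rescaling.
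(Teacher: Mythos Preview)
Your argument is correct and follows the same strategy as the paper: construct an orbit $E_j=a^j(E_0)$ of pairwise disjoint equal-measure sets by exploiting the linear drift produced by the shear, superpose $n$ rescaled copies, and telescope to obtain $\int_{\rr^d}|\phi_n-\phi_n\circ a|^p\,dx=2/n\to0$. The only cosmetic difference is that the paper tracks the orbit of the point $(1,1,0,\dots,0)$ using balls (the first two coordinates drift while the remaining ones stay zero), whereas you use a box and the ratio $x_{d-1}/x_d$ coming from the invariant bottom $2\times2$ subblock; both devices deliver the required disjointness and polynomial growth.
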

\begin{proof}  Let us  start constructing a sequence of sets which satisfies (i). Given $j \in \nn_0$, $a^{j}(\mathbf{p})^t=(A^j\mathbf{p})^t= (\lambda^j + j \lambda^{j-1}, \lambda^j, 0, \dots, 0 )^t$ where $\mathbf{p}=(1, 1, 0, \dots, 0)\in \rr^d$. Observe that $a^j(\mathbf{p}) \neq a^l(\mathbf{p})$ if $l,j \in \mathbb{N}$ and $j \neq l$. Indeed $| a^j(\mathbf{p}) - a^l(\mathbf{p}) | \geq 1$ if $j \neq l$. Here and in what follows, we use the notation $|\cdot |$ ffor the Euclidean norm in $\mathbb{R}^d$. Thus,
$a^j(B_{1/4}(\mathbf{p})) \cap a^l(B_{1/4}(\mathbf{p})) = \emptyset$  if $j \neq l$, where $B_{1/4}(\mathbf{p})$ is the ball with center the point $\mathbf{p}$ and radius $1/4$.

For $j \in \nn_0$,  we take $E_j= a^{j}(2^{-3}B_{1/4}(\mathbf{p}))= a^{j}(B_{2^{-5}}(2^{-3}\mathbf{p}))$.
Then the sequence $\{ E_j \}_{j \in \nn}$ satisfies (i).

We now prove see (ii).  Given $n \in \mathbb{N}$, we check  that the function $\varphi_n$ is supported in the unit ball. If
$x$ is in the support of $\varphi_n$ then there exists $j \in
\{0,1,\dots, n\}$ such that $| x - a^j(2^{-n-3}\mathbf{p}) | \leq 2^{-n-5}$
and we have
$$ | x  |  \leq | x - a^j(2^{-n-3}\mathbf{p}) | + |  a^j(2^{-n-3} \mathbf{p}) | \leq  2^{-n-5} + 2^{-n-3}((1 + n )^2 + 1)^{1/2}
 \leq  2^{-n} n< 1.
$$
Further, using (i)
$$
\|\phi_n\|^p_{L^p(\mathbb{R}^d)}= \sum_{j=1}^{n}  |2^{-n}E_j|_d
= 2^{-nd}n|E_0|_d.
$$

We now prove (iii). Given $n \in \nn$, by (i) and (ii) we have
\begin{eqnarray*}
\int_{\rr^d} |\varphi_n(x) - \varphi_n(a( x))|^p dx &=&  \int_{\rr^d} \Big|\sum_{j=1}^n \chi_{2^{-n}E_j}(x) - \sum_{j=1}^n \chi_{2^{-n}E_j}(a(x))\Big|^p dx \\ &=&  2^{-nd} |E_n |_d + 2^{-nd} |E_0 |_d= {2^{-nd+1} |E_0|_d}.
\end{eqnarray*}
Thus, $\phi_n$ satisfies
\begin{eqnarray*}
\lim_{n \to \infty} \int_{\rr^d} |\phi_n(x) - \phi_n(a( x))|^p dx = \lim_{n \to \infty}  \frac{2}{ n} =0,
\end{eqnarray*}
as we wanted to prove.
\end{proof}

In the next lemma, we deal with a non-diagonalizable Jordan block  corresponding to a complex eigenvalue with modulus one.
\begin{lemma}\label{minimizeruni.444} Let $1 \leq p < \infty$,  $n \in \nn$ and $a: \mathbb{R}^d \to \mathbb{R}^d$, $a(x)=Ax$
with
\begin{equation} \label{complexJordan1}
A= \left (
\begin{array}{ccccccccc}
  \mathbf{M} & \mathbf{I}       &    &       \\
          &  \ddots &  & \mathbf{I}  \\
          &         &         & \mathbf{M}
\end{array}  \right ),
\end{equation}
where  $\theta \in \mathbb{R} \setminus \{ \pi k\}_{k \in \mathbb{Z}}$, $\mathbf{M}
= \left (
\begin{array}{ccccccccc}
  \cos \theta & \sin \theta       \\
    -\sin \theta      &  \cos \theta
\end{array}  \right )
$ and $\mathbf{I} = \left (
\begin{array}{ccccccccc}
  1 & 0       \\
    0      &  1
\end{array}  \right )$.
For any positive integer $n$  there exists a finite sequence of bounded sets $\{ E_j^{(n)} \}_{j=0}^{n} \subset \rr^d$ of positive measure such that
\begin{enumerate}
\item[(i)] $| E_j^{(n)} |_d = |E_0^{(n)}|_d
$,  $a(E_j^{(n)}) = E_{j+1}^{(n)}$  and
$
E_j^{(n)} \cap E_l^{(n)}= \emptyset,$ if $j,l \in \{0,1,\dots, n\}$, $j \neq  l$.
\item[(ii)] For any positive integer $n$, $\varphi_n$ defined by
$$
\varphi_n (x)= \sum_{j=1}^{n}  \chi_{2^{-n}E_j^{(n)}}(x), \qquad n \in \nn.
$$
is  nonnegative and  supported in the unit ball. Moreover it belongs to
$L^p(\rr^d)$ and
\begin{eqnarray*}
\|\varphi_n \|^p_{L^p(\mathbb{R}^d)} = 2^{-nd}n
|E_0^{(n)}|_d .
\end{eqnarray*}
\item[(iii)] The sequence  $\phi_n = \frac{\varphi_n}{\|\varphi_n \|_{L^p(\mathbb{R}^d)}}$ verifies
$$
\lim_{n \to \infty }\int_{\rr^d} |\phi_n(x) - \phi_n(a( x))|^p dx =0.
$$
\end{enumerate}
  \end{lemma}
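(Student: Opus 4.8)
The plan is to imitate the construction in Lemma~\ref{minimizeruni.44}, but along the orbit of a suitable vector under the rotation-type map $A$. Since $|\det A|=1$ here, the right-hand side of \eqref{mini} is $|1-1|^p=0$, so we must produce functions $\phi_n$, supported in $B_1$, with $\|\phi_n\|_{L^p}=1$ and $\int_{\rr^d}|\phi_n(x)-\phi_n(a(x))|^p\,dx\to 0$. First I would fix a vector $\mathbf{p}\in\rr^d$ whose first two coordinates are $(1,0)$ so that the top $2\times2$ block acts on it, and observe that the action of $A^j$ on $\mathbf{p}$ moves the ``complex part'' by a rotation of angle $j\theta$ while pushing mass up the off-diagonal chain of $\mathbf I$'s; the key estimate is $|A^j\mathbf{p}|\le C(1+j)^{m}$ for some fixed power $m$ depending only on the block size, because each application of $A$ adds at most a bounded polynomial factor (the rotation part has norm $1$ and the nilpotent part contributes polynomially). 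I would also check the separation $|A^j\mathbf{p}-A^l\mathbf{p}|\ge c>0$ for $j\ne l$ in the relevant range $0\le j,l\le n$: this is where the hypothesis $\theta\notin\pi\zz$ enters, since it forces the ``rotated'' coordinates to be genuinely different (if the nilpotent part happens to coincide, the rotation distinguishes the points; if the rotation nearly repeats, the nilpotent growth distinguishes them).

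Granting these two facts, I would set $E_j^{(n)}=a^{j}\big(B_{\rho_n}(\mathbf{p})\big)$ with a radius $\rho_n$ chosen small enough (a fixed constant works, but shrinking it makes the support claim transparent) so that the balls $E_0^{(n)},\dots,E_n^{(n)}$ are pairwise disjoint, and then rescale by $2^{-n}$ exactly as in Lemma~\ref{minimizeruni.44}: since $|A^j\mathbf{p}|$ grows only polynomially in $j$, the union $\bigcup_{j=1}^n 2^{-n}E_j^{(n)}$ is contained in a ball of radius $\lesssim 2^{-n}\,\mathrm{poly}(n)$, which is inside $B_1$ for $n$ large. Property (i) is immediate from $a(E_j^{(n)})=E_{j+1}^{(n)}$ and $|\det A|=1$, which gives $|E_j^{(n)}|_d=|E_0^{(n)}|_d$; property (ii) follows from disjointness exactly as before, giving $\|\varphi_n\|_{L^p}^p=2^{-nd}n|E_0^{(n)}|_d$. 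For (iii), the telescoping identity $\chi_{2^{-n}E_{j-1}^{(n)}}(a(x))=\chi_{2^{-n}E_j^{(n)}}(x)$ (valid for $1\le j\le n$) shows that $\varphi_n(x)-\varphi_n(a(x))=\chi_{2^{-n}E_1^{(n)}}(x)-\chi_{2^{-n}E_{n+1}^{(n)}}(x)$ up to the disjointness bookkeeping, hence
$$
\int_{\rr^d}|\varphi_n(x)-\varphi_n(a(x))|^p\,dx = 2^{-nd}|E_{n+1}^{(n)}|_d+2^{-nd}|E_1^{(n)}|_d = 2^{-nd+1}|E_0^{(n)}|_d,
$$
and dividing by $\|\varphi_n\|_{L^p}^p$ gives $\int_{\rr^d}|\phi_n(x)-\phi_n(a(x))|^p\,dx = 2/n\to 0$.

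The main obstacle is the geometric input: verifying that the orbit points $A^j\mathbf{p}$ are uniformly separated for $0\le j\le n$ \emph{and} stay within a polynomially growing ball, simultaneously and uniformly enough that the single rescaling factor $2^{-n}$ suffices to fit everything into $B_1$. One has to be a little careful because the block is not a pure rotation: the presence of the $\mathbf I$'s between the $\mathbf M$'s means $A^j$ is a product of a block-diagonal rotation matrix and a unipotent matrix, so $A^j\mathbf{p}$ has entries that are polynomials in $j$ times trigonometric factors in $j\theta$. The separation argument splits into the case where the nilpotent (polynomial) parts of $A^j\mathbf{p}$ and $A^l\mathbf{p}$ differ — then they are separated by at least a fixed gap coming from integrality of the leading polynomial coefficients — and the case where they agree, forcing $j=l$ in the chain beyond the top block, so that the difference lives in the top $2\times2$ block and is a nonzero rotation difference, bounded below using $\theta\notin\pi\zz$. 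Once this separation-plus-growth lemma is in hand, the rest is a verbatim repetition of the proof of Lemma~\ref{minimizeruni.44}.
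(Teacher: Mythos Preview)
Your overall strategy matches the paper's: build the sets $E_j^{(n)}$ as images of a small ball under $a^j$, rescale by $2^{-n}$ to fit in $B_1$, and use the telescoping in (iii) to get the $2/n$ decay. The computations in (ii) and (iii) are fine (your indices in the telescoping are shifted by one---the difference is $\chi_{2^{-n}E_n}-\chi_{2^{-n}E_0}$, not $\chi_{2^{-n}E_1}-\chi_{2^{-n}E_{n+1}}$---but the outcome is the same).

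Where you diverge from the paper is in the geometric step, and there the paper's route is both simpler and avoids a real difficulty in your sketch. You try to prove a \emph{uniform} lower bound $|A^j\mathbf p - A^l\mathbf p|\ge c>0$, and your case analysis leans on ``$\theta\notin\pi\zz$'' to bound rotation differences from below. But $\theta\notin\pi\zz$ only says the eigenvalue is genuinely complex; it does \emph{not} exclude $\theta$ being a rational multiple of $\pi$, in which case $\mathbf M$ has finite order and rotation differences can vanish. Relatedly, if your $\mathbf p$ lives only in the top $2\times 2$ block (which ``first two coordinates $(1,0)$'' suggests), the orbit is a pure rotation and may be periodic, killing disjointness entirely. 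The ``integrality of leading polynomial coefficients'' remark also does not apply here, since the coefficients carry factors $\cos(j\theta),\sin(j\theta)$.

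The paper sidesteps all of this. It chooses $\mathbf q=(1,1,1,1,0,\dots,0)$ so that the second $2\times2$ block is engaged; then the off-diagonal $\mathbf I$'s force the first block of $A^j\mathbf q$ to grow linearly in $j$, so the orbit is unbounded and in particular $a^j(\mathbf q)\neq a^l(\mathbf q)$ for $j\neq l$. Crucially, the paper does \emph{not} seek a uniform separation constant: for each fixed $n$ it simply invokes continuity to find a radius $r=r(n)\in(0,1)$ so that the images $a^j(B_{r(n)}(\mathbf q))$, $0\le j\le n$, are pairwise disjoint. The polynomial growth bound $|a^j(2^{-4}\mathbf q)|\lesssim j$ then gives the support-in-$B_1$ claim after the $2^{-n}$ rescaling. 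So: keep your scheme, but replace the uniform-separation argument by ``distinct orbit points $+$ continuity $\Rightarrow$ $n$-dependent radius'', and choose the base point with a nonzero component in the second $2\times2$ block so the nilpotent part is genuinely activated.
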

\begin{proof}  Fix $n \in \nn$.
Let us start the proof constructing $\{ E_j^{(n)} \}_{j=0}^{n}$ a sequence of bounded sets  satisfying (i). Denote  $\mathbf{q} =(1,1,1,1,0\dots 0)$. This selection can be done only if we have at least two blocks. Note that $d\geq 4$.
 Then, for any $j \in \nn $,
$$
 a^{j}(\mathbf{q}) = \left (
\begin{array}{ccccccccc}
 \cos (j\theta) +\sin (j\theta) + j \cos ((j-1)\theta) +j\sin ((j-1)\theta)
 \\ \cos (j\theta) -\sin (j\theta) + j \cos ((j-1)\theta) -j\sin ((j-1)\theta)      \\
         \cos (j\theta) +\sin (j\theta)  \\
         \cos (j\theta) -\sin (j\theta) \\
         0 \\
         \cdots \\
         0
\end{array}  \right ).
$$
Observe that
 $a^{j}(\mathbf{q}) \neq \mathbf{q}$ if $j \in \nn$. So $a^j(\mathbf{q}) \neq
a^l(\mathbf{q})$ if $l,j \in  \{ 0, \dots, n \}$ and $j \neq l$.
Thus,  by the continuity of the linear map $a$,  there exists $ 0< r=r(n) < 1$ and  $B_{r(n)}(\mathbf{q}) \subset
\rr^d$, a ball with the center at  the point $\mathbf{q}$ and radius $r$,
such that $a^j(B_{r(n)}(\mathbf{q})) \cap a^l(B_{r(n)}(\mathbf{q})) = \emptyset$  if
$j,l \in \{ 0,1, \dots, n \}$, $j \neq l$.

For $j \in \{0,1,\cdots \} $,  we take $E_j= a^{j}(2^{-4}B_{r(n)}(\mathbf{q}))= a^{j}(B_{2^{-4}r(n)}(2^{-4}\mathbf{q}))$,
then the sequence of set $\{ E_j^{(n)} \}_{j =0}^{\infty}$ satisfies (i).

We now prove (ii).  Given $n \in \mathbb{N}$, we check  that the function $\varphi_n$,
with the sets $E_j$ defined in the previous case, is supported in the unit ball. If $x$ is in the support of
$\varphi_n$ then there exists $j \in \{1,\dots, n\}$ such that $|
x - 2^{-n}a^j(2^{-4}\mathbf{q}) | \leq 2^{-n-4}$ and we have
\begin{eqnarray*}
 | x  | & \leq& | x - 2^{-n}a^j(2^{-4}\mathbf{q}) | +
|  2^{-n}a^j(2^{-4}\mathbf{q}) | \\
 & \leq & 2^{-n-4} + 2^{-n-4}(2(2 + 2j)^2 + 2^3 )^{1/2}
 \leq  2^{-n}n  < 1.
 \end{eqnarray*}
Further, using (i) we obtain
$$
\|\varphi_n\|^p_{L^p(\mathbb{R}^d)}= \sum_{j=0}^{n}  |2^{-n}E_j^{(n)}|_d = 2^{-nd}n|E_0^{(n)}|_d.
$$

We now prove (iii). Let  $n \in \nn$, by (i) and (ii) we have then
\begin{eqnarray*}
\int_{\rr^d} |\phi_n(x) - \phi_n(a( x))|^p dx &=& \frac{1}{\|\varphi_n\|^p_{L^p(\mathbb{R}^d)}} \int_{\rr^d} |\sum_{j=0}^n \chi_{a^j(2^{-n}B)}(x) - \sum_{j=0}^n \chi_{a^j(2^{-n})}(a(x))|^p dx \\ &=&  \frac{1}{\|\varphi_n\|^p_{L^p(\mathbb{R}^d)}}2^{-nd+1 }|E_0^{(n)}|_d
\end{eqnarray*}
and the conclusion  follows.
\end{proof}

For the proof of Lemma \ref{por.arriba}, we also need the following three lemmas where the  matrix of the linear map $a$ contains several Jordan blocks.

In the next lemma we combine two blocks (not necessarily Jordan blocks), one of then is expansive and the other is contractive.
\begin{lemma} \label{lemma:expansivecontractive}
Let $1 \leq p < \infty$, $d, d_1, d_2 \in \nn$ such that $d = d_1 + d_2$. Moreover let  $a: \mathbb{R}^d \to \mathbb{R}^d$, $a(x)=Ax$
be an invertible linear map such that the corresponding matrix
associated to the canonical basis is given by
$$
A= \left (
\begin{array}{ccccccccc}
  P & 0       \\
    0      &  Q
\end{array}  \right ),
$$
where $P$ is a $d_1 \times d_1$ expansive matrix and $Q^{-1}$ is a $d_2 \times d_2$ expansive matrix. Let $\{ \varphi_n \}_{n \in \nn} $ be the sequence of functions   defined on $\rr^{d_1}$ as in Lemma \ref{minimizerexpansive} when the dilation is given by $P$. Moreover, let $\{ \theta_n \}_{n \in \nn} $ be the sequence of functions   defined on $\rr^{d_2}$ as in Lemma \ref{minimizercontractive} when the dilation is given by $Q$. Then
$$
\lim_{n \to \infty }\int_{\rr^{d_1}} \int_{\rr^{d_2}}|\phi_n(x)\varrho_n(y) - \phi_n(P( x)) \varrho_n(Q(y))|^p \, dx dy =\left| 1- |\det(A)|^{-1/p} \right|^p,
$$
 where $\phi_n = \frac{\varphi_n}{\|\varphi_n \|_{L^p(\mathbb{R}^{d_1})}}$ and  $ \varrho_n = \frac{\theta_n}{\|\theta_n \|_{L^p(\mathbb{R}^{d_2})}}$
\end{lemma}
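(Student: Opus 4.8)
The plan is to take the tensor product $\Phi_n(x,y)=\phi_n(x)\varrho_n(y)$ as a test function and compute the relevant integral explicitly, exploiting that the matrix $A$ is block diagonal. First I would recall the explicit structure of the building blocks. By Lemma \ref{minimizerexpansive}, $\varphi_n=\sum_{i\ge 0}\sigma_n^i\chi_{E_i}$ with $P^{-1}(E_i)=E_{i+1}$, $|E_i|_{d_1}=|\det P|^{-i}|E_0|_{d_1}$, the $E_i$ pairwise disjoint in $B_1\subset\rr^{d_1}$, and $\sigma_n=|\det P|^{1/p}-\tfrac1n$; by Lemma \ref{minimizercontractive}, $\theta_n=\sum_{j\ge 0}\gamma_n^j\chi_{G_j}$ with $Q(G_j)=G_{j+1}$, $|G_j|_{d_2}=|\det Q|^{j}|G_0|_{d_2}$, the $G_j$ pairwise disjoint in $B_1\subset\rr^{d_2}$, and $\gamma_n=|\det Q|^{-1/p}-\tfrac1n$. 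The two computations I need are
$$
\varphi_n(Px)=\sum_{i\ge 1}\sigma_n^{\,i-1}\chi_{E_i}(x),\qquad
\theta_n(Qy)=\chi_{G_{-1}}(y)+\sum_{j\ge 0}\gamma_n^{\,j+1}\chi_{G_j}(y),
$$
where $G_{-1}:=Q^{-1}(G_0)$ is a bounded set with $|G_{-1}|_{d_2}=|\det Q|^{-1}|G_0|_{d_2}$, disjoint from every $G_j$, $j\ge 0$ (this is immediate from the mirror construction $G_l=Q^l(F)\setminus Q^{l+1}(F)$).

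Second, since both $\varphi_n\otimes\theta_n$ and the pulled-back function $(x,y)\mapsto\varphi_n(Px)\theta_n(Qy)$ are constant on each product cell $E_i\times G_j$ (and on each $E_i\times G_{-1}$), and these cells are pairwise disjoint and cover the union of the two supports, the integral $\int_{\rr^{d_1}}\int_{\rr^{d_2}}|\varphi_n(x)\theta_n(y)-\varphi_n(Px)\theta_n(Qy)|^p\,dx\,dy$ splits into a sum over cells, of which only three families contribute: on $E_0\times G_j$ ($j\ge 0$) the pulled-back term vanishes, giving $\sum_{j\ge 0}\gamma_n^{jp}|E_0|_{d_1}|G_j|_{d_2}=|E_0|_{d_1}\|\theta_n\|_{L^p}^p$; on $E_i\times G_{-1}$ ($i\ge 1$) the first term vanishes and the contribution is $|G_{-1}|_{d_2}|\det P|^{-1}\|\varphi_n\|_{L^p}^p$; and on $E_i\times G_j$ ($i\ge 1,\,j\ge 0$) the difference equals $\sigma_n^{\,i-1}\gamma_n^{\,j}(\sigma_n-\gamma_n)$, giving $|\sigma_n-\gamma_n|^p|\det P|^{-1}\|\varphi_n\|_{L^p}^p\|\theta_n\|_{L^p}^p$ after summing the two geometric series (using $\|\varphi_n\|_{L^p}^p=\sum_i\sigma_n^{ip}|E_i|_{d_1}$ and $\|\theta_n\|_{L^p}^p=\sum_j\gamma_n^{jp}|G_j|_{d_2}$).

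Third, dividing by $\|\varphi_n\|_{L^p}^p\|\theta_n\|_{L^p}^p$ yields
$$
\int_{\rr^{d_1}}\int_{\rr^{d_2}}|\phi_n(x)\varrho_n(y)-\phi_n(Px)\varrho_n(Qy)|^p\,dx\,dy
=\frac{|E_0|_{d_1}}{\|\varphi_n\|_{L^p}^p}+\frac{|G_{-1}|_{d_2}|\det P|^{-1}}{\|\theta_n\|_{L^p}^p}+|\det P|^{-1}|\sigma_n-\gamma_n|^p .
$$
Letting $n\to\infty$, the norm formulas of Lemmas \ref{minimizerexpansive}(ii) and \ref{minimizercontractive}(ii) give $\|\varphi_n\|_{L^p}^p\to\infty$ and $\|\theta_n\|_{L^p}^p\to\infty$, so the first two terms vanish, while $\sigma_n\to|\det P|^{1/p}$ and $\gamma_n\to|\det Q|^{-1/p}$. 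Hence the limit equals $|\det P|^{-1}\big||\det P|^{1/p}-|\det Q|^{-1/p}\big|^p=\big|1-|\det P\,\det Q|^{-1/p}\big|^p=\big|1-|\det A|^{-1/p}\big|^p$, using $\det A=\det P\,\det Q$, which is the claim.

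The main obstacle is bookkeeping rather than analysis: one must set up the cell decomposition so as to track precisely the cells where the supports of $\varphi_n\otimes\theta_n$ and of its pullback fail to coincide — in particular identifying the extra set $G_{-1}=Q^{-1}(G_0)$ produced by the contracting block $Q$ — and then check that the two ``boundary'' contributions, on $E_0\times G_j$ and on $E_i\times G_{-1}$, are of lower order once divided by the product of the two diverging norms, so that only the bulk term $|\det P|^{-1}|\sigma_n-\gamma_n|^p$ survives in the limit.
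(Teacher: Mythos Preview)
Your proof is correct. Both you and the paper start from the same explicit series for $\varphi_n$, $\theta_n$ and the key auxiliary set $G_{-1}=Q^{-1}(G_0)$, but the arguments diverge after that. The paper rewrites $\varphi_n(Px)\theta_n(Qy)$ algebraically in terms of $\varphi_n(x)$, $\theta_n(y)$, $\chi_{E_0}$ and $\chi_{G_{-1}}$, splits the integral into a piece over $G_{-1}$ and its complement, and then sandwiches the remaining piece between a $\limsup$ and a $\liminf$ via two applications of Minkowski's inequality to isolate the dominant term $(1-\gamma_n/\sigma_n)\varphi_n(x)\theta_n(y)$. You instead exploit directly that both functions are constant on each disjoint product cell $E_i\times G_j$ (including $E_i\times G_{-1}$), sum the resulting geometric series, and obtain an exact closed-form expression for the normalized integral at every $n$; the limit then falls out with no inequalities needed. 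Your route is shorter and more transparent, and has the added benefit of giving the exact pre-limit value, whereas the paper's Minkowski sandwich only controls the limit. One small point of bookkeeping worth making explicit in your write-up is that the cell $E_0\times G_{-1}$ contributes zero (since $\varphi_n(x)\theta_n(y)=1\cdot 0=0$ and $\varphi_n(Px)\theta_n(Qy)=0\cdot 1=0$ there), confirming that your three families exhaust the nonzero contributions.
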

\begin{remark}
Note that here the product $\psi_n\varphi_n$ does not have the support in the unit ball. The support is in $|(x,y)|\leq 2$ and a change of variable gives us the right support.
\end{remark}

\begin{proof}
For $n \in \mathbb{N}$, we choose $\varphi_n (x)= \sum_{j=0}^{\infty} (\sigma_n)^j \chi_{E_j}(x)$, where $\sigma_n= |\det (P)|^{1/p} - \frac{1}{n}$ and  $ \{ E_j\}_{j=0}^{\infty}$  as  in Lemma \ref{minimizerexpansive}. Also  $\theta_n (y)= \sum_{j=0}^{\infty} (\gamma_n)^j \chi_{G_j}(y)$, where $\gamma_n= |\det (Q)|^{-1/p} - \frac{1}{n}$ and  $ \{ G_j\}_{j=-1}^{\infty}$ as in Lemma \ref{minimizercontractive}.

Using $P^{-1}(E_j)= E_{j+1}$ and $Q(G_j)= G_{j+1}$, we have
$$
\begin{array}{l}
 \displaystyle I_n  : = \int_{\rr^{d_1}} \int_{\rr^{d_2}} \Big|\varphi_n(x)\theta_n(y) - \varphi_n(P( x)) \theta_n(Q(y))\Big|^p \, dx dy \\[8pt]
 \displaystyle = \int_{\rr^{d_1}} \int_{\rr^{d_2}} \Big| \Big(\sum_{j=0}^{\infty} \sigma_n^{j} \chi_{E_j} (x)\Big) \Big(\sum_{l=0}^{\infty} \gamma_n^l \chi_{G_l}(y)\Big)  -  \Big(\sum_{j=1}^{\infty} \sigma_n^{j-1} \chi_{E_j} (x)\Big) \big(\sum_{l=-1}^{\infty} \gamma_n^{l+1} \chi_{G_l}(y)\Big) \Big|^p  dx dy \\[8pt]
 \displaystyle = \int_{\rr^{d_1}} \int_{\rr^{d_2}}  \Big| \varphi_n(x)\theta (y) -\Big( \frac {\varphi_n(x)}{\sigma _n} -\chi_{E_0}(x)\Big)
 \Big(\gamma_n\theta _n(y)-\chi_{G^{-1}}(y)\Big)
  \Big|^p  dxdy\\[8pt]
  \displaystyle = \int_{\rr^{d_1}} \int_{\rr^{d_2}}  \Big| \varphi_n(x)\theta_n (y) \big(1-\frac {\gamma_n}{\sigma_n}\big)
  +\gamma_n\chi_{E_0}(x) \theta_n(y)-\Big(\frac{ \varphi_n(x)}{\sigma_n}-\chi_{E_0}(x)\Big)
  \chi_{G_{-1}}(y) \Big|^p  dxdy
\end{array}
$$
where $G_{-1}= Q^{-1}G_0$.
Since $G_l \cap G_j = \emptyset$,   if $l \neq j$, $G_{-1}$ is also disjoint from $G_l$, $l \neq -1$, and $|G_{-1}|_{d_2}=|\det Q|^{-1} |G_0|_{d_2}$ then
\begin{align*}
I_n=& \int_{\rr^{d_1}} \int_{\rr^{d_2}}  \Big| \varphi_n(x)\theta_n (y) \big(1-\frac {\gamma_n}{\sigma_n}\big)
  +\gamma_n\chi_{E_0}(x) \theta_n(y) \Big|^p  dxdy\\
  &\qquad +
   |G_{-1}|_d  \int_{\rr^{d_1}}\Big| \frac{ \varphi_n(x)}{\sigma_n}-\chi_{E_0}(x)
  \Big|^p  dx\\
  :=& I_{n1}+I_{n2}.
\end{align*}
Since $\sigma_n$ is bounded and $\|\theta_n\|_{L^p(\rr^{d_2})}\rightarrow \infty$ we have that
$$
\lim _{n\rightarrow \infty}\frac{I_{n2}}{\|\varphi_n\|^p_{L^p(\rr^{d_1})}\|\theta_n\|^p_{L^p(\rr^{d_2})}}=0.
$$
For the first term we observe that, since $\|\varphi_n\|_{L^p(\rr^{d_1})}\rightarrow \infty$ we have that
$$
\lim _{n\rightarrow \infty} \dfrac{\displaystyle{\int_{\rr^{d_1}} \int_{\rr^{d_2}}  |\gamma_n\chi_{E_0}(x) \theta_n(y)|^p \, dxdy}}{\|\varphi_n\|^p_{L^p(\rr^{d_1})}\|\theta_n\|^p_{L^p(\rr^{d_2})}}=0.
$$
Then
\begin{eqnarray*}
 & & \limsup_{n\rightarrow \infty}\frac{I^{1/p}_{n1}}{\|\varphi_n\|_{L^p(\rr^{d_1})}\|\theta_n\|_{L^p(\rr^{d_2})}} \\ & & \leq
 \limsup_{n\rightarrow \infty}\frac{1}{\|\varphi_n\|_{L^p(\rr^{d_1})}\|\theta_n\|_{L^p(\rr^{d_2})}}  \left( \int_{\rr^{d_1}} \int_{\rr^{d_2}}  \Big| \varphi_n(x)\theta_n (y) \big(1-\frac {\gamma_n}{\sigma_n}\big)
  \Big|^p  dxdy \right)^{1/p} \\ \qquad & &  \qquad + \lim_{n\rightarrow \infty}\frac{1}{\|\varphi_n\|_{L^p(\rr^{d_1})}\|\theta_n\|_{L^p(\rr^{d_2})}} \left( \int_{\rr^{d_1}} \int_{\rr^{d_2}}  \Big| \gamma_n\chi_{E_0}(x) \theta_n(y) \Big|^p  dxdy \right)^{1/p}.
\end{eqnarray*}
Thus, using that $(1-\gamma_n/\sigma_n) \rightarrow (1-|\det A|^{-1/p})$ we obtain that
$$\limsup_{n\rightarrow \infty}\frac{I^{1/p}_{n1}}{\|\varphi_n\|^p_{L^p(\rr^{d_1})}\|\theta_n\|^p_{L^p(\rr^{d_2})}} \leq  1-|\det A|^{-1/p}.
$$
Moreover,
\begin{eqnarray*}
 & & \liminf_{n\rightarrow \infty}\frac{I^{1/p}_{n1}}{\|\varphi_n\|_{L^p(\rr^{d_1})}\|\theta_n\|_{L^p(\rr^{d_2})}} \\ & &\geq
 \liminf_{n\rightarrow \infty}\frac{1}{\|\varphi_n\|_{L^p(\rr^{d_1})}\|\theta_n\|_{L^p(\rr^{d_2})}}  \left( \int_{\rr^{d_1}} \int_{\rr^{d_2}}  \Big| \varphi_n(x)\theta_n (y) \big(1-\frac {\gamma_n}{\sigma_n}\big)
  \Big|^p  dxdy \right)^{1/p} \\ \qquad & & \qquad  - \lim_{n\rightarrow \infty}\frac{1}{\|\varphi_n\|_{L^p(\rr^{d_1})}\|\theta_n\|_{L^p(\rr^{d_2})}} \left( \int_{\rr^{d_1}} \int_{\rr^{d_2}}  \Big| \gamma_n\chi_{E_0}(x) \theta_n(y) \Big|^p  dxdy \right)^{1/p}.
\end{eqnarray*}
As before we get
$$\liminf_{n\rightarrow \infty}\frac{I^{1/p}_{n1}}{\|\varphi_n\|^p_{L^p(\rr^{d_1})}\|\theta_n\|^p_{L^p(\rr^{d_2})}} \geq  1-|\det A|^{-1/p}.
$$
and the proof is finished.
\end{proof}
In the next lemma we combine two blocks (not necessarily Jordan blocks), one of them correspond to  non-diagonalizable Jordan blocks with the eigenvalues of modulus one, while the other is an arbitrary invertible matrix. Indeed this lemma shows that we can simplify our computations neglecting the first block.
\begin{lemma} \label{lemma:1otro}
Let $1 \leq p < \infty$, $d, d_1, d_2 \in \nn$ such that $d = d_1 + d_2$. Moreover let  $a: \mathbb{R}^d \to \mathbb{R}^d$, $a(x)=Ax$
be an invertible linear map such that the corresponding matrix
associated to the canonical basis is given by
$$
 A= \left (
\begin{array}{ccccccccc}
  P & 0       \\
    0      &  Q
\end{array}  \right ),
$$
where $P$ is a $d_1 \times d_1$  matrix as \eqref{realJordan1} or \eqref{complexJordan1}  and $Q$ is a $d_2 \times d_2$ invertible matrix. Let $\{ \varphi_n \}_{n \in \nn} $ be the sequence of functions   defined on $\rr^{d_1}$ as in  Lemma \ref{minimizeruni.44} or Lemma \ref{minimizeruni.444} when the dilation is given by $P$. Moreover, let $\{ \theta_n \}_{n \in \nn} $ be a sequence of functions on $\rr^{d_2}$ such that $\|
\theta_n \|^p_{L^p(\rr^{d_2})}=1$. If $\phi_n = \frac{\varphi_n}{\|\varphi_n \|_{L^p(\mathbb{R}^{d_1})}}$, then
$$
\lim_{n \to \infty }\int_{\rr^{d_1}} \int_{\rr^{d_2}}|\phi_n(x)\theta_n(y) - \phi_n(P( x)) \theta_n(Q(y))|^p \, dx dy = \lim_{n \to \infty } \int_{\rr^{d_2}}|\theta_n(y) -  \theta_n(Q(y))|^p \,  dy
$$
if both previous limits exist.
\end{lemma}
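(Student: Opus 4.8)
The plan is to telescope the increment of the tensor product $\phi_n\otimes\theta_n$ under the block‑diagonal map $a=(P,Q)$ into a ``$\theta$‑part'' plus a ``$\phi$‑part'', and then to show that the $\phi$‑part is negligible in $L^p$, because the blocks of the form \eqref{realJordan1} or \eqref{complexJordan1} were constructed precisely so that $\phi_n$ is almost invariant under $P$. Set, for $(x,y)\in\rr^{d_1}\times\rr^{d_2}$,
$$
D_n(x,y):=\phi_n(x)\theta_n(y)-\phi_n(P(x))\theta_n(Q(y)),
$$
and decompose $D_n=A_n+B_n$ with
$$
A_n(x,y):=\phi_n(x)\bigl(\theta_n(y)-\theta_n(Q(y))\bigr),\qquad B_n(x,y):=\bigl(\phi_n(x)-\phi_n(P(x))\bigr)\theta_n(Q(y)),
$$
the identity $D_n=A_n+B_n$ being immediate after cancelling the common term $\phi_n(x)\theta_n(Q(y))$. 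By Minkowski's inequality in $L^p(\rr^{d_1}\times\rr^{d_2})$,
$$
\bigl|\,\|D_n\|_{L^p}-\|A_n\|_{L^p}\,\bigr|\le\|B_n\|_{L^p},
$$
so it suffices to identify $\|A_n\|_{L^p}^p$ and to prove $\|B_n\|_{L^p}\to0$.

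Since $A_n$ and $B_n$ are tensor products, Tonelli's theorem gives, using $\|\phi_n\|_{L^p(\rr^{d_1})}=1$,
$$
\|A_n\|_{L^p}^p=\Bigl(\int_{\rr^{d_1}}|\phi_n(x)|^p\,dx\Bigr)\Bigl(\int_{\rr^{d_2}}|\theta_n(y)-\theta_n(Q(y))|^p\,dy\Bigr)=\int_{\rr^{d_2}}|\theta_n(y)-\theta_n(Q(y))|^p\,dy,
$$
and, using also the change of variables $z=Q(y)$ (which in particular shows $\theta_n\circ Q\in L^p(\rr^{d_2})$, with $\|\theta_n\circ Q\|_{L^p}^p=|\det Q|^{-1}\|\theta_n\|_{L^p}^p=|\det Q|^{-1}$),
$$
\|B_n\|_{L^p}^p=\Bigl(\int_{\rr^{d_1}}|\phi_n(x)-\phi_n(P(x))|^p\,dx\Bigr)\,|\det Q|^{-1}.
$$
The factor in parentheses is exactly the quantity shown to tend to $0$ in part (iii) of Lemma~\ref{minimizeruni.44} (when $P$ has the form \eqref{realJordan1}) or of Lemma~\ref{minimizeruni.444} (when $P$ has the form \eqref{complexJordan1}); in fact it equals $2/n$. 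Hence $\|B_n\|_{L^p}\to0$.

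Combining the two displays, $\|D_n\|_{L^p}-\|A_n\|_{L^p}\to0$ while $\|A_n\|_{L^p}^p=\int_{\rr^{d_2}}|\theta_n(y)-\theta_n(Q(y))|^p\,dy$; since $t\mapsto t^p$ is continuous and the sequences involved are bounded, this gives
$$
\lim_{n\to\infty}\int_{\rr^{d_1}}\int_{\rr^{d_2}}|\phi_n(x)\theta_n(y)-\phi_n(P(x))\theta_n(Q(y))|^p\,dx\,dy=\lim_{n\to\infty}\int_{\rr^{d_2}}|\theta_n(y)-\theta_n(Q(y))|^p\,dy
$$
whenever one of the limits exists (the estimate then forces the other to exist and to coincide with it), which is the claim. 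There is no genuinely hard step here: the argument is an algebraic splitting followed by Minkowski's inequality and Tonelli's theorem, and the only nontrivial input is the near‑invariance $\int_{\rr^{d_1}}|\phi_n-\phi_n\circ P|^p\to0$ borrowed from Lemmas~\ref{minimizeruni.44} and~\ref{minimizeruni.444} — this is where the modulus‑one, non‑diagonalizable character of $P$ is actually used. The only point deserving a line of care is the finiteness of $\|\theta_n\circ Q\|_{L^p}$, which follows from the change of variables recorded above.
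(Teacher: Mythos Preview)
Your proof is correct and, in my view, cleaner than the paper's. The paper argues by unpacking the explicit structure of $\varphi_n$ as a sum of characteristic functions of disjoint sets $\{2^{-n}E_j^{(n)}\}$: after shifting the index in $\varphi_n(Px)$, the bulk of the sum (indices $1,\dots,n-1$) yields $\theta_n(y)-\theta_n(Qy)$ on each piece, while the two ``boundary'' sets $E_0^{(n)}$ and $E_n^{(n)}$ contribute an error of order $|E_0^{(n)}|_{d_1}(1+|\det Q|^{-1})/\|\varphi_n\|_{L^p}^p=O(1/n)$. Your route is more abstract: you telescope $D_n=A_n+B_n$, apply Minkowski (reverse triangle inequality), and factor each piece by Tonelli, so that the only input you need from Lemmas~\ref{minimizeruni.44}/\ref{minimizeruni.444} is their conclusion (iii), namely $\int|\phi_n-\phi_n\circ P|^p\to0$, rather than the internal set-theoretic structure. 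What this buys you is modularity and robustness: the argument would work unchanged for any sequence $\phi_n$ with $\|\phi_n\|_{L^p}=1$ and $\|\phi_n-\phi_n\circ P\|_{L^p}\to0$, whereas the paper's computation is tied to the particular $\varphi_n$. The paper's approach, on the other hand, avoids the small extra step of passing from $\|D_n\|_{L^p}-\|A_n\|_{L^p}\to0$ to $\|D_n\|_{L^p}^p-\|A_n\|_{L^p}^p\to0$ via boundedness (which you handle correctly, since $\|A_n\|_{L^p}\le 1+|\det Q|^{-1/p}$ by Minkowski).
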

\begin{proof}
For any $n \in \mathbb{N}$, $\varphi_n$ is given by  $\varphi_n(x)= \sum_{j=1}^{n} \ \chi_{2^{-n}E_j^{(n)}}(x)$ where  the sets  $ \{ E_j^{(n)}\}_{j=0}^{\infty}$  satisfy (i) in  Lemma \ref{minimizeruni.44} or in Lemma \ref{minimizeruni.444}. Thus, we have
\begin{eqnarray*}
 I_n &:= & \int_{\mathbb{R}^{d_1} } \int_{\mathbb{R}^{d_2} } | \phi_n(x)\theta_n(y) -\phi_n(Px)\theta_n(Q y)|^p\,
dxdy \\  & = & \frac{1}{\|
\varphi_n \|^p_{L^p(\rr^{d_1})}} \int_{\mathbb{R}^{d_1} } \int_{\mathbb{R}^{d_2} } | (\sum_{j=1}^{n} \ \chi_{2^{-n}E_j^{(n)}}(x))\theta_n(y) -(\sum_{j=1}^{n} \ \chi_{2^{-n}E_j^{(n)}}(Px))\theta_n(Q y)|^p\,
dxdy \nonumber \\  & = & \frac{1}{\|
\varphi_n \|^p_{L^p(\rr^{d_1})}} \int_{\mathbb{R}^{d_1} } \int_{\mathbb{R}^{d_2} } | (\sum_{j=1}^{n} \ \chi_{2^{-n}E_j^{(n)}}(x))\theta_n(y) -(\sum_{j=0}^{n-1} \ \chi_{2^{-n}E_j^{(n)}}(x))\theta_n(Q y)|^p\,
dxdy. \nonumber
  \end{eqnarray*}
Since $E_j^{(n)}  \cap E_l^{(n)} = \emptyset$ if $j \neq l$, then
\begin{equation} \label{aaaa1}
 I_n   \leq   \int_{\mathbb{R}^{d_2} } | \theta_n(y) -\theta_n(Q y)|^p\,
dy +  \frac{2^{-nd+1} }{\|
\varphi_n \|^p_{L^p(\rr^{d_1})}}   |E_0|_{d_1} (1 + |\det Q|^{-1}).
  \end{equation}
  Moreover,
  \begin{equation} \label{aaaa11}
 I_n   \geq   \int_{\mathbb{R}^{d_2} } | \theta_n(y) -\theta_n(Q y)|^p\,
dy -  \frac{2^{-nd+1} }{\|
\varphi_n \|^p_{L^p(\rr^{d_1})}}   |E_0|_{d_1} (1 + |\det Q|^{-1}).
  \end{equation}
  Finally, since $\|
\varphi_n \|^p_{L^p(\rr^{d_1})}= 2^{-nd} n |E_0|_{d_1}$, the last terms in  \eqref{aaaa1} and \eqref{aaaa11} go to zero and  the conclusion follows.
\end{proof}

The following lemma is analogous to the previous one but the first block is related with diagonalizable Jordan blocks with the eigenvalues of modulus one
\begin{lemma} \label{lemma:1111}
Let $1 \leq p < \infty$, $d, d_1, d_2 \in \nn$ such that $d = d_1 + d_2$. Moreover let  $a: \mathbb{R}^d \to \mathbb{R}^d$, $a(x)=Ax$
with
$$
 A= \left (
\begin{array}{ccccccccc}
  P & 0       \\
    0      &  Q
\end{array}  \right ),
$$
where $P$ is a $d_1 \times d_1$ invertible matrix as in \eqref{Jordandiagonalizable1}   and $Q$ is a $d_2 \times d_2$ invertible matrix. Let $\phi  = |B_1|^{-1/p} \chi_{B_1}$ where $B_1$ denotes the unitary ball on $\rr^{d_1}$. Moreover, let $ \theta $ be a function on $\rr^{d_2}$ such that $\|
\theta \|^p_{L^p(\rr^{d_2})}=1$. Then
$$
\int_{\rr^{d_1}} \int_{\rr^{d_2}}|\phi(x)\theta(y) - \phi(P( x)) \theta(Q(y))|^p \, dx dy =   \int_{\rr^{d_2}}|\theta(y) -  \theta(Q(y))|^p \,  dy.
$$
\end{lemma}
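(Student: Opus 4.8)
The plan is to reduce the double integral on $\rr^{d_1}\times\rr^{d_2}$ to the single integral on $\rr^{d_2}$, exploiting that the matrix $P$ in \eqref{Jordandiagonalizable1} is orthogonal. First I would record this orthogonality: $P$ is block diagonal, its $1\times1$ blocks are $\lambda_i\in\{\pm1\}$ (since $|\lambda_i|=1$ and $\lambda_i\in\rr$) and its $2\times2$ blocks $\mathbf{M}_i(\alpha_i,\beta_i)$ are rotation matrices because $\alpha_i^2+\beta_i^2=1$; hence $|Px|=|x|$ for every $x\in\rr^{d_1}$, exactly as in Lemma \ref{minimizeruni}. In particular $a$ maps the unit ball $B_1\subset\rr^{d_1}$ onto itself, so that $\phi(P(x))=|B_1|^{-1/p}\chi_{B_1}(P(x))=|B_1|^{-1/p}\chi_{B_1}(x)=\phi(x)$ for a.e.\ $x\in\rr^{d_1}$.

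Next I would substitute this identity into the integrand: for a.e.\ $(x,y)$,
$$
\phi(x)\theta(y)-\phi(P(x))\theta(Q(y))=\phi(x)\bigl(\theta(y)-\theta(Q(y))\bigr),
$$
so that $|\phi(x)\theta(y)-\phi(P(x))\theta(Q(y))|^p=|\phi(x)|^p\,|\theta(y)-\theta(Q(y))|^p$. Then, by Fubini's theorem and the normalization $\|\phi\|_{L^p(\rr^{d_1})}^p=|B_1|^{-1}\int_{\rr^{d_1}}\chi_{B_1}=1$,
$$
\int_{\rr^{d_1}}\int_{\rr^{d_2}}\bigl|\phi(x)\theta(y)-\phi(P(x))\theta(Q(y))\bigr|^p\,dx\,dy=\Bigl(\int_{\rr^{d_1}}|\phi(x)|^p\,dx\Bigr)\Bigl(\int_{\rr^{d_2}}|\theta(y)-\theta(Q(y))|^p\,dy\Bigr),
$$
which equals $\int_{\rr^{d_2}}|\theta(y)-\theta(Q(y))|^p\,dy$, the claimed identity.

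There is essentially no obstacle here: the only point that needs care is the orthogonality of $P$, hence $\phi\circ P=\phi$, which is precisely what was already used in Lemma \ref{minimizeruni}. This lemma plays the role of the ``discard the unitary diagonalizable block'' step in the tensor-product construction of the minimizing sequence for Lemma \ref{por.arriba}; together with Lemmas \ref{lemma:expansivecontractive} and \ref{lemma:1otro} it lets one split off that block without changing the value of the relevant integral.
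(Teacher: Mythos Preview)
Your proof is correct and follows exactly the paper's approach: the paper's entire argument is the single line ``Since $\phi(Px)=\phi(x)$, the statement follows,'' and you have simply spelled out why $P$ is orthogonal (hence $\phi\circ P=\phi$) and then carried out the resulting factorization and Fubini step explicitly.
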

\begin{proof}
 Since $\phi(Px)= \phi(x)$, the statement follows.
 \end{proof}
We are ready to prove Lemma \ref{por.arriba}.
\begin{proof}[Proof of Lemma \ref{por.arriba}]
We write the matrix $A$ using a Jordan decomposition \eqref{Jordan} with  $d_i \times d_i$ Jordan blocks $J_i$ as in
(\ref{realJordan}) or (\ref{complexJordan}).

If $J_i$ is expansive, we construct a sequence of functions  $\{ \phi_n^{(i)} \}_{n=1}^{\infty} $ defined on variables $(x_1^{(i)},\cdots, x_{d_i}^{(i)})$ as in Lemma \ref{minimizerexpansive}. If $J_i^{-1}$ is expansive then the sequence of functions is constructed as in Lemma \ref{minimizercontractive}. If $J_i$ is diagonalizable with the absolute values of the eigenvalues equals to $1$,  we take $\phi_n (x_1^{(i)},\cdots, x_{d_i}^{(i)}) = |B_1|^{1/p} \chi_{B_1}(x_1^{(i)},\cdots, x_{d_i}^{(i)})$ where $B_1$ is the unit ball on $\rr^{d_i}$. When  $J_i$ is not diagonalizable and the real or complex eigenvalues with absolute value $1$, then the sequence of functions is constructed as in Lemma \ref{minimizeruni.44} or Lemma \ref{minimizeruni.444}.

For $n \in \mathbb{N}$, we now choose
$$
\Psi_n(x_1^{(1)}, \dots, x_{d_1}^{(1)}, \dots \dots,
x_1^{(r+s)}, \dots, x_{d_{(r+s)}}^{(r+s)})= \prod_{i=1}^{r+s} \phi_n^{(i)}
(x_1^{(i)}, \dots, x_{d_i}^{(i)})
$$
 and
$$
\Phi_n(x)= d^{-d/2p}\| C^{-1} \|^{-d/p} | \det C |^{-1/p}
\Psi_n (d^{-1/2}\| C^{-1} \|^{-1} C^{-1} x),
$$
where  $\| C^{-1} \|$ denotes the norm of $C^{-1}$ as operator on
$\rr^d$. Observe that  $\Phi_n$ is supported in $B_{1}$, $\|
\Phi_n \|_{L^p(\rr^d)}=1$ and $ \Phi_n(x) $ is nonnegative.

We now check that $\{  \Phi_n \}$ defined above satisfies
\begin{equation}
\label{miniPhi}
\lim_{n \to \infty} \int_{\rr^d} |\Phi_n (x)-\Phi_n (a(x))|^p\,
dx = \left| 1 -  |\det(A)|^{-\frac{1}{p}} \right|^p.
\end{equation}

After the change of variable $d^{-1/2} \| C^{-1}
\|^{-1}C^{-1}x=y$,   we have
$$
\begin{array}{l}
\displaystyle \lim_{n \to \infty}\int_{\mathbb{R}^{d} }
\int_{\rr^d} |\Phi_n (x)-\Phi_n (a(x))|^p\,
dx \\[8pt]
\displaystyle  =  d^{-d/2}\| C^{-1} \|^{-d} | \det C |^{-1}
  \\[8pt] \displaystyle
\qquad   \qquad \times
\lim_{n \to \infty}\int_{\mathbb{R}^{d} } |\Psi_n (d^{-1/2}\| C^{-1} \|^{-1} C^{-1} x)-\Psi_n (d^{-1/2}\| C^{-1} \|^{-1} C^{-1} CJC^{-1}x)|^p\,
dx \\[8pt]
\displaystyle  =   \lim_{n \to \infty}\int_{\mathbb{R}^{d} } |\Psi_n (y)-\Psi_n (Jy)|^p\,
dy  \\[8pt]
\displaystyle  =  \lim_{n \to \infty}\int_{\mathbb{R}^{d_1} } \cdots \int_{\mathbb{R}^{d_{r+s}} } \left|\prod_{i=1}^{r+s}  \phi_n^{(i)} (z^{(i)})-\prod_{i=1}^{r+s} \phi_n^{(i)} (J_iz^{(i)})\right|^p\,
dz^{(1)}\cdots dz^{(r+s)}
 \end{array}
$$
where $z^{(i)}= (x_1^{(i)},\cdots, x_{d_i}^{(i)})$.

  We study the following cases.

 {\bf Case I.} {\em The matrix $A$ contains a single real Jordan block $J(\lambda)$.}

  If  $|\lambda|\neq 1$  then by Lemma \ref{minimizerexpansive} and Lemma \ref{minimizercontractive}, \eqref{miniPhi} holds. If $\lambda$ has the absolute value  equals to one, then the  conclusion  follows by Lemma \ref{minimizeruni}, Lemma \ref{minimizeruni.44} or Lemma \ref{minimizeruni.444}.

 {\bf Case II.} {\em  The matrix $A$  contains several real Jordan blocks.}

  Assume there exists a Jordan block   denoted $J_1(\lambda_1)$ that corresponds to the eigenvalue $\lambda_1$ with $|\lambda_1|=1$.  Then by   Lemma \ref{lemma:1otro} and Lemma \ref{lemma:1111},
   \begin{eqnarray*}
 & & \lim_{n \to \infty } \int_{\mathbb{R}^{d_1} } \cdots \int_{\mathbb{R}^{d_{r+s}} } \left|\prod_{i=1}^{r+s}  \phi_n^{(i)} (z^{(i)})-\prod_{i=1}^{r+s} \phi_n^{(i)} (J_iz^{(i)})\right|^p\,
dz^{(1)}\cdots dz^{(r+s)} \\
   & = & \lim_{n \to \infty } \int_{\mathbb{R}^{d_2} } \cdots \int_{\mathbb{R}^{d_{r+s}} } \left|\prod_{i=2}^{r+s}  \phi_n^{(i)} (z^{(i)})-\prod_{i=2}^{r+s} \phi_n^{(i)} (J_iz^{(i)})\right|^p\,
dz^{(2)}\cdots dz^{(r+s)}.
  \end{eqnarray*}
   Iterating the procedure for all Jordan block that corresponds to eigenvalues on the unit circle, we reduce the proof
  to the cases of Jordan blocks $J_i$ that are expansive or  $J_i^{-1}$ are expansive.
  Indeed, without loss in the generality we can assume that we have only two Jordan blocks, one  is expansive and the other has its inverse expansive. The proof finishes using Lemma~\ref{lemma:expansivecontractive}.
  \end{proof}

  Once we have constructed a minimizing sequence in Lemma \ref{por.arriba}, the proof of Theorem \ref{teo.cota.por.abajo} follows immediately.
\begin{proof}[Proof of Theorem \ref{teo.cota.por.abajo}]
By Lemma \ref{lemma.geq}, Lemma \ref{cota.por.arriba} and Lemma \ref{por.arriba} the statement follows.
\end{proof}

\section{Decay estimates for the evolution problem}
\label{Sect.estim.evol}
\setcounter{equation}{0}

Let us consider a nonnegative solution $u(x,t)$ to
\eqref{eq.parabolica}. Note that, since the kernel
is nonnegative, there is a comparison principle for this problem. Therefore, $- u^-(x,t) \leq u(x,t) \leq u^+ (x,t)$, where $u^-$ and $u^+$ are the solutions with initial condition the negative and the positive part of $u_0$ respectively. Hence, for the proof of Theorem \ref{teo.decaimiento.intro}, we may assume that the solutions are nonnegative.

\begin{proof}[Proof of Theorem~\ref{teo.decaimiento.intro}]
First, we assume that $r>1$ and $p>2$. Multiplying equation \eqref{eq.parabolica} by $u^{r-1} (x,t)$ and integrating on the $x$ variable, we obtain the following
\[
\begin{array}{l}
\displaystyle \frac1r \frac{d}{dt} \int_{\rr^d} u^{r} (x,t) \, dx  \displaystyle =
\int_{\rr^d} \int_{\rr^d} K(x,y) |u(y,t)- u(x,t)|^{p-2} (u(y,t)- u(x,t))  u^{r-1}(x,t)\, dy\, dx
\\[8pt]
 \displaystyle = - \frac12
\int_{\rr^d} \int_{\rr^d} K(x,y) |u(y,t)- u(x,t)|^{p-2} (u(y,t)- u(x,t))  (u^{r-1}(y,t) - u^{r-1}(x,t) )
\, dy\, dx.
\end{array}
\]
Now, we use that, for $p>2$ and $r>1$ there is a constant $\widetilde{C}(p,r)$ such that
$$
|u(y,t)- u(x,t)|^{p-2}\, (u(y,t)- u(x,t))\, (u^{r-1}(y,t) -
u^{r-1}(x,t)) \geq \widetilde{C}(p,r) \, |u^\alpha (y,t)- u^\alpha (x,t)|^{p},
$$
with
$$
\alpha  =\frac{r+p-2}{p}.
$$
Indeed, one can check that is a constant $\widetilde{C}(p,r)$ such that
$$
|1- z|^{p-2}\, (1- z)\, (1 -
z^{r-1}) \geq \widetilde{C}(p,r) \, |1- z^\alpha |^{p},
$$
for all $0 \leq z \leq  1$, because
$$
\lim_{z \to 1}\frac{|1- z|^{p-2}\, (1- z)\, (1 -
z^{r-1})}{|1- z^\alpha |^{p}} > 0.
$$

We obtain that, for some constant $C=C(p,r)$, the following holds:
\begin{align*}
\frac{d}{dt} \int_{\rr^d} u^{r} (x,t) \, dx & \leq - C \int_{\rr^d} \int_{\rr^d} K(x,y)\, |u^\alpha (y,t)- u^\alpha (x,t)|^{p} \, dy\, dx \\
& \leq - C \lambda_1\, (\rr^d) \int_{\rr^d} |u (x,t)|^{\alpha p}
\,  dx .
\end{align*}

Now we observe that for $p>2$, we have
$
\alpha p = r + p - 2 > r
$
and therefore we can use the interpolation inequality
$$
\| u (\cdot, t) \|_{L^r (\rr^d)} \leq \| u (\cdot, t) \|^\theta_{L^1 (\rr^d)}
\| u (\cdot, t) \|^{1-\theta}_{L^{\alpha p } (\rr^d)}
$$
with
$$
\frac{1}{r} = \theta + \frac{1-\theta}{\alpha p}
$$
that is,
$$
\theta = \frac{1}{r} \frac{\alpha p -r}{\alpha p -1}, \qquad 1-\theta = \frac{\alpha p}{r} \frac{r-1}{\alpha p -1}.
$$
Hence, using that the $L^1(\rr^d)$-norm of $u( \cdot, t)$ does not increase
$$
\| u (\cdot, t) \|_{L^1 (\rr^d)} \leq \| u_0 (\cdot) \|_{L^1 (\rr^d)}
$$
we obtain that there exists $k_1>0$ such that
$$
\frac{d}{dt} \| u (\cdot, t) \|^r_{L^r (\rr^d)} \leq - k_1
 \left( \| u (\cdot, t) \|^{r}_{L^r (\rr^d)} \right)^{\frac{\alpha
 p}{(1-\theta) r}}.
$$
Then,
$$
\| u (\cdot, t) \|^r_{L^r (\rr^d)} \leq k_2 \, t^{- \frac{(1-\theta)
r}{\alpha p - (1-\theta) r}},
$$
that is,
$$
\| u (\cdot, t) \|_{L^r (\rr^d)} \leq k_3 \, t^{-
\frac{r-1}{p -2}},
$$
for some $k_3>0$ depending on $u_0$, $p$, $r$ and $K$.

Now, for $r>1$ and $1\leq p \leq 2$, using the comparison principle for \eqref{eq.parabolica} (see
\cite[Theorem 6.37]{libro}), we have
$$
\| u (\cdot, t) \|_{L^\infty (\rr^d)} \leq \| u_0 (\cdot) \|_{L^\infty
(\rr^d)}.
$$
Hence, for any $q\geq p$ there exists a constant $\widetilde{C}=2\, \|u_0\|_{L^\infty(\rr^d)}^{{\alpha}(q-p)}$ such that
$$
\widetilde{C}\, |u^\alpha (y,t)- u^\alpha (x,t)|^{p} \geq |u^\alpha (y,t)-
u^\alpha (x,t)|^{q}.
$$
Therefore, let us choose $q$ such that
$
\alpha q = r
$
and perform the same computation as in the previous case. We obtain
$$
\begin{array}{rl}
\displaystyle \frac{d}{dt} \int_{\rr^d} u^{r} (x,t) \, dx &  \displaystyle
\leq - C
\int_{\rr^d} \int_{\rr^d} K(x,y) |u^\alpha (y,t)- u^\alpha (x,t)|^{q}
\, dy\, dx \\[8pt]
& \displaystyle \leq - \gamma
\int_{\rr^d} |u (x,t)|^{r}
\,  dx .
\end{array}
$$
Hence an exponential decay of $u$ in $L^r(\rr^d)$-norm follows
$$
\displaystyle \int_{\rr^d} u^r (x,t) \, dx \leq
\left(\int_{\rr^d}u^r(x,0) \, dx \right) \cdot e^{-\gamma t},
$$
for some $\gamma>0$ depending on $K$, $p$, $r$ and $u_0$.

Finally, if $r=1$ we just have to multiply by sgn$(u)$ and the proof follows similarly as in the previous cases.
\end{proof}

\section{The case $p=\infty$}
\label{Sect.infty}
\setcounter{equation}{0}

Let us consider for any compact support function $u$ the quantities:
$$Q_p(u)=\frac{\displaystyle
\int_{\rr^d}\int_{\rrd} K(x,y)|u(x)-u(y)|^pdxdy}{\displaystyle\int
_{\rrd}|u|^p(x)dx}$$
and
$$Q_\infty(u)=\frac{ \| u(x) - u(y)\|_{L^\infty (x,y \in supp (u); \, K(x,y) >0 )} }{\| u
    \|_{L^\infty (supp (u))} } .$$
We set
\begin{equation}\label{eq.infty}
   \lambda_{1,\infty}(\rr^d)= \inf \{Q_\infty (u): u \in L^\infty(\rr^d), \mbox{ compactly supported} \}  .
\end{equation}
We remark that  taking $u\equiv 1$ we obtain $Q_\infty(u)=0$
and therefore in the definition of $ \lambda_{1,\infty} (\rr^d)$ we
have to consider functions $u$ that are compactly supported in
$\rr^d$.
\begin{lemma} \label{lemma.infty.1} The first eigenvalue for
$p=\infty$, \eqref{eq.infty}, is a bound for the limit of the
first eigenvalues as $p\to \infty$,
$$
\limsup_{p \to \infty } [\lambda_{1,p} (\rr^d)]^{1/p} \leq \lambda_{1,\infty}
(\rr^d).
$$
\end{lemma}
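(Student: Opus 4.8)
I would argue by testing $\lambda_{1,p}(\rr^d)$ against suitable competitors and letting $p\to\infty$, using that on a finite measure space the $L^p$–norm of a bounded function converges to its $L^\infty$–norm. For a compactly supported $u$ with $\|u\|_{L^\infty(\rr^d)}=1$, the definition of $\lambda_{1,p}(\rr^d)$ gives
$$
[\lambda_{1,p}(\rr^d)]^{1/p}\ \le\ Q_p(u)^{1/p}\ =\ \frac{\Big(\int_{\rr^d}\int_{\rr^d}K(x,y)\,|u(x)-u(y)|^p\,dx\,dy\Big)^{1/p}}{\Big(\int_{\rr^d}|u|^p\,dx\Big)^{1/p}}.
$$
Write $S=\mathrm{supp}(u)$. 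Since $S$ is bounded, Lebesgue measure on $S$ is finite, so $\|u\|_{L^p(\rr^d)}\to\|u\|_{L^\infty(\rr^d)}=1$; and the measure $K(x,y)\,dx\,dy$ restricted to $(S\times\rr^d)\cup(\rr^d\times S)$ is finite because $\int_{\rr^d}K(x,y)\,dx=\int_{\rr^d}K(x,y)\,dy=(1+|\det A|^{-1})\int_{\rr^d}\psi$, so the numerator converges to the essential supremum of $|u(x)-u(y)|$ over $\{(x,y):K(x,y)>0\}$. Hence $\limsup_{p\to\infty}[\lambda_{1,p}(\rr^d)]^{1/p}$ is bounded by this quantity for every such $u$; the whole point of the argument will be to keep that essential supremum comparable to $Q_\infty(u)$, i.e. to control the contribution of pairs with exactly one of $x,y$ in $S$.

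Concretely, given $\eps>0$, I would look for a compactly supported $u$ with $\|u\|_{L^\infty}=1$, $Q_\infty(u)<\lambda_{1,\infty}(\rr^d)+\eps$, and, in addition, $|u|\le\eps$ on the boundary layer $S_K:=\{x\in S:\ K(x,y)>0\ \text{for some}\ y\notin S\}$. Granting such a $u$, split the numerator of $Q_p(u)$ into the integral over $S\times S$, where the integrand is at most $K(x,y)Q_\infty(u)^p$ and hence contributes at most $Q_\infty(u)^p(1+|\det A|^{-1})(\int_{\rr^d}\psi)\,|S|$, and the integral over the complement, where one variable lies outside $S$ so that $|u(x)-u(y)|$ equals a value of $|u|$ on $S_K$ and the contribution is at most $2\eps^p(1+|\det A|^{-1})(\int_{\rr^d}\psi)\,|S_K|$. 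Using $(a+b)^{1/p}\le a^{1/p}+b^{1/p}$, $t^{1/p}\to 1$ for fixed $t>0$, and $\|u\|_{L^p(\rr^d)}\to1$, I obtain $\limsup_{p\to\infty}Q_p(u)^{1/p}\le Q_\infty(u)+\eps<\lambda_{1,\infty}(\rr^d)+2\eps$, and letting $\eps\to0$ proves the lemma.

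The step I expect to be the crux is the construction of a near–minimizer of $Q_\infty$ that is small on its own boundary layer $S_K$. The guiding principle is that $K(x,y)>0$ forces $y$ to be within distance one of $Ax$ (or $x$ within distance one of $Ay$), so a function with $Q_\infty(u)$ small must be nearly constant along the forward and backward $A$–orbits lying in $S$; a continuous such function is then forced to be small near the places where those orbits leave $S$, and these are exactly $S_K$. This is the same mechanism behind the minimizing sequences built for finite $p$ in Section~\ref{main.results}, which — after normalization and after dropping the geometric weights — can be reused here; alternatively one can multiply a given near–minimizer by a cut–off varying slowly along the $A$–orbits. Finally, I note that if one is willing to invoke Theorem~\ref{teo.cota.por.abajo} the statement is immediate, since $[\lambda_{1,p}(\rr^d)]^{1/p}=(2\int_{\rr^d}\psi)^{1/p}\,\big|1-|\det A|^{-1/p}\big|\to0\le\lambda_{1,\infty}(\rr^d)$; but the test–function argument above has the advantage of not relying on the explicit formula.
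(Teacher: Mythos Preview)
Your approach is essentially the same as the paper's: pick a compactly supported near--minimizer $u$ of $Q_\infty$, use it as a competitor for each $\lambda_{1,p}(\rr^d)$, and let $p\to\infty$ via the convergence of $L^p$--norms to $L^\infty$--norms on finite measure spaces. The paper's proof is literally three lines: it fixes such a $u$ and asserts $\lim_{p\to\infty}[Q_p(u)]^{1/p}=Q_\infty(u)$ without further comment.

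Where you go further is in worrying about pairs $(x,y)$ with exactly one point outside $S=\mathrm{supp}(u)$, since the numerator of $Q_p$ integrates over all of $\rr^d\times\rr^d$ while $Q_\infty$, as written, restricts to $x,y\in S$. Your remedy---choosing the near--minimizer to be $\le\eps$ on the $K$--boundary layer of $S$---is correct and mirrors exactly the step--function construction the paper carries out in Lemma~\ref{lema.2}; in effect you are importing that construction one lemma earlier. The paper simply does not raise this point, implicitly treating $Q_\infty(u)$ as the essential supremum of $|u(x)-u(y)|$ over all $(x,y)$ with $K(x,y)>0$ (with $u$ extended by zero), in which case the asserted limit is immediate. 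So your version is more careful on a point the paper glosses over, at the cost of a somewhat longer argument; and your closing observation that Theorem~\ref{teo.cota.por.abajo} already yields $[\lambda_{1,p}(\rr^d)]^{1/p}\to 0$ is a legitimate shortcut that bypasses the issue entirely.
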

\begin{proof} Fix $u\in L^\infty (\rr^d)$ compactly supported such
that
$$
Q_\infty (u) - \lambda_{1,\infty} (\rr^d) < \varepsilon.
$$
Now we observe that
$$
\lim_{p\to \infty} [Q_p (u)]^{1/p} =
\lim_{p\to \infty} \left( \frac{\displaystyle
\int_{\rr^d}\int_{\rrd} K(x,y)|u(x)-u(y)|^pdxdy}{\displaystyle\int
_{\rrd}|u|^p(x)dx}\right)^{1/p} = Q_\infty (u) <
\lambda_{1,\infty} (\rr^d) +\varepsilon.
$$
Hence
$$
\limsup_{p \to \infty} [\lambda_{1,p} (\rr^d)]^{1/p} \leq \lambda_{1,\infty}
(\rr^d).
$$
\end{proof}
\begin{lemma} \label{lema.2}
Let $K$ be such that its support satisfies
$$
{\rm supp} (K) \subset \{ |x-a(y)|\leq 1\} \cup \{ |y-a(x)|\leq 1\}.
$$
Then
$
\lambda_{1,\infty}
(\rr^d) = 0.
$
\end{lemma}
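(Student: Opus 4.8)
The plan is to prove $\lambda_{1,\infty}(\rr^d)=0$ by producing an explicit minimizing family. Since $Q_\infty(u)\ge 0$ for every admissible $u$, it suffices to construct, for each large $M$, a compactly supported $u_M$ with $Q_\infty(u_M)$ small. The mechanism I would exploit is that the support condition on $K$ forces any pair $x,y$ with $K(x,y)>0$ to have comparable Euclidean norms, with constants depending only on $A$: setting $\kappa=\max\{\|A\|,\|A^{-1}\|\}\ge 1$, if $|y-a(x)|\le 1$ then $\kappa^{-1}|x|-1\le|y|\le\kappa|x|+1$, and symmetrically if $|x-a(y)|\le 1$. Consequently a test function that decays slowly on a \emph{logarithmic} radial scale will have small oscillation across every pair linked by $K$. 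A linearly decaying radial plateau would not work, since $\big||x|-|y|\big|$ can be comparable to the diameter of the support of $u_M$, whereas on the logarithmic scale the relevant increments are bounded by a constant while the height of the plateau tends to infinity.

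Concretely, for $M>e$ I would take
$$
u_M(x)=\Big(1-\tfrac{\max\{\ln|x|,0\}}{\ln M}\Big)^+,
$$
so that $u_M\equiv 1$ on $\overline{B_1}$, $u_M(x)=1-\ln|x|/\ln M$ for $1\le|x|\le M$, and $u_M\equiv 0$ for $|x|\ge M$. Then $u_M$ is continuous, nonnegative and compactly supported with ${\rm supp}\,u_M=\overline{B_M}$ and $\|u_M\|_{L^\infty({\rm supp}\,u_M)}=1$. The heart of the argument is the pointwise bound
$$
|u_M(x)-u_M(y)|\le\frac{\ln(2\kappa)}{\ln M}\qquad\text{for all }x,y\in\overline{B_M}\text{ with }K(x,y)>0,
$$
from which $Q_\infty(u_M)\le\ln(2\kappa)/\ln M\to 0$, and hence $\lambda_{1,\infty}(\rr^d)=0$, follow at once.

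To establish that bound I would argue by cases on the sizes of $|x|$ and $|y|$, using only the two-sided norm estimate above (the case $|x-a(y)|\le 1$ is identical after swapping $A$ and $A^{-1}$, since $\kappa$ is unchanged). If $|x|,|y|\le 1$ the difference vanishes. If exactly one of them is $\le 1$, the other is $\le 2\kappa$ and $|u_M(x)-u_M(y)|$ equals $\ln(\cdot)/\ln M$ of that larger value, which is at most $\ln(2\kappa)/\ln M$. In the remaining case $1\le|x|,|y|\le M$ one has $|u_M(x)-u_M(y)|=\big|\ln(|x|/|y|)\big|/\ln M$, and the two-sided estimate together with $|x|,|y|\ge 1$ gives both $|x|/|y|\le 2\kappa$ and $|y|/|x|\le 2\kappa$, so $\big|\ln(|x|/|y|)\big|\le\ln(2\kappa)$. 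I expect this case analysis — specifically, keeping the ratio estimate uniform for points near the origin — to be the only point requiring any care; everything else is routine.

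Finally, I note that the lemma also admits a one-line proof: for $u=\chi_{B_1}$ the function $u$ equals $1$ a.e.\ on its support, so the $L^\infty$-oscillation of $u$ over $\{x,y\in{\rm supp}\,u:\ K(x,y)>0\}$ vanishes and $Q_\infty(u)=0$. I would nonetheless present the construction above, since it is stable under perturbations of $K$ and makes transparent why the stated hypothesis on ${\rm supp}(K)$ is the natural one.
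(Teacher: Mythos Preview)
Your logarithmic construction $u_M(x)=(1-\max\{\ln|x|,0\}/\ln M)^+$ is correct and takes a genuinely different route from the paper. The paper builds a \emph{step function} equal to $1,1-\epsilon,1-2\epsilon,\dots$ on a nested family of annuli, choosing the outer radii $R_a^j$ iteratively so that $K$ cannot link two non-adjacent levels; you instead use a \emph{continuous} radial function decaying on the logarithmic scale, which is the natural scale here since the support hypothesis forces $|y|/|x|\in[(2\kappa)^{-1},2\kappa]$ once $|x|,|y|\ge1$. Your argument is cleaner in that it avoids the iterative choice of the radii (which the paper only sketches, and where incidentally only the inequality $|x-a(y)|>1$ is verified, not $|y-a(x)|>1$); the paper's construction, on the other hand, needs no estimates at all once the radii are fixed.

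One caveat about the ``one-line proof'' with $u=\chi_{B_1}$. You are right that under the paper's \emph{literal} definition of $Q_\infty$ (supremum over pairs $x,y$ both in the support of $u$ with $K(x,y)>0$) this gives $Q_\infty(\chi_{B_1})=0$. But that reading is inconsistent with the preceding lemma, where the identity $\lim_{p\to\infty}[Q_p(u)]^{1/p}=Q_\infty(u)$ is invoked: for $u=\chi_{B_1}$ the left-hand side equals $1$, not $0$. The intended numerator is evidently the essential supremum over \emph{all} $(x,y)$ with $K(x,y)>0$ (reading $u$ as $0$ off its support), and the paper's own annulus construction is tailored to that reading. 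Under it your $\chi_{B_1}$ shortcut fails, but your main construction still works: you only need to add one case to the analysis, namely $|x|\le M<|y|$ (or symmetrically), where the same two-sided estimate gives $|x|>M/(2\kappa)$ for $M\ge2\kappa$, hence $|u_M(x)-u_M(y)|=u_M(x)\le\ln(2\kappa)/\ln M$.

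A minor slip: the two constraints $|y-a(x)|\le1$ and $|x-a(y)|\le1$ are interchanged by swapping $x$ and $y$, not $A$ and $A^{-1}$.
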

\begin{remark}
For kernels defined by \eqref{forma.nucleo}, we have assumed that the function $\psi$ is supported in the unit ball, so the hypothesis of the lemma holds. If the support of $\psi$ is any compact set in the ball with radius $R$, the result remains true replacing $1$ by $R$.
\end{remark}
\begin{proof} Given $\epsilon >0$, we just have to construct a function $u_\epsilon$,
compactly supported such that
$$
 \frac{ \| u_\epsilon(x) - u_\epsilon(y)\|_{L^\infty (x,y \in supp (u_\epsilon); \, K(x,y) >0 )} }
 {\| u_\epsilon
    \|_{L^\infty (supp (u_\epsilon))} } \leq \epsilon.
$$
To this end let us start with
$$
u_\epsilon (x) =1, \qquad \mbox{in } |x|\leq 1,
$$
Next, we let
$$
u_\epsilon (x) = 1- \epsilon, \qquad \mbox{in } 1<|x|\leq R_a^1
$$
where $R_a^1$ is such that $|x-a(y)| >1$ if $|x|\leq 1$ and $|y|
\geq R_a^1$. In this way if we let $|x|\leq 1$ the points $|y|
\geq R_a^1$ are such that $(x,y) \not\in supp(K)$, hence for
$|x|\leq 1$,
$$
\| u_\epsilon(x) - u_\epsilon(y)\|_{L^\infty (y ; \, (x,y) \in \textrm{supp} (u_\epsilon) \textrm{ and } \, K(x,y) >0
)} \leq \epsilon.
$$
Now, we continue with
$$
u_\epsilon (x) = 1-2\epsilon \qquad \mbox{in } R_a^1 < |x| \leq
R_a^2
$$
where $R_a^2$ is such that $|x-a(y)| >1$ if $|x|\leq R_a^1$ and
$|y|
\geq R_a^2$. Analogously as before we get
$$
\| u_\epsilon(x) - u_\epsilon(y)\|_{L^\infty (y ; \, (x,y) \in \textrm{supp} (u_\epsilon) \textrm{ and } \, K(x,y) >0
)} \leq \epsilon,
$$
for $|x| \leq R_a^1$.

Iterating this procedure a finite number of times, $N=
[1/\epsilon]+1$ we get a compactly supported function, whose
support is included in $|x|\leq R_a^N$. Moreover, the $L^\infty-$norm is one and
$$
\| u_\epsilon(x) - u_\epsilon(y)\|_{L^\infty (x,y \in supp (u_\epsilon); \, K(x,y) >0
)} \leq \epsilon.
$$
The proof is now complete.
\end{proof}
As an immediate consequence of the previous results we get Theorem \ref{lim.p.0.intro}.
\begin{proof}[Proof of  Theorem \ref{lim.p.0.intro}]
We just observe that
$$
0\leq \liminf_{p \to \infty } [\lambda_{1,p} (\rr^d)]^{1/p}\leq
\limsup_{p \to \infty } [\lambda_{1,p} (\rr^d)]^{1/p}
\leq
\lambda_{1,\infty} (\rr^d)=0.
$$
and the proof is finished
\end{proof}

\medskip

 {\bf Acknowledgements.}

L. I. Ignat is partially supported by grants PN II-RU-TE 4/2010  and PCCE-55/2008
of the Romanian National Authority for
Scientific Research, CNCS--UEFISCDI, MTM2011-29306-C02-00, MICINN,
Spain and ERC Advanced Grant FP7-246775 NUMERIWAVES.

D. Pinasco is partially supported by grants ANPCyT PICT 2011-0738 and CONICET PIP 0624.

J. D. Rossi and A. San Antolin are partially supported by
DGICYT grant PB94-0153 MICINN, Spain.

\bigskip



\begin{thebibliography}{XX}

\bibitem{AMRT} {\sc F. Andreu, J. M. Mazon, J.D. Rossi, J. Toledo},
{\em The Neumann problem for nonlocal nonlinear diffusion
equations}, J. Evol. Eqns. {\bf 8(1)} (2008), 189--215.

\bibitem{AMRT2} {\sc F. Andreu, J. M. Mazon, J.D. Rossi, J. Toledo},
{\em A nonlocal $p$-Laplacian evolution equation with Neumann
boundary conditions}, J. Math. Pures Appl. {\bf 90(2)}, (2008),
201--227.


\bibitem{AMTR3} F.~Andreu,  J.~M. Maz\'{o}n, J.~D. Rossi and
J.~Toledo. {\it The limit as $p\to \infty$ in a nonlocal
$p-$Laplacian evolution equation. A nonlocal approximation of a
model for sandpiles.} Calc. Var. PDE. {\bf 35(3)}, (2009),
279--316.


\bibitem{AMTR4} F.~Andreu,  J.~M. Maz\'{o}n, J.~D. Rossi and
J.~Toledo. {\it A  nonlocal  $p-$Lapla-cian  evolution equation
with non homogeneous Dirichlet boundary  conditions.} SIAM J.
Math. Anal. {\bf 40} (2009), 1815--1851.



\bibitem{libro} {\sc F. Andreu, J. M. Maz{\'o}n, J. D. Rossi, J. Toledo}.
{Nonlocal Diffusion Problems.} Amer. Math. Soc. Mathematical
Surveys and Monographs 2010. Vol. 165.

\bibitem{BN} {\sc G. Bachman, L. Narici}, {\em Functional analysis},
Dover, New York, 2000.

\bibitem{BCC} {\sc P. Bates, X. Chen, A. Chmaj}, {\em Heteroclinic
solutions of a van der Waals model with indefinite nonlocal
interactions}, Calc. Var. {\bf 24} (2005), 261--281.

\bibitem{BFRW} {\sc P. Bates, P. Fife, X. Ren, X. Wang}, {\em
Traveling waves in a convolution model for phase transitions},
Arch. Rat. Mech. Anal. {\bf 138} (1997), 105--136.

\bibitem{CCR} {\sc E. Chasseigne, M. Chaves, J.D. Rossi}, {\em
Asymptotic behavior for nonlocal diffusion equations}, J. Math.
Pures Appl. {\bf 86} (2006), 271--291.


\bibitem{CR1} {\sc A. Chmaj, X. Ren}, {\em Homoclinic solutions of
an integral equation: existence and stability}, J. Diff. Eqns.
{\bf 155} (1999), 17--43.

\bibitem{CCEM} {\sc C. Cort{\'a}zar, J. Coville, M. Elgueta, S.
Mart\'{\i}nez}, {\em A non local inhomogeneous dispersal process},
J. Diff. Eqns. {\bf 241} (2007), 332--358.

\bibitem{CER} {\sc C. Cort{\'a}zar, M. Elgueta, J.D. Rossi}, {\em A
nonlocal diffusion equation whose solutions develop a free
boundary}, Ann. Henri Poincar{\'e} {\bf 6} (2005), 269--281.


\bibitem{CERW1} {\sc C. Cort{\'a}zar, M. Elgueta, J.D. Rossi, N. Wolanski},
{\em Boundary fluxes for nonlocal diffusion}, J. Diff. Eqns. {\bf
234} (2007), 360--390.

\bibitem{CERW2} {\sc C. Cort{\'a}zar, M. Elgueta, J.D. Rossi, N. Wolanski},
{\em How to approximate the heat equation with Neumann boundary
conditions by nonlocal diffusion problems}, Arch. Rat. Mech. Anal.
{\bf 187} (2008) 137--156.

\bibitem{Co:10} {\sc J. Coville}, {\em On a simple criterion for the existence of a principal eigenfunction of some nonlocal operators}, J. Differential Equations 249 (2010), no. 11, 2921–-2953.

\bibitem{Co1} {\sc J. Coville}, {\em On uniqueness and monotonicity of
solutions on non-local reaction diffusion equations}, Ann. Mat.
Pura Appl. {\bf 185} (2006), 461--485.

\bibitem{CD2} {\sc J. Coville, L. Dupaigne}, {\em On a nonlocal
equation arising in population dynamics}, Proc. Roy. Soc.
Edinburgh {\bf 137} (2007), 1--29.

\bibitem{Du} {\sc Q. Du, M. Gunzburger, R.Lehoucq and K. Zhou.}
{\it A nonlocal vector calculus, nonlocal volume-constrained
problems, and nonlocal balance laws.} Preprint.



\bibitem{F} {\sc P. Fife}, {\em Some nonclassical trends in
parabolic and parabolic-like evolutions}, in ``Trends in nonlinear
analysis", pp. 153--191, Springer-Verlag, Berlin, 2003.

\bibitem{GR2} {\sc J. Garc\'{\i}a-Meli{\'a}n, J.D. Rossi}, {\em
Maximum and antimaximum principles for some nonlocal diffusion
operators}, Nonlinear Analysis TM\&A. {\bf 71}, (2009),
6116--6121.

\bibitem{GMR-autov} {\sc J. Garc\'{\i}a-Meli{\'a}n, J.D. Rossi}, {\em
On the principal eigenvalue of some nonlocal diffusion problems.}
J. Differential Equations. {\bf 246(1)}, (2009), 21--38.

\bibitem{GT} {\sc D. Gilbarg, N. Trudinger}, Elliptic partial differential equations of second order.
Classics in Mathematics. Springer-Verlag, Berlin, 2001.

\bibitem{HMMV} {\sc V. Hutson, S. Mart\'{\i}nez, K. Mischaikow,
G. T. Vickers}, {\em The evolution of dispersal}, J. Math. Biol.
{\bf 47} (2003), 483--517.

\bibitem{IR2} {\sc L. I. Ignat, J. D. Rossi}, {\em A nonlocal
convection-diffusion equation}, J. Funct. Anal. {\bf 251} (2007),
399--437.

\bibitem{IR3} {\sc L. I. Ignat, J. D. Rossi}, {\it Decay estimates for nonlocal problems via energy
methods.} J. Math. Pures Appl. {\bf 92(2)}, (2009), 163--187.

\bibitem{nosotros.JDE} {\sc L. I. Ignat, J. D. Rossi and A. San
Antolin}, {\it Lower and upper bounds for the first eigenvalue of
nonlocal diffusion problems in the whole space}. {J. Differential Equations}, {\bf 12}, (2011), 6429--6447.
To appear in J.
Differential Eauqtions.



\bibitem{KR} {\sc M. G. Krein, M. A. Rutman}, {\em Linear operators
leaving invariant a cone in a Banach space}, Amer. Math. Soc.
Transl. {\bf 10} (1962), 199--325.

\bibitem{PLPS} {\sc M. L. Parks, R. B. Lehoucq, S. Plimpton, and S. Silling.}
{\it Implementing peridynamics within a molecular dynamics code},
Computer Physics Comm., {179}, (2008), 777--783.

\bibitem{Sh:10} {\sc W. Shen and A. Zhang.} {\it Spreading speeds for monostable equations with nonlocal dispersal in space periodic habitats}, J. Differential Equations 249 (2010), no. 4, 747–-795.



\bibitem{Sill} {\sc S. A. Silling}. {\it
Reformulation of Elasticity Theory for Discontinuities and
Long-Range Forces}. J. Mech. Phys. Solids, {48}, (2000), 175�-209.

\bibitem{SL} {\sc S. A. Silling and R. B. Lehoucq.}
{\it Convergence of Peridynamics to Classical Elasticity Theory}.
J. Elasticity, {93} (2008), 13--37.



\end{thebibliography}
\end{document}